\numberwithin{equation}{section}
\newtheorem{satz}{Theorem}[section]
\newtheorem{proposition}[satz]{Proposition}
\newtheorem{lemma}[satz]{Lemma}
\newtheorem{korollar}[satz]{Corollary}
\newtheorem{assumptions}[satz]{Assumptions}
\newtheorem*{question*}{Question}
\newtheorem*{questions*}{Questions}
\newtheorem{bemerkung}[satz]{Remark}
\newtheorem*{bemerkung*}{Remark}
\theoremstyle{remark}
\newtheorem*{beispiel*}{Example}
\DeclarePairedDelimiter	{\abs}		{\lvert}	{\rvert}
\DeclarePairedDelimiter	{\norm}		{\lVert}	{\rVert}
\DeclarePairedDelimiter	{\skal}		{\langle}	{\rangle}
\DeclareMathOperator	{\IE}		{\mathbb{E}}
\DeclareMathOperator	{\IP}		{\mathbb{P}}
\DeclareMathOperator	{\IN}		{\mathbb{N}}
\DeclareMathOperator	{\Id}		{Id}
\newcommand				{\dpartial}	{\mathfrak{d}}
\newcommand				{\bnorm}[1]	{\bigg\lVert #1 \bigg\rVert}
\renewcommand{\epsilon}{\varepsilon}
\def\E{{\mathbb E}}
\def\P{{\mathbb P}}
\def\R{{\mathbb R}}
\def\N{{\mathbb N}}
\def\S{{\mathbb S}}
\def\XX{\mathbb{X}}
\def\Var{{\mathrm{Var}}}
\def\Ent{{\mathrm{Ent}}}
\def\vol{{\mathrm{vol}}}
\def\cs{\circ}
\def\sign{{\mathrm{sign}}}
\def\LSq{{\mathrm{LS}_q}}
\def\pist{\pi_\mathrm{St}}
\def\pis{\pi_\mathrm{sym}}
\def\rank{\mathrm{rank}}
\def\Rns{\mathbb{R}^{n \times n}_\mathrm{sym}}
\newcommand{\PP}{\mathbb{P}}
\def\cN{\mathcal{N}}
\def\sfN{{\sf N}}
\title[Higher order concentration]{Some notes on higher order concentration of measure}
\author{Holger Sambale}
\address{Holger Sambale, Faculty of Mathematics, Bielefeld University, Germany}
\email{hsambale@math.uni-bielefeld.de}
\begin{document}

\begin{abstract}
	This survey-type paper provides a common framework for a larger number of higher order concentration results (i.\,e., concentration results for non-Lipschitz functions which have bounded derivatives of higher order) in the spirit of \cite{BGS19}. Situations inlude measures satisfying various functional inequalities (log-Sobolev, Poincar\'{e}, $\mathrm{LS}_q$), uniform and cone measures on spheres with respect to the Euclidean as well as $\ell_p^n$-norms, Stiefel and Grassmann manifolds as well as discrete situations. In particular in the latter case, some open questions and remarks are stated.
\end{abstract}

\subjclass{60B20, 60E15, 60F10}
\keywords{Concentration of measure, logarithmic Sobolev inequality}
\thanks{Research  supported by GRF--SFB 1283/2 2021 -- 317210226}

\date{\today}

\maketitle

\section{Introduction}

\subsection{Introductory notes}
By now a classical topic in probability theory, the origins of the concentration of measure phenomenon reach back to the 1970s. First aspects were described by V.\ D.\ Milman \cite{Mil71} in geometry and by V.\,N.\ Sudakov \cite{Sud78} in the study of the asymptotic distributional behaviour of weighted linear sums of independent or uncorrelated random variables $X_j$. In the sequel, concentration of measure has been subject to intense research with important contributions by researchers like M.\ Talagrand, M.\ Ledoux and S.\,G.\ Bobkov and others. Various overviews are available, in particular the monographs \cite{MS86,Led01,BLM13,RS14,vHa16}. One reason for the ongoing interest in concentration of measure is its richness of applications, emerging as a powerful tool in the study of high-dimensional problems in the spectral theory of random operators and random matrices as well as high-dimensional non-parametric statistics, computer science and information theory.

To sketch a typical situation, given a measure $\mu$, say, on $\R^n$, one is interested in the fluctuations of certain classes of functions around their mean value or median, hence bounding quantities like
\[
\mu\Big(\Big|f - \int f d\mu\Big| \ge t\Big)
\]
for $t \ge 0$. Here, $f$ could be a linear function $f(x) = \frac{1}{\sqrt{n}} \sum_{i=1}^n x_i$ or, more generally, a Lipschitz function. In many cases like the standard Gaussian measure, product measures with bounded support or the uniform distribution on the unit sphere, one arrives at sub-Gaussian tail bounds. Obviously, these results depend on the underlying measure, e.\,g.\ (products of) the exponential distribution will lead to subexponential tails only. In any case, there is a certain philosophy of ``many'' independent or weakly dependent observations having only limited influence on the general behaviour, so that for a vector of $n$ observations, the deviation of Lipschitz functionals from their mean is typically of order $\mathcal{O}(n^{-1/2})$.

While Lipschitz functions represent the ``classical'' situation in concentration of measure, there are many situations of interest in which the functions under consideration are not Lipschitz or have Lipschitz constants which grow as the dimension increases even after a renormalization which asymptotically stabilizes the variance. Among the simplest examples are polynomial-type functions. Here, the boundedness of the gradient has to be replaced by more elaborate conditions on higher order derivatives (up to some order $d$, which for polynomials equals their degree). Moreover, we cannot have sub-Gaussian tail decay anymore. This is already obvious if we consider the product of two independent standard normal random variables, which leads to subexponential tails. We refer to this topic as \emph{higher order concentration}.

The earliest higher order concentration inequalities date back to the late 1960s. In \cite{Bon68,Bon70,Nel73}, the growth of $L^p$ norms and hypercontractivity estimates of polynomial-type functions of Rademacher or Gaussian random variables, respectively, were studied. The question of estimating the growth of $L^p$ norms of multilinear (i.\,e., linear in every variable) polynomials in Gaussian random variables was addressed in \cite{Bor84,AG93,Lat06}. On the more applied side, in the context of Erd{\"o}s--R{\'e}nyi graphs and the triangle problem concentration inequalities for polynomial functions were studied in papers such as \cite{KV00}.

More recently, multilevel concentration inequalities have been proven in \cite{Ada06,Wol13,AW15} for many classes of functions, including $U$-statistics of independent random variables, functions of random vectors satisfying Sobolev-type inequalities and polynomials in sub-Gaussian random variables, respectively. Here, by a multilevel concentration inequality we refer to an inequality of the form
\[
\mu\Big( \Big|f - \int fd\mu\Big| \ge t \Big) \le 2 \exp\Big( - \frac{1}{C} \min_{j \in \mathcal{I}} f_j(t) \Big)
\]
for some finite index set $\mathcal{I}$. Thus, the tails expose different decay properties in certain regimes of $[0,\infty)$. In many classical situations, we have $f_j(t) = (t/C_j)^{2/j}$ for some constant $C_j$ which typically depends on the derivatives of $f$ from order $1$ to some order $d$, in which case we also speak of a $d$-th order concentration inequality.

For illustration, let us again consider the case $d=2$ and study quadratic forms of independent random variables with sub-Gaussian tail decay. In this case, the famous Hanson--Wright inequality (cf.\ \cite{HW71,Wri73}) states that if $X=(X_1, \ldots,X_n)$ is a random vector in $\R^n$ with independent components $X_i$ such that $\E X_i=0$ and $\lVert X_i \rVert_{\Psi_2} := \inf \{c > 0 \colon \E \exp(X_i^2/c^2) \le 2\} \le K$ and $A$ is an $n \times n$ matrix, then
\begin{equation}\label{eq:HWI}
\P(|X^TAX - \E X^TAX| \ge t) \le 2\exp\Big\{-c\min \Big(\Big(\frac{t}{K^2\lVert A \rVert_\mathrm{HS}}\Big)^2, \frac{t}{K^2\lVert A \rVert_\mathrm{op}}\Big)\Big\}
\end{equation}
for any $t \ge 0$. Often, one further assumes $\mathrm{Var}(X_i)=1$, in which case $\E X^TAX = \mathrm{tr}(A)$. For a modern proof, see \cite{RV13}, and for various developments, cf.\ \cite{HKZ12,VW15,Ada15,ALM18}.

The Hanson--Wright inequality hence yields that quadratic forms in independent sub-Gaussian random variables admit two levels of tail decay: for small values of $t$, a sub-Gaussian one which scales with the Hilbert--Schmidt norm of $A$, and for large values of $t$ a subexponential one which scales with the operator norm of $A$. In particular, this makes our previous observation about the tails of products of independent sub-Gaussian random variables more precise. Analogues of the Hanson-Wright inequality for sub-exponential random vectors can be found in \cite{GSS21b, Sam23}.

In some sense generalizing these observations, classical higher order concentration results like \cite{Lat06} or \cite{AW15}, continued e.\,g.\ in \cite{ABW17} or \cite{GSS21b}, yield refined (in some cases both-sided) tail bounds which are given in terms of a large family of tensor-type norms, typically for functions of polynomial type. By contrast, in \cite{BGS19} a somewhat different methodology has been developed which does not lead to the same exactness in terms of the tail bounds but turns out to provide a flexible and readily applicable framework which can be adapted to a multitude of different situations and functionals. Subsequently, the same type of arguments has been used in \cite{GSS19,GSS21a,GS23,GS24} to study various settings of higher order concentration. All these results are hence based upon a common general scheme of arguments which so far has not been formulated in full generality however.

Against this background, the principal aim of this note is to provide a framework which generalizes the stream of research building upon \cite{BGS19}. In particular, it is possible to regard the corresponding results as applications of a kind of general theorem. It turns out to be useful to distinguish between two types of situations which may roughly be labeled as continuous and discrete. Indeed, if the space $\mathcal{X}$ is discrete, a slightly more refined approach is necessary. Especially in these situations, there is also a number of open questions which we will formulate along the lines.

\subsection{Notation and conventions}

Let us fix some general notation and conventions. Our default notation for the measurable spaces involved is $(\mathcal{X}, \mathcal{B})$, typically together with a probability measure $\mu$. In many cases, $\mathcal{X}$ is equipped with a metric $d$, in which case $\mathcal{B}$ is the Borel $\sigma$-algebra. We also write $(\mathcal{X},d,\mu)$ for the respective metric probability space. Often times, the measurable space is of product structure $(\mathcal{X}, \mathcal{B}) = \otimes_{i = 1}^n (\mathcal{X}_i, \mathcal{B}_i)$, whereas $\mu$ is not necessarily a product measure.

In many situations of interest, $\mathcal{X}$ is a normed space. Typically, norms on $\mathcal{X}$ will be denoted by $\lvert \cdot \rvert_\diamondsuit$, where $\diamondsuit$ represents some specification. For instance, if $\mathcal{X} = \R^n$, we may have $\diamondsuit = p$ for some $p \ge 1$, where $\lvert \cdot \rvert_p$ denotes the $\ell^p$ norms:
\[
\lvert x \rvert_p := \begin{cases}
(\sum_{i=1}^n |x_i|^p)^{1/p}, & p \in [1, \infty)\\
\max_{i = 1,\ldots, n} |x_i|, & p = \infty.
\end{cases}
\]
For $p=2$, the notation $\lvert \cdot \rvert$ shall always indicate the Euclidean norm on $\R^n$.

Especially in the Euclidean case $\mathcal{X} = \R^n$, two types of norms for $j$-tensors $T \in \R^{n^j}$ are of particular interest: the Hilbert--Schmidt (Frobenius, Euclidean) norm $\abs{T}_\mathrm{HS} \coloneqq (\sum_{i_1, \ldots, i_j} A_{i_1 \ldots i_j}^2)^{1/2}$ and the operator norm
\begin{align*}				
	\abs{T}_\mathrm{\mathrm{op}} \coloneqq \sup_{\substack{v^1, \ldots, v^j \in \R^n \\ \abs{v^\ell} \le 1}} \skal{T, v^1 \cdots v^j} = \sup_{\substack{v^1, \ldots, v^j \\ \abs{v^\ell} \le 1}} \sum_{i_1,\ldots,i_j} T_{i_1 \ldots i_j} v^1_{i_1} \cdots v^j_{i_j},
\end{align*}
using the outer product $(v^1 \cdots v^j)_{i_1 \ldots i_j} = \prod_{\ell = 1}^j v^\ell_{i_\ell}$.

Let $f \colon \mathcal{X} \to \R$ be a measurable function on $(\mathcal{X}, \mathcal{B}, \mu)$. For any $r > 0$, we write
\[
\lVert f \rVert_r := \Big( \int \lvert f \rvert^r d\mu \Big)^{1/r}
\]
(as opposed to the $\ell^p$ norms $\lvert x \rvert_p$). Moreover, if $f$ is integrable, we write
\[
\E f = \E_\mu f = \int f d\mu.
\]

In the case of higher order derivatives, we will often take $L^r$ norms of norms $\lvert \cdot \rvert_\diamondsuit$ of some random tensor $T$. A concrete example is the tensor of $j$-th order derivatives $T = \nabla^{(j)} g$, equipped with the operator or the Hilbert--Schmidt norm. Here, we frequently use the short-hand notation
\[
\lVert T \rVert_{\diamondsuit, r} := \lVert \lvert T \rvert_\diamondsuit \rVert_r
\]
for any $r > 0$.

\subsection{Overview}

In Section \ref{sec:GenRes}, a first general result is presented and proven which especially encompasses many non-discrete situations. Corresponding examples are given in Section \ref{sec:Ex1}. As these examples are based on previous work, proofs will only be sketched in general. In Section \ref{sec:GenResDisc}, yet another general result will be stated which in particular addresses discrete-type situations. Examples are provided in Section \ref{sec:Ex2}, again only with sketches of the proofs but including some more extensive discussion.

\section{A general higher order concentration result}\label{sec:GenRes}

Our first aim is to state a result which generalizes many previously known situations, in particular measures on $\R^n$ which satisfy suitable functional (log-Sobolev, Poincar\'{e}, $\mathrm{LS}_q$) inequalities or classical probability measures on spheres and related manifolds. To this end, let us introduce the notion of \emph{difference operators}. Let $\mathcal{F}$ be some class of measurable functions $f \colon \mathcal{X} \to \R$ such that for any $a > 0$ and any $b \in \R$, we have $af+b \in \mathcal{F}$. Then, a difference operator is an operator $\Gamma \colon \mathcal{F} \to \R$ which satisfies
\begin{equation}\label{eq:DiffOp}
\Gamma(af+b) = a\Gamma(f)
\end{equation}
for any $f \in \mathcal{F}$, any $a > 0$ and any $b \in \R$. For instance, if $\mathcal{X}$ is a metric space, $\mathcal{F}$ is typically the set of all locally Lipschitz functions, and $\Gamma = |\nabla f|$ is (a generalization of) the norm of the gradient.

Two ingredients form the core of our results: first, the control of the $L^r$ norms of the functions under consideration and second, a recursion scheme which allows us to move on to higher orders. Against this background, we now state two fundamental assumptions on which our results rely.

\begin{assumptions}\label{ass:Set1}
\begin{enumerate}
\item There exist $p > 0$, $r_0 > 1$, and $L, \sigma > 0$ such that for any function $g \in \mathcal{F}$, we have
\begin{equation*}
	\lVert g - \E g \rVert_r \le L\sigma r^{1/p} \lVert \Gamma g \rVert_r
\end{equation*}
for all $r \ge r_0$.
\item There exist ``higher order'' difference operators $\Gamma^{(j)}(g) \in [0,\infty)$, $j = 1, \ldots, d$, such that for any $g \in \mathcal{F}$,
\begin{equation*}
	\Gamma (\Gamma^{(j)}(g)) \le \Gamma^{(j+1)}(g)
\end{equation*}
for any $j = 1, \ldots, d-1$.
\end{enumerate}
\end{assumptions}

A typical choice in (1) is $p=r_0=2$, where $p=2$ indicates sub-Gaussian tail decay. The reason for introducing both $L$ and $\sigma$ instead of a single factor is that in many cases, we have some ``purely numerical'' part $L$ and a factor of $\sigma$ which contains some kind of information about the measure $\mu$ (for instance, a log-Sobolev constant which may depend on the dimension). Ultimately, it is a matter of convention.

In (2), the classical case is $\Gamma^{(j)}(g) = \lvert g^{(j)} \rvert_\mathrm{op}$, the operator norm of the tensor of $j$-fold partial derivatives. More generally, we often consider a sort of tensor of higher order ``derivatives'' $\nabla^{(j)} g \in \R^{n^j}$, of which we then take some norm $\lvert \cdot \rvert_\diamondsuit$, i.\,e., $\Gamma^{(j)} g = \lvert \nabla^{(j)} g \rvert_\diamondsuit$.

Let us now formulate our first main result. Here we prefer to speak of ``suitable'' functions instead of further specifications for the sake of simplicity. In the examples section, $f$ will typically be chosen to be $\mathcal{C}^d$-smooth.

\begin{satz}\label{thm:HigherOrder}
    Given Assumptions \ref{ass:Set1}, let $f\colon \mathcal{X} \to \R$ be a suitable function with $\E f = 0$.
    \begin{enumerate}
        \item Assuming that $\lVert \Gamma^{(j)} f \rVert_1 \le \sigma^{d-j}$ for all $j=1,\ldots,d-1$ and $\lVert \Gamma^{(d)} f \rVert_\infty \le 1$, it holds that
        \[
        \int \exp\Big\{\frac{c_{p,d,r_0,L}}{\sigma^p} |f|^{p/d} \Big\} d\mu \le 2,
        \]
        where a possible choice of the constant $c_{p,d,r_0,L}$ is given by
        \[
        c_{p,d,r_0,L} = \frac{(r_0^{1/p}-1)^p}{2e\max(L^{1/d}, L)^pr_0\max(r_0,p/d)}.
        \]
        \item For any $t\ge 0$, it holds that
        \[
        \mu(|f| \ge t)\le
        2 \exp\Big\{- \frac{C_{p,r_0,L}}{d^p\sigma^p} \min\Big(\min_{j=1, \ldots,d-1} \Big(\frac{t}{\lVert \Gamma^{(j)} f \rVert_1}\Big)^{p/j}, \Big(\frac{t}{\lVert \Gamma^{(d)} f \rVert_\infty}\Big)^{p/d}\Big)\Big\},
        \]
    where a possible choice of the constant $C_{p,r_0,L}$ is given by
    \[
    C_{p,r_0,L} = \frac{\log(2)}{r_0(Le)^p}.
    \]
    \end{enumerate}
\end{satz}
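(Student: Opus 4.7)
The plan is to first establish, by induction on $j$, the recursive moment estimate
\[
\lVert f \rVert_r \le (L\sigma r^{1/p})^d \lVert \Gamma^{(d)} f \rVert_r + \sum_{j=1}^{d-1} (L\sigma r^{1/p})^j \lVert \Gamma^{(j)} f \rVert_1
\]
for every $r \ge r_0$, and then to derive both assertions from it. The base case is Assumption \ref{ass:Set1}(1) applied to $f$ itself, using $\E f = 0$ together with the natural convention $\Gamma = \Gamma^{(1)}$ built into the setup. For the inductive step I apply Assumption \ref{ass:Set1}(1) to $g = \Gamma^{(j)} f$, combine with the trivial bound $\lVert g \rVert_r \le \lVert g - \E g \rVert_r + \lVert g \rVert_1$, and then invoke $\Gamma(\Gamma^{(j)} f) \le \Gamma^{(j+1)} f$ from Assumption \ref{ass:Set1}(2) to advance one derivative order.

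For part (2), I apply Markov's inequality $\mu(|f| \ge t) \le (\lVert f \rVert_r/t)^r$ and choose $r \ge r_0$ so that each of the $d$ summands in the moment bound is at most $t/(de)$; this makes $\lVert f \rVert_r \le t/e$ and hence $\mu(|f| \ge t) \le e^{-r}$. The $j$-th summand contributes the constraint $r \le (t/(de N_j))^{p/j}/(L\sigma)^p$, where $N_j = \lVert \Gamma^{(j)} f \rVert_1$ for $j < d$ and $N_d = \lVert \Gamma^{(d)} f \rVert_\infty$. Taking $r$ to be the minimum of these upper bounds and uniformly estimating $(de)^{p/j} \le (de)^p$ (valid since $de \ge 1$ and $p/j \le p$) produces exactly the shape of the exponent claimed in the statement. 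For small $t$ where the optimal $r$ would drop below $r_0$, the inequality is secured by the trivial bound $\mu(|f| \ge t) \le 1$, absorbed into the prefactor $2$ at the cost of the $\log(2)/r_0$ piece of $C_{p,r_0,L}$.

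For part (1), under the normalization the moment bound collapses to $\lVert f \rVert_r \le \sigma^d \sum_{j=1}^d (L r^{1/p})^j$, and the elementary inequality $\sum_{j=1}^d a^j \le d \max(a, a^d)$ applied with $a = L r^{1/p}$ (using $r \ge r_0 > 1$ and the identity $\max(L, L^d) = \max(L^{1/d}, L)^d$) yields $\lVert f \rVert_r \le d \max(L^{1/d}, L)^d \sigma^d r^{d/p}$ for $r \ge r_0$. I then expand
\[
\int \exp(c|f|^{p/d}) \, d\mu = \sum_{k=0}^\infty \frac{c^k}{k!} \lVert f \rVert_{kp/d}^{kp/d}
\]
and split the series at $k_0 = \lceil r_0 d/p \rceil$. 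For $k < k_0$, monotonicity of $L^r$-norms gives $\lVert f \rVert_{kp/d}^{kp/d} \le \lVert f \rVert_{r_0}^{kp/d}$, bounding the head by an exponential-series expression in $c M^{p/d} r_0$; for $k \ge k_0$, inserting the moment bound and applying the Stirling estimate $k^k/k! \le e^k$ turns the tail into a geometric series with ratio proportional to $c M^{p/d}(p/d) e$. Choosing $c$ small enough that both pieces combined are bounded by $1$ yields $\int \exp(c|f|^{p/d}) d\mu \le 2$.

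The main obstacle is the bookkeeping in part (1): the factor $(r_0^{1/p} - 1)^p$ in $c_{p,d,r_0,L}$ indicates that one must leverage the gap $r_0 > 1$ carefully when calibrating the Taylor split, and the $\max(r_0, p/d)$ denominator reflects the trade-off between the head (driven by $r_0$) and the tail (driven by $p/d$). A loose analysis already recovers the correct qualitative shape $c \asymp \sigma^{-p} \max(L^{1/d}, L)^{-p}$, but extracting the sharp prefactor requires balancing these two contributions against each other. The moment recursion and the Markov optimization for part (2) are, by comparison, essentially routine once the iteration pattern has been identified.
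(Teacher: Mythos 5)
Your moment recursion and your treatment of part (2) are essentially identical to the paper's argument: the same iteration of Assumption 2.1(1) applied to $\Gamma^{(j)}f$ combined with the triangle inequality and Assumption 2.1(2), followed by the Chebyshev bound $\mu(|f|\ge e\lVert f\rVert_r)\le e^{-r}$ with $r$ optimized so that each summand is of order $t/(de)$, and the small-$t$ regime absorbed into the prefactor $2$ at the cost of $\log(2)/r_0$ (the paper packages this as Lemma 2.4(2) plus a constant-adjustment inequality, but the computation is the same and your constant $C_{p,r_0,L}$ comes out right).

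The one place where your argument falls short of the statement is part (1). The theorem asserts the \emph{specific} constant $c_{p,d,r_0,L}=\frac{(r_0^{1/p}-1)^p}{2e\max(L^{1/d},L)^p r_0\max(r_0,p/d)}$, and the factor $(r_0^{1/p}-1)^p/r_0=\bigl(\tfrac{r_0^{1/p}-1}{r_0^{1/p}}\bigr)^p$ does not come from ``calibrating the Taylor split,'' as you suggest, but from bounding the sum in the moment estimate by a geometric series: for $r\ge r_0>1$,
\[
\sum_{j=1}^{d} r^{j/p}\;\le\;\frac{1}{1-r^{-1/p}}\,r^{d/p}\;\le\;\frac{r_0^{1/p}}{r_0^{1/p}-1}\,r^{d/p}.
\]
Your replacement $\sum_{j=1}^d a^j\le d\max(a,a^d)$ with $a=Lr^{1/p}$ is valid but cruder: it yields $\lVert f\rVert_r\le d\max(L,L^d)\sigma^d r^{d/p}$ and hence $\lVert\,|f|^{p/d}\rVert_m\le \gamma' m$ with $\gamma'$ carrying an extra factor $d^{p/d}$ in place of $r_0/(r_0^{1/p}-1)^p$, so the constant you obtain is not the one claimed. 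Relatedly, your split of the exponential series at $k_0=\lceil r_0 d/p\rceil$ is unnecessary: once one has the uniform bound $\lVert\,|f|^{p/d}\rVert_m\le\gamma m$ for \emph{all} $m\ge1$ (the case $pm/d<r_0$ being handled by monotonicity of $L^r$-norms and accounting for the term $\max(r_0,p/d)$ in $\gamma$), a single series estimate $\sum_m(c\gamma m)^m/m!\le\sum_m(c\gamma e)^m$ with $c=1/(2\gamma e)$ closes the argument. With the geometric-series bound in place of your $d\max(a,a^d)$ step, the rest of your plan goes through and delivers the stated constant.
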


Obviously, Theorem \ref{thm:HigherOrder} (1) yields weaker tail estimates than part (2). On the other hand, an interesting feature of (1) is that the constants involved only depend on $d$ in a very weak sense (for instance, not at all whenever $p/d \le r_0$ and $L \ge 1$), in contrast to the factor of $1/d^p$ in (2) and in even sharper contrast to all the results following \cite{Lat06} since the latter typically involve symmetrization and decoupling inequalities which lead to a very rough $d$-dependence.

To check higher order concentration results in concrete situations, it is natural to consider the easiest non-trivial case $d=2$ and ask for Hanson--Wright type results. Temporarily writing $f(x) := x^TAx$ as a function on the Euclidean space equipped with the usual gradient, note that $A$ is the Hessian of $f$ (up to constants). Therefore, to get back \eqref{eq:HWI} in the framework of this note we need a version of Theorem \ref{thm:HigherOrder} specialized to $d=2$ where $\lVert \Gamma^{(1)} f \rVert_1$ is replaced by a ``second order'' quantity like the Hilbert--Schmidt norm of $f''$ in the present example.

In order to avoid technicalities, we slightly specialize the situation. Taking up the notation of Assumptions \ref{ass:Set1}, we restrict ourselves to the case where $\Gamma^{(k)} g = \lvert \nabla^{(k)} g \rvert_\diamondsuit$ for some sort of (tensor of) $k$-th order derivatives $\nabla^{(k)} g$ and a suitable (typically operator-type) norm $\diamondsuit$. Moreover, we shall also consider another set of difference operators $\lvert \nabla^{(k)} g \rvert_\square$, to be thought of as Hilbert--Schmidt type norms of tensors of derivatives. In particular, $\nabla^{(1)}g$ is a vector, and we write $\E \nabla^{(1)}g$ for its componentwise integral with respect to $\mu$.

\begin{korollar}\label{cor:HWI}
Given Assumptions \ref{ass:Set1} for $\Gamma^{(k)} g = \lvert \nabla^{(k)} g \rvert_\diamondsuit$, further assume that
\begin{equation}\label{eq:ass3}
\lVert \nabla^{(1)} g \rVert_{\diamondsuit, r_0} \le L\sigma \lVert \nabla^{(2)} g \rVert_{\square, r_0}
\end{equation}
for any $g \in \mathcal{F}$ with $\E\nabla^{(1)}g = 0$ for all $i$ and the quantities $r_0$, $L$, $\sigma$ from Assumptions \ref{ass:Set1}. Then, for any $t\ge 0$ and any suitable function $f$ such that $\E f = 0$, it holds that
        \[
        \mu(|f| \ge t)\le
        2 \exp\Big\{- C_{p,r_0,L} \min\Big(\Big(\frac{t}{L\sigma^2 \lVert \nabla^{(2)} f \rVert_{\square, r_0}}\Big)^p, \Big(\frac{t}{\sigma^2\lVert \nabla^{(2)} f \rVert_{\diamondsuit,\infty}}\Big)^{p/2}\Big)\Big\},
        \]
    where a possible choice of the constant $C_{p,r_0,L}$ is given by
    \[
    C_{p,r_0,L} = \frac{\log(2)}{r_0(2Le)^p}.
    \]
\end{korollar}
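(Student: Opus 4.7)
The plan is to specialise Theorem \ref{thm:HigherOrder}(2) to $d = 2$ and then use \eqref{eq:ass3} to replace the first-order quantity $\lVert\nabla^{(1)}f\rVert_{\diamondsuit,1}$ appearing in the resulting bound by the Hilbert--Schmidt-type second-order quantity $L\sigma\lVert\nabla^{(2)}f\rVert_{\square,r_0}$; the second minimand and the stated constant will then appear automatically.

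Specialising Theorem \ref{thm:HigherOrder}(2) to $d = 2$ and pulling the prefactors $1/d^p = 1/2^p$ and $1/\sigma^p$ inside the denominators of the minimum yields
\[
\mu(|f|\ge t)\le 2\exp\Big\{-\frac{\log 2}{r_0(2Le)^p}\min\Big(\Big(\frac{t}{\sigma\lVert\nabla^{(1)}f\rVert_{\diamondsuit,1}}\Big)^p,\Big(\frac{t}{\sigma^2\lVert\nabla^{(2)}f\rVert_{\diamondsuit,\infty}}\Big)^{p/2}\Big)\Big\}.
\]
The second minimand is already of the claimed form, and the factor $2$ in $2Le$ is accounted for by the $d^p = 2^p$ absorption, so no further constant is lost.

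To control the first-order quantity, I would apply \eqref{eq:ass3} to the centered-gradient modification $g(x) := f(x) - \langle\E\nabla^{(1)}f, x\rangle$, which by construction satisfies $\E\nabla^{(1)}g = 0$ and $\nabla^{(2)}g = \nabla^{(2)}f$. This produces $\lVert\nabla^{(1)}f - \E\nabla^{(1)}f\rVert_{\diamondsuit,r_0}\le L\sigma\lVert\nabla^{(2)}f\rVert_{\square,r_0}$, and monotonicity of $L^r$ norms transfers the bound to the $L^1$ norm on the left.

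The main obstacle is the passage from $\nabla^{(1)}f - \E\nabla^{(1)}f$ back to $\nabla^{(1)}f$ itself. In the situations the Corollary is intended for (centered Gaussian measures with polynomial-type $f$, spherically symmetric measures, and the like), $\E\nabla^{(1)}f = 0$ holds automatically and the bound follows directly. In general one decomposes $f = (g-\E g) + \ell$, where $\ell(x) = \langle\E\nabla^{(1)}f, x\rangle - \E\langle\E\nabla^{(1)}f, \cdot\rangle$ is an affine function with mean zero; one applies the above argument to $g - \E g$, and estimates the tail of $\ell$ directly from Assumption \ref{ass:Set1}(1) via $\lVert\ell\rVert_r \le L\sigma r^{1/p}|\E\nabla^{(1)}f|_\diamondsuit$. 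The two tails are then combined via a union bound, and in typical examples $|\E\nabla^{(1)}f|_\diamondsuit$ is dominated by $\sigma\lVert\nabla^{(2)}f\rVert_{\square,r_0}$ so that the affine contribution does not worsen the rate.
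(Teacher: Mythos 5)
Your argument is correct and is essentially the paper's (implicit) proof: specialize the moment bound \eqref{eq:iteratedBd} behind Theorem \ref{thm:HigherOrder}(2) to $d=2$, upgrade the first-order term via $\lVert\nabla^{(1)}f\rVert_{\diamondsuit,1}\le\lVert\nabla^{(1)}f\rVert_{\diamondsuit,r_0}\le L\sigma\lVert\nabla^{(2)}f\rVert_{\square,r_0}$ from \eqref{eq:ass3}, and the constants then come out exactly as stated. Your final paragraph is unnecessary and cannot work in full generality: the corollary is only meant to be applied to $f$ with $\E\nabla^{(1)}f=0$ (as in Propositions \ref{prop:HWILSU} and \ref{prop:HigherOrderSphereIntr}(3)), since for a non-constant affine $f$ both minimands are infinite and the stated bound is simply false, so no union-bound patch involving $|\E\nabla^{(1)}f|_\diamondsuit$ can recover the claimed inequality without that hypothesis.
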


Typically, the additional assumption \eqref{eq:ass3} readily follows from a Poincar\'{e}-type inequality. Note also that in principle, \eqref{eq:ass3} could be reformulated by replacing $L$ and $\sigma$ by, say, $L'$ and $\sigma'$. Corollary \ref{cor:HWI} can be easily adapted to this situation. We do not pursue this approach for clarity of presentation and in order to avoid transforming this note into a list of cases of various generality.

Let us now directly turn to the proof of Theorem \ref{thm:HigherOrder}. To this end, we need the following elementary lemma.

\begin{lemma}\label{lem:Conc}
	\begin{enumerate}
		\item Assuming that $\lVert g \rVert_m \le \gamma m$ for any $m \in \N$ and some constant $\gamma > 0$, it holds that
		\[
		\int \exp\Big\{ \frac{|g|}{2\gamma e}\Big\} d\mu \le 2.
		\]
		\item Assume that for some finite index set $\mathcal{J}$ and every $j \in \mathcal{J}$ there are constants $C_j > 0$ and real numbers $p_j > 0$, $r_0 \ge 1$ such that for any $r \ge r_0$
		\[
		\lVert g \rVert_r \le \sum_{j \in \mathcal{J}} (C_j r)^{1/p_j}.
		\]
		Then, we have
		\[
		\mu(|g| \ge t) \le 2 \exp\Big\{-\frac{\log(2)}{r_0(|\mathcal{J}|e)^{\max_j p_j}} \min_{j \in \mathcal{J}} \frac{t^{p_j}}{C_j}\Big\}
		\]
		for any $t \ge 0$.
	\end{enumerate}
\end{lemma}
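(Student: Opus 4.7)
Both parts are standard transitions from $L^r$-moment control to exponential/tail bounds; the plan is to expand and integrate term by term in part (1), and to optimize a Markov-type bound in part (2).

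\textbf{Part (1).} I would expand the exponential as a power series and use Fubini to exchange the sum with the integral, getting
\[
\int \exp\Big\{\frac{|g|}{2\gamma e}\Big\} d\mu = \sum_{k=0}^\infty \frac{\lVert g \rVert_k^k}{k!(2\gamma e)^k}.
\]
Plugging in $\lVert g \rVert_k \le \gamma k$ (and $\int 1\,d\mu = 1$ for $k=0$), this is bounded by $1 + \sum_{k \ge 1} k^k/(k!(2e)^k)$. The key elementary estimate is the Stirling-type inequality $k! \ge (k/e)^k$, which gives $k^k/k! \le e^k$, reducing the tail sum to the geometric series $\sum_{k \ge 1} 2^{-k} = 1$. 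This yields the desired bound of $2$.

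\textbf{Part (2).} I would start from Markov's inequality applied to $|g|^r$ for a free parameter $r \ge r_0$, giving $\mu(|g| \ge t) \le (\lVert g \rVert_r/t)^r$, and then majorize the sum of $|\mathcal{J}|$ terms by $|\mathcal{J}|$ times the maximum:
\[
\mu(|g| \ge t) \le \Big(\frac{|\mathcal{J}|\,\max_{j \in \mathcal{J}} (C_j r)^{1/p_j}}{t}\Big)^r.
\]
Next, to kill the base by a factor $1/e$, I would choose $r$ so that $(C_j r)^{1/p_j} \le t/(|\mathcal{J}|e)$ for every $j$ simultaneously, i.e.
\[
r = \min_{j \in \mathcal{J}} \frac{t^{p_j}}{C_j(|\mathcal{J}|e)^{p_j}} \ge \frac{1}{(|\mathcal{J}|e)^{\max_j p_j}}\min_{j \in \mathcal{J}} \frac{t^{p_j}}{C_j} =: q,
\]
where the last inequality uses $|\mathcal{J}|e \ge 1$ to pull out the common factor with the worst exponent $\max_j p_j$.

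\textbf{Case split and constants.} The only subtlety is the admissibility constraint $r \ge r_0$ and the factor $\log(2)/r_0$ in the claimed bound. If $q \ge r_0$, then taking $r = q$ in the Markov estimate gives $\mu(|g| \ge t) \le e^{-q}$; since $r_0 \ge 1 > \log 2$, we have $\log(2)/r_0 \le 1$, so $e^{-q} \le e^{-\log(2)q/r_0} \le 2 e^{-\log(2)q/r_0}$, which is the claimed bound. If $q < r_0$, then $\log(2)q/r_0 < \log 2$, so the right-hand side of the claim exceeds $2 e^{-\log 2} = 1 \ge \mu(|g| \ge t)$ trivially. The main obstacle here is really just bookkeeping: matching the precise constant $\log(2)/r_0$ without introducing spurious dependence on $|\mathcal{J}|$ or $\max_j p_j$ beyond what is claimed, which is handled cleanly by the above case distinction.
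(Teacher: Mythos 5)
Your proposal is correct and follows essentially the same route as the paper: part (1) is the identical series expansion with $k! \ge (k/e)^k$, and part (2) is the same Chebyshev-with-$L^r$-norms argument with $r$ chosen proportional to $\min_j t^{p_j}/C_j$ and a case split on whether that choice exceeds $r_0$. The only cosmetic difference is that the paper converts the prefactor $e^{r_0}$ into the $\log(2)/r_0$ loss in the exponent via a cited generic inequality, whereas you handle it directly through the two-case argument.
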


\begin{proof}
	To see (1), note that  setting $c = 1/(2 \gamma e)$ and using $m! \ge (\frac{m}{e})^m$, we have
	\[
	\int \exp(c|g|) d\mu = 1 + \sum_{m=1}^{\infty} c^m \frac{\int |g|^m d\mu}{m!} 
	\le 1 + \sum_{m=1}^{\infty} (c \gamma)^m \frac{m^m}{m!}
	\le 1 + \sum_{m=1}^{\infty} (c \gamma e)^m = 2.
	\]
	
	To see (2), first note that by Chebyshev's inequality we have for any $r \ge 1$
	\begin{equation}  \label{eq:ChebyshevWithLp}
		\mu (|g| \ge e \lVert g \rVert_r) \le \exp(-r).
	\end{equation}
	Now consider the function
	\[
	\eta_g(t) := \min_{j\in\mathcal{J}} \frac{t^{p_j}}{C_j}.
	\]
	If we assume $\eta_g(t) \ge r_0$, we can estimate $e \lVert g \rVert_{\eta_g(t)} \le e \sum_{j \in \mathcal{J}} t = |\mathcal{J}|et$, so that an application of equation \eqref{eq:ChebyshevWithLp} to $r = \eta_g(t)$ yields
	\[
	\mu(|g| \ge |\mathcal{J}|et) \le \mu(|g| \ge e\lVert g \rVert_{\eta_g(t)}) \le \exp\left( - \eta_g(t) \right).
	\]
	Combining it with the trivial estimate $\mu(\cdot) \le 1$ (in the case $\eta_f(t) \le r_0$) gives
	\[
	\mu(|g| \ge |\mathcal{J}|et) \le e^{r_0} \exp(-\eta_g(t)).
	\]
	Replacing $t$ by $t/(|\mathcal{J}|e)$ we obtain
	\[
	\mu(|g| \ge t) \le e^{r_0} \exp \left( - \frac{1}{(|\mathcal{J}|e)^{\max_jp_j}} \eta_g(t) \right).
	\]
	
	Now recall that for any two constants $C_1 > C_2 > 1$ we have for all $r \ge 0$ and $C > 0$
	\begin{equation}\label{eqn:constantAdjustment} 
	C_1 \exp(-r/C) \le C_2 \exp\Big(-\frac{\log(C_2)}{C\log(C_1)} r\Big)
	\end{equation}
	whenever the left hand side is smaller or equal to $1$ (cf.\ e.\,g.\ \cite[Eq.\ (3.1)]{SS21}). In particular, the claim now follows by applying \eqref{eqn:constantAdjustment} with $C_1 = e^{r_0}$ and $C_2 = 2$.
\end{proof}

\begin{proof}[Proof of Theorem \ref{thm:HigherOrder}]
	For any $j = 1, \ldots, d-1$, setting $g :=  \Gamma^{(k)} f$ and combining Assumptions \ref{ass:Set1} (1)\&(2) yields
	\[
	\lVert \Gamma^{(j)} f - \E\Gamma^{(j)} f \rVert_r \le L\sigma r^{1/p} \lVert \Gamma^{(j+1)} f \rVert_r
	\]
	and hence, by triangle inequality,
	\[
	\lVert \Gamma^{(j)} f \rVert_r \le \lVert \Gamma^{(j)} f \rVert_1 + L\sigma r^{1/p} \lVert \Gamma^{(j+1)} f \rVert_r
	\]
	for any $r \ge r_0$. This yields
	\begin{equation}\label{eq:iteratedBd}
		\lVert f \rVert_r \le \sum_{j=1}^{d-1} L^j\sigma^j r^{j/p} \lVert \Gamma^{(j)} f \rVert_1 + L^d\sigma^d r^{d/p} \lVert \Gamma^{(d)} f \rVert_\infty
	\end{equation}
	for any $r \ge r_0$.
	
	To see (1), we assume $L \ge 1$ in the sequel (the case $L < 1$ follows in the same way by replacing $L^d$ by $L$). Plugging the assumptions into \eqref{eq:iteratedBd} leads to
	    \begin{align*}
	    \lVert f \rVert_r &\le \sigma^d L^d \sum_{j=1}^d r^{j/p}
	    \le \sigma^d L^d \frac{1}{1-r^{-1/p}} r^{d/p}\\
	    &\le \sigma^d L^d \frac{r_0^{1/p}}{r_0^{1/p}-1} r^{d/p}
	    = \Big(\sigma^p L^p \frac{r_0}{(r_0^{1/p}-1)^p} r\Big)^{d/p}
	    \end{align*}
	    for any $r \ge r_0$. If $r \le r_0$, we obtain
	    \begin{equation*}
	    \lVert f \rVert_r \le \lVert f \rVert_{r_0} \le \Big(\sigma^p L^p \frac{r_0}{(r_0^{1/p}-1)^p} r_0\Big)^{d/p}.
	    \end{equation*}
	    Hence, for any $m\ge 1$ it holds that
	    \[
	    \lVert |f|^{p/d} \rVert_m = \lVert f \rVert_{pm/d}^{p/d} \le \gamma m,\qquad \gamma = \sigma^p L^p \frac{r_0}{(r_0^{1/p}-1)^p} \max\Big(r_0, \frac{p}{d}\Big).
	    \]
	    Now it follows by Lemma \ref{lem:Conc} (1) applied to $g=|f|^{p/d}$ that $\int \exp(c'|f|^{p/d}) d\mu \le 2$ for $c'=1/(2\gamma e)$, which proves (1).
	
	    To show (2), we apply Lemma \ref{lem:Conc} (2) with $\mathcal{J} = \{1, \ldots, d\}$ and $p_j=p/j$ to \eqref{eq:iteratedBd}. This leads to a bound of $\mu(|f| \ge t)$ by
	    \[
	    2 \exp\Big\{- \frac{\log(2)C'}{r_0d^pe^p} \min\Big(\min_{k=1, \ldots,d-1} \Big(\frac{t}{\lVert \Gamma^{(k)} f \rVert_1}\Big)^{p/k}, \Big(\frac{t}{\lVert \Gamma^{(d)} f \rVert_\infty}\Big)^{p/d}\Big)\Big\},
	    \]
	    where
	    \[
	    C'= \frac{1}{L^p\sigma^p}.
	    \]
	    This finishes the proof.
\end{proof}

\section{Examples}\label{sec:Ex1}

In this section, we provide numerous examples where Theorem \ref{thm:HigherOrder} applies. Most of them involve the following generalization of the norm of the gradient. Let $(\mathcal{X},d)$ be a metric space, and let $f \colon \mathcal{X} \to \R$ be locally Lipschitz. Then, for any $x \in \mathcal{X}$ which is no isolated point,
\begin{equation}
	\label{eq:genmod}
	\lvert \nabla^\ast f(x) \rvert := \limsup_{d(x,y) \to 0^+} \frac{|f(x)-f(y)|}{d(x,y)}
\end{equation}
is called the generalized modulus of the gradient. If $x\in\mathcal{X}$ is an isolated point, we formally set $\lvert \nabla^\ast f(x) \rvert:=0$. The generalized modulus of the gradient preserves many identities from calculus in form of inequalities, such as a ``chain rule inequality''
\begin{equation}
	\label{eq:chainrule}
	\lvert \nabla^\ast (u(f)) \rvert \le |u'(f)|\lvert \nabla^\ast(f) \rvert,
\end{equation}
where $u$ is a smooth function on $\R$ (or, more generally, a locally Lipschitz function).

For metric probability spaces, \eqref{eq:genmod} is typically the a priori choice for the difference operator $\Gamma$. In particular, the standard example for \eqref{eq:genmod} is $\mathcal{X}=\R^n$ together with the Euclidean norm $|\cdot|$. In this case, if $f \colon \R^n \to \R$ is a differentiable function, $\lvert \nabla^\ast f \rvert = |\nabla f|$ is the Euclidean norm of the usual gradient. We therefore tend to write $|\nabla f|$ instead of $|\nabla^\ast f|$ permanently, keeping in mind that in the case of differentiability both notions agree and if not, the formal definition \eqref{eq:genmod} has to be used.

Similar observations hold for the more general case where $\mathcal{X} \equiv M$ is an embedded (Riemannian) submanifold of the Euclidean space $\R^n$. In this situation, one may introduce an intrinsic notion of derivatives as follows. We call a function $f \colon M \to \R$ differentiable if it can be extended to a differentiable function on some neighbourhood of $M$, i.\,e., there is some open set $U \subset \R^n$ and a differentiable function on $U$ which agrees with $f$ on $M$. For simplicity, we will denote the differentiable extension of $f$ by $f$ as well.

Denote by $T_xM$ the tangent space in $x \in M$, and let $P_x \colon \R^n \to T_xM$ the respective orthogonal projection. Then, the intrinsic gradient of $f$ at $x \in M$ is given by
\begin{equation}\label{eq:intgrad}
	\nabla_M f(x) := P_x \nabla f(x),
\end{equation}
where $\nabla f(x)$ is the usual Euclidean gradient of $f$ at $x$. This definition does not depend on the choice of the differentiable extension of $f$: indeed, if $g$ is another differentiable extension onto some neighbourhood of $M$, then $\nabla_M f(x) = \nabla_M g(x)$ for all $x \in M$. If $d$ is the Riemannian metric on $M$, then $\lvert \nabla_M f(x) \rvert$ equals the generalized modulus of the gradient of $f$ at $x \in M$ as defined in \eqref{eq:genmod}. In particular, we therefore mostly write $|\nabla_M f(x)|$ instead of $|\nabla^\ast f(x)|$ similarly as in the Euclidean case.

Note that by a different approach, we could also exclusively work on $M$ and avoid any extensions of the function $f$ to some neighbourhood. Here, we call a function $f \colon M \to \mathbb{R}$ differentiable at $x \in M$ if it admits a Taylor expansion
\begin{equation}\label{eq:defintrder}
	f(x') = f(x) + \langle v,x' - x\rangle + o\big(|x' - x|\big) \quad {\rm as} \ \ x' \rightarrow x, \ \ x' \in M
\end{equation}
with some $v \in \R^n$. The unique $v_0$ of smallest (Euclidean) length among all such $v$ is called the intrinsic derivative or gradient of $f$ at $x$, denoted $\nabla_M f(x)$.

To see that this definition of $\nabla_M f$ agrees with \eqref{eq:intgrad}, note that if $f$ is defined and smooth in an open neighbourhood of $M$, the Euclidean gradient $\nabla f(x)$ clearly satisfies \eqref{eq:defintrder}, and we may project it onto $T_x$, hence minimizing its length, which gives back our initial definition. Finding suitable expansions of the function $f$ is typically no problem, and since \eqref{eq:intgrad} is typically easier to deal with in concrete situations, we mostly work with the first approach.

\subsection{Logarithmic Sobolev inequalities}\label{sec:LSIs}

Perhaps the most classical situation in which Theorem \ref{thm:HigherOrder} applies is the case where $\mathcal{X} = \R^n$ is equipped with the Euclidean norm and $\mu$ satisfies a logarithmic Sobolev inequality with respect to the usual gradient (more precisely, the generalized norm of the gradient).

Let us introduce the notion of logarithmic Sobolev inequalities in a more general setting. We say that a probability measure $\mu$ on $(\mathcal{X},\mathcal{B})$ satisfies a logarithmic Sobolev inequality (log-Sobolev inequality, LSI) with respect to some difference operator $\Gamma$ if there is some $\sigma^2 > 0$ such that for every $f \in \mathcal{F}$ (where $\mathcal{F}$ is typically the class of all locally Lipschitz functions),
\begin{equation}\label{eq:genLSI}
	\mathrm{Ent}_\mu(f^2) \le 2\sigma^2 \int (\Gamma f)^2 d\mu,
\end{equation}
where $\mathrm{Ent}_\mu(f^2) := \int f^2\log(f^2) d\mu - (\int f^2 d\mu) \log(\int f^2 d\mu)$ denotes the entropy of $f^2$. A log-Sobolev inequality implies a Poincar\'{e}-type inequality with the same constant, i.\,e.,
\begin{equation}\label{eq:genPI}
	\mathrm{Var}_\mu(f) := \int f^2 d\mu - \int f^2 d\mu \le \sigma^2 \int (\Gamma f)^2 d\mu
\end{equation}
for every $f \in \mathcal{F}$.

In the present section, we hence assume that there exists $\sigma^2 > 0$ such that for every sufficiently smooth function $f \colon \R^n \to \R$,
\begin{equation}\label{eq:LSI}
	\mathrm{Ent}_\mu(f^2) \le 2\sigma^2 \int |\nabla f|^2 d\mu.
\end{equation}
The most classical example of a measure satisfying a log-Sobolev inequality is the standard Gaussian measure (on $\R^n$), where $\sigma^2=1$ (cf.\ \cite{Gro75}). Necessary and sufficient conditions for probability measures on $\R$ to satisfy a log-Sobolev inequality have been found in \cite{BG99}. Products of probability measures satisfying a log-Sobolev inequality (with constant $\sigma^2$) also satisfy a log-Sobolev inequality (with constant $\sigma^2$). Further examples (typically for measures on manifolds) will follow in the course of this section. For an overview, also cf.\ \cite[Ch.\ 7]{BCG23}.

It is standard knowledge that measures which satisfy a log-Sobolev inequality admit sub-Gaussian concentration for Lipschitz functions, i.\,e., for any $1$-Lipschitz function $f \colon \R^n \to \R$, it holds that
\[
\mu(|f-\E f| \ge t) \le 2\exp\Big(-\frac{t^2}{2\sigma^2}\Big)
\]
for every $t \ge 0$. By verifying Assumptions \ref{ass:Set1}, we may extend this result to higher orders, which leads to the following result. Here, given a $\mathcal{C}^d$-function $f \colon \R^n \to \R$, we denote its tensor of $j$-fold derivatives by $f^{(j)}$. In other words, its entries
\begin{equation*}
	f^{(j)}_{i_1 \ldots i_j}(x) = \partial_{i_1 \ldots i_j} f(x)
\end{equation*}
are given by the $j$-fold (continuous) partial derivatives of $f$ at $x \in \R^n$.

\begin{satz}\label{thm:HigherOrderLSI}
	Let $\mu$ be a probability measure on $\R^n$ which satisfies a log-Sobolev inequality \eqref{eq:LSI} with constant $\sigma^2 > 0$, and let $f\colon \R^n \to \R$ be a $\mathcal{C}^d$-smooth function with $\E f = 0$.
	\begin{enumerate}
		\item Assuming that $\lVert f^{(j)} \rVert_{\mathrm{op},1} \le \sigma^{d-j}$ for all $j=1,\ldots,d-1$ and $\lVert f^{(d)} \rVert_{\mathrm{op},\infty} \le 1$, it holds that
		\[
		\int \exp\Big\{\frac{c}{\sigma^2} |f|^{2/d} \Big\} d\mu \le 2,
		\]
		where we may choose $c=(\sqrt{2}-1)^2/(8e)$.
		\item For any $t\ge 0$, it holds that
		\[
		\mu(|f| \ge t)\le
		2 \exp\Big\{- \frac{C}{(d\sigma)^2} \min\Big(\min_{j=1, \ldots,d-1} \Big(\frac{t}{\lVert f^{(j)} \rVert_{\mathrm{op},1}}\Big)^{2/j}, \Big(\frac{t}{\lVert f^{(d)} \rVert_{\mathrm{op},\infty}}\Big)^{2/d}\Big)\Big\},
		\]
		where we may choose $C = \log(2)/(2e^2)$.
	\end{enumerate}
\end{satz}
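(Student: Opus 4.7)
The plan is to derive Theorem \ref{thm:HigherOrderLSI} as a direct corollary of Theorem \ref{thm:HigherOrder} by checking that Assumptions \ref{ass:Set1} hold with $\mathcal{X} = \R^n$, $\Gamma g = \lvert \nabla g \rvert$ (the generalized modulus of the gradient), $\Gamma^{(j)} g = \abs{g^{(j)}}_{\mathrm{op}}$, parameters $p = r_0 = 2$, and $L = 1$. Substituting these values into the formulas for $c_{p,d,r_0,L}$ and $C_{p,r_0,L}$ in Theorem \ref{thm:HigherOrder} produces exactly $(\sqrt{2}-1)^2/(8e)$ and $\log(2)/(2e^2)$ (using $\max(r_0, p/d) = 2$ for $d \ge 1$), so the constants in the statement are automatically reproduced.

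For Assumptions \ref{ass:Set1}(1), the task is to establish the Aida--Stroock-type moment bound
\[
\lVert g - \E g \rVert_r \le \sigma\sqrt{r}\, \lVert \lvert \nabla g \rvert \rVert_r, \qquad r \ge 2,
\]
for sufficiently smooth $g$. This is a classical consequence of \eqref{eq:LSI}: differentiating $F(r) := \lVert g - \E g \rVert_r^r$, applying the log-Sobolev inequality \eqref{eq:LSI} to $\lvert g - \E g \rvert^{r/2}$, and using the chain rule \eqref{eq:chainrule} together with H\"older's inequality yields a differential inequality whose integration gives the desired $L^r$ bound (see e.g.\ \cite[Ch.~5]{Led01}).

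For Assumptions \ref{ass:Set1}(2), the pointwise inequality to verify is
\[
\lvert \nabla (\abs{f^{(j)}}_{\mathrm{op}}) \rvert(x) \le \abs{f^{(j+1)}(x)}_{\mathrm{op}}, \qquad j = 1, \ldots, d-1.
\]
The idea is to use the defining supremum representation $\abs{f^{(j)}(x)}_{\mathrm{op}} = \sup_{\abs{v^\ell} \le 1} \skal{f^{(j)}(x), v^1 \cdots v^j}$. For each fixed unit tuple $(v^1,\ldots,v^j)$ the inner product is a smooth function of $x$ whose Euclidean gradient has $k$-th entry $\sum_{i_1,\ldots,i_j} f^{(j+1)}_{k,i_1,\ldots,i_j}(x) v^1_{i_1} \cdots v^j_{i_j}$; its Euclidean length is by definition of the operator norm bounded by $\abs{f^{(j+1)}(x)}_{\mathrm{op}}$, uniformly in $v^1,\ldots,v^j$. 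The standard estimate for the generalized modulus of a pointwise supremum of a family of Lipschitz functions then transfers this uniform bound to $\abs{f^{(j)}}_{\mathrm{op}}$ itself.

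The only genuine obstacle is the non-differentiability of the operator norm as a function of the underlying tensor field, which forces us to pass through the generalized modulus \eqref{eq:genmod} rather than arguing with classical derivatives; the supremum representation above makes this routine but needs to be spelled out. Once both parts of Assumptions \ref{ass:Set1} are in place, applying Theorem \ref{thm:HigherOrder} to $f$ delivers parts (1) and (2) of Theorem \ref{thm:HigherOrderLSI} simultaneously.
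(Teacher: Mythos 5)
Your proposal is correct and follows essentially the same route as the paper: verify Assumptions \ref{ass:Set1} with $p=r_0=2$, $L=1$ via the Aida--Stroock moment inequality \eqref{eq:AS} (plus the Poincar\'e inequality to absorb the $L^2$ term) and via the pointwise bound $|\nabla\,\lvert f^{(j)}\rvert_{\mathrm{op}}| \le \lvert f^{(j+1)}\rvert_{\mathrm{op}}$ of Lemma \ref{lem:itGradHessLSI}, then plug into Theorem \ref{thm:HigherOrder}. Your argument for the iterated-gradient step (uniform gradient bound over the supremum family, transferred to the generalized modulus) is just a reformulation of the paper's triangle-inequality-plus-Taylor proof, and your constant check is accurate.
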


Theorem \ref{thm:HigherOrderLSI} gives back Th.\ 1.9 and Cor.\ 1.11 from \cite{BGS19} up to constants and with the (minor) difference of involving $L^1$ norms $\lVert f^{(j)} \rVert_{\mathrm{op},1}$ instead of $L^2$ norms. The latter is a consequence of a slightly refined proof strategy which leads to a somewhat sharpened result, though in applications one may prefer to work with the sometimes more easily computable $L^2$ norms after all. More intricate bounds involving a family of tensor-type norms can be found in \cite[Th.\ 1.2]{AW15}.

By the same strategy, we also obtain a Hanson--Wright inequality in presence of a log-Sobolev inequality.

\begin{proposition}\label{prop:HWILSU}
    Let $\mu$ be a probability measure on $\R^n$ which satisfies a log-Sobolev inequality as well as $\int x_i d\mu = 0$ for any $i=1, \ldots,n$, and define $f\colon \R^n \to \R$ by $f(x) := x^TAx$, where $A=(a_{ij})_{ij}$ is a symmetric $n \times n$ matrix. Then,
    \[
    \mu(|f - \E f| \ge t) \le 2\exp\Big\{-c \min \Big(\Big(\frac{t}{\sigma^2\lvert A \rvert_{\mathrm{HS}}}\Big)^2, \frac{t}{\sigma^2\lvert A \rvert_{\mathrm{op}}}\Big)\Big\}
    \]
    for any $t \ge 0$.
\end{proposition}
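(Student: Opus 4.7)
The plan is to derive Proposition \ref{prop:HWILSU} directly from Corollary \ref{cor:HWI} specialized to the Euclidean setting with $p=r_0=2$, difference operator $\Gamma g = |\nabla g|$, higher-order operators $\Gamma^{(j)} g = |\nabla^{(j)} g|_{\mathrm{op}}$, and auxiliary ``Hilbert--Schmidt'' norms $|\nabla^{(j)} g|_{\square} := |\nabla^{(j)} g|_{\mathrm{HS}}$. The whole argument then reduces to verifying the hypotheses of the corollary in this concrete instance and plugging in the derivatives of a quadratic form.

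First I would verify Assumptions \ref{ass:Set1}. For part (1), the classical consequence of LSI due to Aida--Masuda--Shigekawa (obtainable via the Herbst argument, and a standard ingredient in the framework underlying Theorem \ref{thm:HigherOrderLSI}) yields
\[
\lVert g - \E g \rVert_r \le L\sigma \sqrt{r}\, \lVert |\nabla g| \rVert_r \qquad (r \ge 2)
\]
for a numerical constant $L$; this is precisely (1) with $p = r_0 = 2$. For part (2), the standard inequality $|\nabla |\nabla^{(j)} g|_{\mathrm{op}}| \le |\nabla^{(j+1)} g|_{\mathrm{op}}$ follows from writing $|T|_{\mathrm{op}}$ as a supremum over unit vectors and invoking the chain-rule inequality \eqref{eq:chainrule} for the generalized modulus of the gradient.

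Next I would check the Hanson--Wright-specific hypothesis \eqref{eq:ass3}. Since a log-Sobolev inequality implies a Poincar\'e inequality with the same constant, applying it componentwise to any $g \in \mathcal{F}$ with $\E \nabla g = 0$ gives
\[
\lVert |\nabla g| \rVert_2^2 = \sum_{i=1}^n \mathrm{Var}_\mu(\partial_i g) \le \sigma^2 \sum_{i,j=1}^n \E (\partial_{ij} g)^2 = \sigma^2 \lVert |\nabla^{(2)} g|_{\mathrm{HS}} \rVert_2^2,
\]
so that \eqref{eq:ass3} holds at $r_0 = 2$ (after possibly enlarging $L$ to reconcile the two occurrences).

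Finally I would apply Corollary \ref{cor:HWI} to the centered polynomial $h(x) := f(x) - \E f = x^T A x - \E(x^T A x)$. Its gradient $\nabla h(x) = 2 A x$ has mean $2A \E x = 0$ by the assumption $\int x_i d\mu = 0$, and the Hessian is the constant tensor $\nabla^{(2)} h = 2A$, yielding $\lVert \nabla^{(2)} h\rVert_{\mathrm{HS}, 2} = 2 |A|_{\mathrm{HS}}$ and $\lVert \nabla^{(2)} h\rVert_{\mathrm{op}, \infty} = 2 |A|_{\mathrm{op}}$. Inserting these into the conclusion of Corollary \ref{cor:HWI} with $p=2$ and absorbing numerical constants into $c$ produces exactly the stated tail bound. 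The only step requiring genuine work is the $L^r$ moment bound under LSI invoked in (1); the remaining verifications are direct Poincar\'e/chain-rule bookkeeping, which is precisely the advantage of having Corollary \ref{cor:HWI} ready to apply.
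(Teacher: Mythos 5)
Your proposal is correct and follows essentially the same route as the paper: apply Corollary \ref{cor:HWI} with $\diamondsuit=\mathrm{op}$ and $\square=\mathrm{HS}$, verify \eqref{eq:ass3} by applying the Poincar\'{e} inequality to each (centered) partial derivative $\partial_i f = 2\sum_j a_{ij}x_j$, and plug in $f''=2A$. The only cosmetic difference is that you re-verify Assumptions \ref{ass:Set1} (which the paper has already established in the course of proving Theorem \ref{thm:HigherOrderLSI}, via \eqref{eq:1fLSI} and Lemma \ref{lem:itGradHessLSI} --- the latter by a Taylor-expansion/triangle-inequality argument rather than the chain rule), so the paper's proof reduces to the \eqref{eq:ass3} check alone.
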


Assuming $A$ to be symmetric is a matter of convenience, and the result can easily be extended to non-symmetric matrices $A$. Noting that the sub-Gaussian norms of $x_i$ under $\mu$ are of order $\sigma$, Proposition \ref{prop:HWILSU} can thus be regarded as a Hanson--Wright inequality in presence of a log-Sobolev inequality. Of course, this excludes every situation where the components of $X$ are independent but do not satisfy a log-Sobolev inequality, but on the other hand Proposition \ref{prop:HWILSU} also allows for dependencies between the coordinates, which is not part of the original Hanson--Wright inequality.

To show Theorem \ref{thm:HigherOrderLSI}, we check Assumptions \ref{ass:Set1}. For (1), recall that a classical result by Aida and Stroock \cite{AS94} yields
\begin{equation}\label{eq:AS}
	\lVert g \rVert_r^2 \le \lVert g \rVert_2^2 + \sigma^2(r-2) \lVert \nabla g \rVert_r^2
\end{equation}
for any $r \ge 2$. Mostly for the sake of later comparison, let us briefly recall its proof.

\begin{proof}[Proof of \eqref{eq:AS}]
We may assume $g$ to be bounded, in which case the squares of
the $L^r(\mu)$-norms of $g$ have derivatives
\begin{equation}\label{eq:Abl}
	\frac{d}{dr}\, \lVert g \rVert_r^2 = 
	\frac{2}{r^2}\, \lVert g \rVert_r^{2-r}\, \mathrm{Ent}_\mu(|g|^r).
\end{equation}
We apply this identity to the function $u = |g|^{r/2}$. By the chain rule inequality  \eqref{eq:chainrule}, $|\nabla u|^2 \le \frac{r^2}{4}\, |g|^{r-2}\, |\nabla g|^2$.
Hence, by H\"older's inequality,
$$
\int |\nabla u|^2\, d\mu \le 
\frac{r^2}{4} \Big(\int|g|^r\,d\mu\Big)^{\frac{r-2}{r}}
\Big(\int |\nabla g|^r\,d\mu\Big)^{\frac{2}{r}} = 
\frac{r^2}{4} \lVert g \rVert_r^{r-2}\lVert \nabla g \rVert_r^2.
$$
Applying \eqref{eq:LSI} to the function $u$, we therefore obtain
$$
\mathrm{Ent}_\mu (|g|^r) = \text{Ent}_\mu (u^2) \le 
2 \sigma^2 \int |\nabla u|^2\, d\mu \le 
\frac{r^2 \sigma^2}{2}\, \lVert g \rVert_r^{r-2}\, \lVert \nabla g \rVert_r^2.
$$
Combining this with \eqref{eq:Abl}, we arrive at the differential inequality 
$\frac{d}{dr}\, \lVert g \rVert_r^2 \le \sigma^2 \lVert \nabla g \rVert_r^2$. 
Integrating it from $2$ to $r$ yields \eqref{eq:AS}.
\end{proof}

By Poincar\'{e} inequality, \eqref{eq:AS} implies
\begin{equation}\label{eq:1fLSI}
	\lVert g - \E g \rVert_r^2 \le \sigma^2(r-1) \lVert \nabla g \rVert_r^2
\end{equation}
In particular, Assumption \ref{ass:Set1} (1) is satisfied with $p=r_0=2$ and $L=1$.

For Assumption \ref{ass:Set1} (2), the higher order difference operators are given by $\Gamma^{(j)} g(x) := |g^{(j)}(x)|_{\mathrm{op}}$. In particular, $\Gamma^{(1)} g = |g^{(1)}|_{\mathrm{op}} = |\nabla g|$ is the Euclidean norm of the gradient. Now, the following lemma holds, which establishes Assumption \ref{ass:Set1} (2).

\begin{lemma}[\cite{BGS19}, Le.\ 4.1]
	\label{lem:itGradHessLSI}
	Given a $\mathcal{C}^{j+1}$-smooth function $f \colon \R^n \to \mathbb{R}$, $j \in \mathbb{N}$, for any $x \in \R^n$ it holds that
	\[
	|\nabla \lvert f^{(j)}(x) \rvert_{\mathrm{op}}| \le 
	\lvert f^{(j+1)}(x) \rvert_{\mathrm{op}}.
	\]
\end{lemma}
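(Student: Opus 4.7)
\emph{Plan.} The quantity $|\nabla |f^{(j)}(x)|_{\mathrm{op}}|$ must be read in the generalized sense of \eqref{eq:genmod}, since $x \mapsto |f^{(j)}(x)|_{\mathrm{op}}$ is in general only locally Lipschitz, not differentiable. My strategy is to estimate the increment $\big||f^{(j)}(y)|_{\mathrm{op}} - |f^{(j)}(x)|_{\mathrm{op}}\big|$ by $|f^{(j+1)}(x)|_{\mathrm{op}}\cdot|y-x| + o(|y-x|)$ as $y \to x$, and then to divide by $|y-x|$ and pass to the $\limsup$.

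First I would apply the reverse triangle inequality, valid because $|\cdot|_{\mathrm{op}}$ is a genuine norm on the space of $j$-tensors $\R^{n^j}$:
\[
\big| |f^{(j)}(y)|_{\mathrm{op}} - |f^{(j)}(x)|_{\mathrm{op}} \big| \le |f^{(j)}(y) - f^{(j)}(x)|_{\mathrm{op}}.
\]
Then, since $f \in \mathcal{C}^{j+1}$, a componentwise first-order Taylor expansion of the tensor-valued map $y \mapsto f^{(j)}(y)$ at $x$ yields
\[
f^{(j)}(y) - f^{(j)}(x) = \sum_{k=1}^n f^{(j+1)}_{k,\bullet}(x)(y_k - x_k) + R(x,y),
\]
where $f^{(j+1)}_{k,\bullet}(x)$ denotes the $j$-tensor obtained from $f^{(j+1)}(x)$ by fixing the first index to $k$, and continuity of $f^{(j+1)}$ gives $|R(x,y)|_{\mathrm{op}} = o(|y-x|)$.

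Next I would bound the linear part via the variational definition of the operator norm. Setting $w := (y-x)/|y-x|$ and noting that $w$ is itself a unit vector admissible in the supremum below,
\[
\Big| \sum_{k=1}^n f^{(j+1)}_{k,\bullet}(x)(y_k - x_k) \Big|_{\mathrm{op}} = |y-x| \sup_{|v^\ell| \le 1} \sum_{k, i_1, \ldots, i_j} f^{(j+1)}_{k, i_1, \ldots, i_j}(x)\, w_k\, v^1_{i_1} \cdots v^j_{i_j} \le |y-x| \cdot |f^{(j+1)}(x)|_{\mathrm{op}}.
\]
Combining this with the remainder bound, dividing by $|y-x|$, and taking $\limsup_{y \to x}$ eliminates the remainder and produces the claim.

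The only mildly delicate point is notational: one must identify the ``direction'' index in the Taylor expansion with the first index of $f^{(j+1)}(x)$, so that $|f^{(j+1)}(x)|_{\mathrm{op}}$ is exactly the Lipschitz constant of the linear tensor-valued map $h \mapsto \sum_k f^{(j+1)}_{k,\bullet}(x) h_k$ measured in the operator norm on $j$-tensors. Once this identification is made, the lemma is a direct consequence of the supremum-based definition of $|\cdot|_{\mathrm{op}}$; in particular no symmetry of mixed partials is invoked.
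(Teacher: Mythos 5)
Your argument is correct and is essentially the paper's own proof: both start from the reverse triangle inequality for the operator norm, Taylor-expand $f^{(j)}$ to first order with a remainder that is $o(|y-x|)$ uniformly over the test vectors, and bound the linear term by $|f^{(j+1)}(x)|_{\mathrm{op}}$ via the supremum definition (the paper merely interchanges the order, taking the supremum over $v_1,\dots,v_j$ before expanding, whereas you expand the tensor-valued map first). Your closing remark about placing the direction index and not needing symmetry of mixed partials is a fair observation but changes nothing of substance.
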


\begin{proof} For any $h \in \mathbb{R}^n$, by the triangle inequality,
	\begin{align*}
		& 
		\big|\,\lvert f^{(j)}(x+h) \rvert_{\mathrm{op}} - 
		\lvert f^{(j)}(x) \rvert_{\mathrm{op}}\big|
		\, \le \,
		\lvert f^{(j)}(x+h) - f^{(j)}(x) \rvert_{\mathrm{op}}\\
		= \, &
		\sup \{ (f^{(j)}(x+h) - f^{(j)}(x))[v_1, \ldots, v_{j-1}] 
		\colon v_1, \ldots, v_j \in \S^{n-1} \},
	\end{align*}
	while, by the Taylor expansion,
	$$
	(f^{(j)}(x+h) - f^{(j)}(x))[v_1, \ldots, v_j] = 
	f^{(j+1)}(x)[v_1, \ldots, v_j, h] + o(|h|_\infty)
	$$
	as $h \to 0$. Here, the $o$-term can be bounded by a quantity which is 
	independent of $v_1, \ldots, v_j \in \S^{n-1}$. As a consequence,
	\begin{align*}
		&
		\limsup_{h \to 0} 
		\frac{|\lvert f^{(j)}(x+h) \rvert_{\mathrm{op}} - \lvert f^{(j)}(x) \rvert_{\mathrm{op}}|}{|h|}\\
		\le \; &\sup \{ f^{(j+1)}(x)[v_1, \ldots, v_j, v_{j+1}] \colon v_1, \ldots, v_{j+1} \in \S^{n-1}\}
		= \lvert f^{(j+1)}(x) \rvert_{\mathrm{op}}.\qedhere
	\end{align*}
\end{proof}

The proof of Theorem \ref{thm:HigherOrderLSI} now reduces to a simple plug-in into Theorem \ref{thm:HigherOrder}. Finally, to see Proposition \ref{prop:HWILSU} we need to check \eqref{eq:ass3} together with some simple calculations.

\begin{proof}[Proof of Proposition \ref{prop:HWILSU}]
First note that in the notation of Corollary \ref{cor:HWI}, $\diamondsuit = \mathrm{op}$ and $\square = \mathrm{HS}$. Moreover,
    \[
    \partial_i f(x) = 2\sum_{j=1}^n a_{ij}x_j, \qquad \partial_{ij} f(x) = 2a_{ij}.
    \]
    In particular, $f''(x) = 2A$. In particular, each $\partial_i f$ is centered, so that by the Poincar\'{e} inequality (applied to each summand $\partial_i f$),
    \[
    \int \sum_{i=1} ^n |\partial_i f|^2 d\mu \le \sigma^2 \int \sum_{i,j=1}^n (\partial_{ij}f(x))^2 d\mu = \sigma^2 \int \lvert f''(x) \rvert_\mathrm{HS}^2 d\mu,
    \]
    which establishes \eqref{eq:ass3} and hence concludes the proof.
\end{proof}

\subsection{The Euclidean sphere}\label{sec:EuclSph}

One of the classical (and historically first) objects in the theory of concentration of measure is the Euclidean unit sphere
\[
\S^{n-1} = \{x \in \mathbb{R}^n \colon |x| = 1\},\qquad n \ge 2,
\]
equipped with the uniform measure $\nu_n$. The latter may be realized as the normalization of an $n$-dimensional standard Gaussian. Indeed, if $Z=(Z_1,\ldots,Z_n) \in \R^n$ is a random vector with i.i.d.\ standard Gaussian (i.\,e., $\mathcal{N}(0,1)$) components, then $Z/\lvert Z \rvert$ has distribution $\nu_n$, and $Z/\lvert Z \rvert$ and $\lvert Z \rvert$ are independent. From a different perspective, this means decomposing the $n$-dimensional standard Gaussian distribution into a radial and a directional component which are independent of each other.

We will formulate several higher order concentration results for functions on the sphere. In this context, a central observation is that $\nu_n$ satisfies a log-Sobolev inequality, so that we may lean on the results established in the previous section.

More precisely, as established in \cite{MW82}, $\nu_n$ satisfies a log-Sobolev inequality with constant $1/(n-1)$ with respect to the intrinsic gradient $\nabla_S$ (or, more generally, the generalized modulus of the gradient induced by the Riemannian metric on $\S^{n-1}$, cf.\ \eqref{eq:intgrad} and the discussion thereafter):
\begin{equation}\label{eq:LSISphere}
	\mathrm{Ent}_{\nu_n} (f^2) \le \frac{2}{n-1} \int_{\S^{n-1}} \lvert \nabla_S f \rvert_2^2 d\nu_n.
\end{equation}
To derive an explicit formula for $\nabla_S f$, recall that the tangent space in $\theta \in \S^{n-1}$ is given by $T_\theta\S^{n-1} = \theta^T = \{x \in \R^n \colon \langle x, \theta \rangle = 0\}$. Therefore, according to \eqref{eq:intgrad}, if $f$ is a smooth function in a neighbourhood of $\S^{n-1}$, its intrinsic gradient is given by
\begin{equation}\label{eq:FormelSphereGradient}
\nabla_{\S^{n-1}} f(\theta) = \nabla_S f(\theta) = \nabla f(\theta) - \langle \nabla f(\theta), \theta \rangle \theta.
\end{equation}
By orthogonality, it clearly holds that
\begin{equation}\label{eq:intrgradmin}
\lvert \nabla_S (\theta) \lvert \le \lvert \nabla f(\theta) \rvert
\end{equation}
for all $\theta \in \S^{n-1}$, so that the $\nabla_S$-LSI($1/(n-1)$) \eqref{eq:LSISphere} also implies a $\nabla$-LSI($1/(n-1)$).

In what follows, we will state three different types of higher order concentration results on $\S^{n-1}$ which involve different types of derivatives, reflecting the somewhat more involved nature of intrinsic calculus.

\subsubsection{Euclidean derivatives}
Once $f$ is defined in a neighbourhood of the sphere, we may simply take the usual Euclidean derivatives of $f$, which amounts to regarding $\nu_n$ as a probability measure on $\R^n$.

\begin{proposition}\label{prop:HigherOrderSphereEucl}
Whenever $f$ is a real-valued $\mathcal{C}^d$-function defined in a neighbourhood of the sphere, Theorem \ref{thm:HigherOrderLSI} and Proposition \ref{prop:HWILSU} continue to hold for $\mu = \nu_n$ with $\sigma^2 = 1/(n-1)$.
\end{proposition}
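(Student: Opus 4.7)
The strategy is to reduce everything to a direct application of Theorem \ref{thm:HigherOrderLSI} and the proof of Proposition \ref{prop:HWILSU}. The key observation is that $\nu_n$ satisfies a Euclidean log-Sobolev inequality with constant $1/(n-1)$: combining \eqref{eq:LSISphere} with the pointwise bound \eqref{eq:intrgradmin} yields
\[
\mathrm{Ent}_{\nu_n}(f^2) \le \frac{2}{n-1} \int_{\S^{n-1}} |\nabla f|^2 d\nu_n
\]
for every $\mathcal{C}^1$-function $f$ in a neighbourhood of the sphere, where $\nabla f$ denotes the usual Euclidean gradient of the extension.

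Once this is in place, the proof of Theorem \ref{thm:HigherOrderLSI} carries over verbatim: the Aida--Stroock argument leading to \eqref{eq:AS} and then to \eqref{eq:1fLSI} is purely a pointwise manipulation of $|g|^{r/2}$ and $|\nabla g|$ combined with the LSI applied to $|g|^{r/2}$, all of which remain valid in the present setting with $\sigma^2 = 1/(n-1)$. This verifies Assumptions \ref{ass:Set1}(1) with $p = r_0 = 2$ and $L=1$. Assumptions \ref{ass:Set1}(2) for $\Gamma^{(j)} g = |g^{(j)}|_\mathrm{op}$ follows from Lemma \ref{lem:itGradHessLSI}, which is a local statement and so applies on any open neighbourhood of $\S^{n-1}$ where $f$ is sufficiently smooth. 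An application of Theorem \ref{thm:HigherOrder} now yields the higher-order concentration bounds of Theorem \ref{thm:HigherOrderLSI} with $\sigma^2 = 1/(n-1)$.

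For the Hanson--Wright part, I would check the two ingredients of the proof of Proposition \ref{prop:HWILSU}. First, $\int x_i d\nu_n = 0$ is immediate from the reflection symmetry $x \mapsto -x$ of $\nu_n$. Second, the Poincar\'{e} inequality with constant $1/(n-1)$ implied by the log-Sobolev inequality above allows the calculation preceding \eqref{eq:ass3} to go through unchanged: applied to each centered coordinate of $\nabla f(x) = 2Ax$, it yields \eqref{eq:ass3} with $\diamondsuit = \mathrm{op}$ and $\square = \mathrm{HS}$, so Corollary \ref{cor:HWI} gives the claimed two-level tail bound.

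The main (mild) obstacle is conceptual: one must ensure that the framework of Section \ref{sec:LSIs}, originally phrased for measures and functions on $\R^n$, makes sense for $\nu_n$ and for functions defined only in a neighbourhood of $\S^{n-1}$. Since every step in the proofs of Theorem \ref{thm:HigherOrderLSI} and Proposition \ref{prop:HWILSU} is a local pointwise manipulation combined with a global LSI or Poincar\'{e} inequality, and since the extension hypothesis on $f$ provides well-defined Euclidean derivatives $f^{(j)}$ in a neighbourhood of the support of $\nu_n$, no modifications are needed.
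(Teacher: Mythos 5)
Your proposal is correct and follows exactly the route the paper intends: the pointwise bound \eqref{eq:intrgradmin} upgrades the spherical log-Sobolev inequality \eqref{eq:LSISphere} to a Euclidean $\nabla$-LSI with constant $1/(n-1)$, after which $\nu_n$ is simply treated as a measure on $\R^n$ and Theorem \ref{thm:HigherOrderLSI} and Proposition \ref{prop:HWILSU} apply verbatim. Your additional checks (the centering $\int x_i\, d\nu_n = 0$ by symmetry, and the locality of the Aida--Stroock argument and of Lemma \ref{lem:itGradHessLSI}) are exactly the details the paper leaves implicit.
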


By Proposition \ref{prop:HigherOrderSphereEucl}, we retrieve \cite[Th.\ 1.13]{BGS19}. In applications, this approach can already be sufficient.

\subsubsection{Spherical derivatives}
Obviously, one would like to have a version of Proposition \ref{prop:HigherOrderSphereEucl} where the Euclidean derivatives are replaced by spherical (intrinsic) derivatives, which can be much sharper (cf. \eqref{eq:intrgradmin}) and only depend on the values of $f$ on $\S^{n-1}$.

To this end, we focus on the case $d=2$ and introduce a spherical Hessian. We first provide a purely intrinsic definition. For any $\mathcal{C}^2$-smooth function $f \colon \mathbb{R}^n \to \mathbb{R}$ at a given point $\theta \in \S^{n-1}$, consider the Taylor expansion up to the quadratic term
\begin{equation}\label{eq:Taylor2ndOrderSph}
	f(\theta') = f(\theta) + \langle\nabla_S f(\theta), \theta'-\theta\rangle + \frac{1}{2} \langle B(\theta'-\theta), \theta'-\theta \rangle + o\big(|\theta' - \theta|^2\big)
\end{equation}
as $\theta' \rightarrow \theta$, $\theta' \in \S^{n-1}$, where $B \in \mathbb{R}^{n \times n}$ is some matrix. The collection of all $B$'s satisfying \eqref{eq:Taylor2ndOrderSph} represents an affine subspace of $\mathbb{R}^{n \times n}$. Therefore, among all of them, there exists a unique matrix of smallest Hilbert--Schmidt norm. It is called the (intrinsic) second derivative of $f$ at the point $\theta$ and will be denoted $f_S''(\theta)$.

If $f$ is $\mathcal{C}^2$-smooth function on some open neighbourhood of $W_{n,k}$, by the usual (Euclidean) Taylor expansion, \eqref{eq:Taylor2ndOrderSph} holds with
\begin{equation}\label{eq:MatrixBSph}
	B = f''(\theta) - \langle \theta, \nabla f(\theta)\rangle I_n,
\end{equation}
where $f''(\theta)$ is the Euclidean Hessian of $f$ at $\theta$. In general, given $C \in \mathbb{R}^{n \times n}$, the matrix $B - C$ satisfies \eqref{eq:Taylor2ndOrderSph} if and only if
\[
\langle C(\theta' - \theta), \theta'-\theta\rangle = o\big(|\theta'-\theta|^2\big)
\]
for $\theta' \to \theta$, $\theta' \in \S^{n-1}$. Similarly to the first order case, this is equivalent to $\langle Cx, x \rangle = 0$ for all $x \in T_\theta$. This condition defines a linear subspace $L$ of $\mathbb{R}^{n \times n}$, and the minimization problem translates into
\[
\lvert B - C \rvert \to \mathrm{min}\qquad \text{over all $C \in L$,}
\]
which is solved uniquely for the orthogonal projection of $B$ onto the linear space $L^\perp$ of all matrices orthogonal to $L$. Moreover, we may restrict ourselves to symmetric matrices (by symmetry of the Euclidean Hessian). As a result, we obtain the following description, which goes back to \cite[Prop.\ 8.1]{BCG17}.

\begin{proposition}\label{2ndOrDerSph}
	The intrinsic second derivative of $f$ at each $\theta \in \S^{n-1}$ is the symmetric matrix which is given by the orthogonal projection
	\[
	f_S''(\theta) = P_{\theta^\perp} B, \qquad B = f''(\theta) - \langle \theta, \nabla f(\theta)\rangle I_n,
	\]
	to the orthogonal complement of the linear subspace $L = L_\theta$ of all symmetric matrices $C$ in $\mathbb{R}^{n \times n}$ such that $\langle Cx, x \rangle = 0$ for all $x \in T_\theta$. Equivalently,
	\[
	f_S''(\theta) = P_{\theta^\perp} B P_{\theta^\perp}.
	\]
\end{proposition}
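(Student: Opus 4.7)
The plan is to observe that the discussion preceding Proposition~\ref{2ndOrDerSph} already reduces matters to a short linear-algebra computation. That text establishes: (i) the matrix $B = f''(\theta) - \langle \theta, \nabla f(\theta)\rangle I_n$ satisfies the Taylor expansion \eqref{eq:Taylor2ndOrderSph}; (ii) any other admissible symmetric matrix differs from $B$ by an element of $L = \{C \in \Rns : \langle Cx, x\rangle = 0 \text{ for all } x \in T_\theta\}$; and (iii) the minimization problem $\lvert B - C \rvert_{\mathrm{HS}} \to \min$ over $C \in L$ is solved uniquely by the orthogonal projection of $B$ onto $L^\perp$ inside $\Rns$, which is standard Hilbert-space geometry. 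This already yields the first displayed formula of the proposition. It remains to verify the equivalent closed-form expression $f_S''(\theta) = P_{\theta^\perp} B P_{\theta^\perp}$, where now $P_{\theta^\perp} = I_n - \theta\theta^T$ denotes the literal orthogonal projection on $\R^n$.

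For this, I would identify $L$ and $L^\perp$ explicitly. Applying polarization to the symmetric bilinear form $(x,y) \mapsto \langle Cx, y\rangle$ shows that, for symmetric $C$, the condition $\langle Cx, x\rangle = 0$ for all $x \in \theta^\perp$ is equivalent to $P_{\theta^\perp} C P_{\theta^\perp} = 0$. Hence $L$ consists exactly of the symmetric matrices of the form $\alpha \theta\theta^T + \theta v^T + v \theta^T$ with $\alpha \in \R$ and $v \in \R^n$. A direct computation with the Hilbert--Schmidt inner product (pairing a candidate $D$ against such matrices and letting $\alpha$ and $v$ vary) then yields the characterization $L^\perp = \{D \in \Rns : D\theta = 0\} = \{P_{\theta^\perp} D P_{\theta^\perp} : D \in \Rns\}$.

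To conclude, I would exhibit the orthogonal decomposition
\[
B \;=\; P_{\theta^\perp} B P_{\theta^\perp} \;+\; \bigl(B - P_{\theta^\perp} B P_{\theta^\perp}\bigr).
\]
The first summand satisfies $(P_{\theta^\perp} B P_{\theta^\perp})\theta = 0$ and so lies in $L^\perp$; the second satisfies $P_{\theta^\perp}(B - P_{\theta^\perp} B P_{\theta^\perp}) P_{\theta^\perp} = 0$ by idempotency of $P_{\theta^\perp}$ and so lies in $L$. Uniqueness of the orthogonal decomposition $\Rns = L \oplus L^\perp$ then identifies the projection of $B$ onto $L^\perp$ as $P_{\theta^\perp} B P_{\theta^\perp}$, which is the asserted formula. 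I do not anticipate a substantive obstacle: the only real content is the polarization step and the ensuing identification of $L^\perp$, both of which amount to routine bookkeeping in the Hilbert--Schmidt inner product. The one point that warrants a line of comment in the written proof is the dual use of the symbol $P_{\theta^\perp}$ --- first as the abstract projection in $(\Rns, \langle \cdot, \cdot \rangle_{\mathrm{HS}})$ onto $L^\perp$, and then as the concrete rank-$(n-1)$ projection $I_n - \theta\theta^T$ on $\R^n$ --- since the displayed equivalence is precisely the statement that these two operations agree on $B$.
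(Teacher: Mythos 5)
Your proposal is correct and follows the same route the paper takes: the reduction to the minimization $\lvert B-C\rvert_{\mathrm{HS}}\to\min$ over $C\in L$ is exactly the preceding discussion, and your polarization argument identifying $L^\perp=\{D\in\Rns\colon D\theta=0\}$ together with the orthogonal decomposition $B=P_{\theta^\perp}BP_{\theta^\perp}+(B-P_{\theta^\perp}BP_{\theta^\perp})$ is precisely the computation the paper delegates to \cite[Prop.~8.1]{BCG17}. Your closing remark on the dual use of the symbol $P_{\theta^\perp}$ is well taken and worth keeping.
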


In particular, we have the contraction property
\[
\vert f_S''(\theta)\rvert_\mathrm{HS} \, \leq \, \lvert f''(\theta) - \langle \theta, \nabla f(\theta)\rangle I_n\rvert_{\mathrm{HS}},
\]
which also holds for the operator norm.

We can now formulate a second order concentration result on the sphere.

\begin{proposition}\label{prop:HigherOrderSphereIntr}
	Let $f \colon \S^{n-1} \to \mathbb{R}$ be a $\mathcal{C}^2$-smooth function with $\E f =0$.
	\begin{enumerate}
		\item Assuming $\lVert \nabla_S f \rVert_{\mathrm{HS},2} \le 1/\sqrt{n-1}$ and $\lVert f''_S \rVert_{\mathrm{op},\infty} \le 1$, we have
		\[
		\int_{\S^{n-1}} \exp\Big(\frac{n-1}{32e} |f|\Big) d\nu_n \le 2.
		\]
		\item For $C = 16e^2/\log(2)$ and any $t \ge 0$,
		\[
		\nu_n(|f-\nu_n(f)| \ge t) \le 2 \exp\Big(-\frac{n-1}{C}\min\Big(\frac{t^2}{\lVert \nabla_S f \rVert_{\mathrm{HS},2}^2}, \frac{t}{\lVert f_S'' \rVert_{\mathrm{op},\infty}}\Big)\Big).
		\]
		\item If $\E \nabla_S f = 0$, the bounds in (1) and (2) continue to hold with $\lVert \nabla_S f \rVert_{\mathrm{HS},2}^2$ replaced by $\lVert f_S'' \rVert_{\mathrm{HS},2}^2/(n+2)$.
	\end{enumerate}
\end{proposition}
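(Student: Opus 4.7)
The strategy is to specialize Theorem \ref{thm:HigherOrder} to $d = 2$ on $(\S^{n-1}, \nu_n)$, equipped with the intrinsic difference operators $\Gamma(f) = \Gamma^{(1)}(f) = |\nabla_S f|$ and $\Gamma^{(2)}(f) = |f_S''|_{\mathrm{op}}$, with parameters $p = r_0 = 2$, $L = 1$ and $\sigma^2 = 1/(n-1)$. For Assumption \ref{ass:Set1} (1), the Aida--Stroock argument leading to \eqref{eq:AS} goes through verbatim once $|\nabla \cdot|$ is replaced by $|\nabla_S \cdot|$: the chain rule \eqref{eq:chainrule} for the generalized modulus of the gradient still holds, and the role of \eqref{eq:LSI} is now played by \eqref{eq:LSISphere}. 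A Poincaré step as in \eqref{eq:1fLSI} then yields $\lVert g - \E g \rVert_r \le \sigma \sqrt{r-1}\,\lVert \nabla_S g \rVert_r$ for all $r \ge 2$, which is Assumption \ref{ass:Set1} (1) in our setting.

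The second ingredient is the intrinsic analogue of Lemma \ref{lem:itGradHessLSI}, namely
\[
|\nabla_S(|\nabla_S f(\theta)|)| \le |f_S''(\theta)|_{\mathrm{op}},
\]
which I would establish by adapting the proof of Lemma \ref{lem:itGradHessLSI} to the intrinsic Taylor expansion \eqref{eq:Taylor2ndOrderSph} in place of the Euclidean one. After applying the reverse triangle inequality to $|\nabla_S f(\theta')| - |\nabla_S f(\theta)|$ and expanding to second order along $\theta' \in \S^{n-1}$, the linear term vanishes (since $\nabla_S f(\theta)$ is already tangent at $\theta$), while the quadratic term is controlled by the operator norm of the matrix $B$ of \eqref{eq:MatrixBSph} restricted to $T_\theta \S^{n-1} \times T_\theta \S^{n-1}$, which by Proposition \ref{2ndOrDerSph} equals $|f_S''(\theta)|_{\mathrm{op}}$. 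With both parts of Assumption \ref{ass:Set1} in hand, parts (1) and (2) of the proposition follow by plugging into \eqref{eq:iteratedBd}, up to a cosmetic change: since the statement uses $L^2$ rather than $L^1$ norms of $\nabla_S f$, the triangle step in the proof of Theorem \ref{thm:HigherOrder} is replaced by $\lVert g \rVert_r \le \lVert g \rVert_2 + L\sigma r^{1/p} \lVert \Gamma g \rVert_r$ (still valid since $|\E g| \le \lVert g \rVert_2$), and the remainder of the argument, including the application of Lemma \ref{lem:Conc}, is unchanged.

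For (3), the hypothesis $\E \nabla_S f = 0$ allows a Poincaré-type improvement in the first-order term. Applying the scalar Poincaré inequality on the sphere to each Euclidean component of the vector field $\nabla_S f$ and summing, I would obtain
\[
\lVert \nabla_S f \rVert_{\mathrm{HS},2}^2 = \sum_{i=1}^n \Var_{\nu_n}\big((\nabla_S f)_i\big) \le \frac{1}{n+2} \lVert f_S'' \rVert_{\mathrm{HS},2}^2,
\]
after which substitution into (1) and (2) yields (3). The main technical obstacle is pinning down the sharp constant $1/(n+2)$: centering of $\nabla_S f$ removes the degree-$0$ contribution, while tangentiality ($\langle \nabla_S f(\theta), \theta \rangle = 0$) excludes degree-$1$ spherical harmonics, so that the relevant spectral gap is that on degree-$\ge 2$ modes, and one must combine this with a Bochner-type identification of $\sum_i |\nabla_S(\nabla_S f)_i|^2$ with $|f_S''|_{\mathrm{HS}}^2$, where the correction $-\langle \theta, \nabla f\rangle I_n$ from \eqref{eq:MatrixBSph} disappears after tangential projection in accordance with Proposition \ref{2ndOrDerSph}. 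This eigenvalue/Bochner bookkeeping is the one genuinely new calculation; once it is done, the substitution is formal.
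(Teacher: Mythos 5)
Your overall architecture is the same as the paper's: verify Assumptions \ref{ass:Set1} on $(\S^{n-1},\nu_n)$ with $\Gamma^{(1)}f=\lvert\nabla_S f\rvert$, $\Gamma^{(2)}f=\lvert f_S''\rvert_{\mathrm{op}}$, $p=r_0=2$, $L=1$, $\sigma^2=1/(n-1)$, using the Aida--Stroock argument for the spherical LSI, then feed the result into the $d=2$ recursion; part (3) comes from the improved first-to-second-order Poincar\'{e} bound $\lVert\nabla_S f\rVert_2^2\le\lVert f_S''\rVert_{\mathrm{HS},2}^2/(n+2)$ under $\E\nabla_S f=0$ (the paper routes this through Corollary \ref{cor:HWI} and \eqref{eq:ass3}; your direct substitution is equivalent). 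Your handling of the $L^1$ versus $L^2$ norm in the first-order term is also fine.

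The genuine gap is in your derivation of $\lvert\nabla_S\lvert\nabla_S f\rvert\rvert\le\lvert f_S''\rvert_{\mathrm{op}}$, i.e.\ \eqref{eq:BCGitgrad}, which the paper explicitly flags as ``more subtle'' and takes from \cite[Le.~3.1]{BCG17}. The reverse-triangle-inequality route of Lemma \ref{lem:itGradHessLSI} does not transfer: since $\nabla_S f(\theta')$ and $\nabla_S f(\theta)$ live in different tangent spaces, the first-order variation of the vector field $\theta\mapsto\nabla_S f(\theta)$ contains, besides the term governed by $B$ from \eqref{eq:MatrixBSph}, a normal component of the form $-\theta\,\langle\theta'-\theta,\nabla_S f(\theta)\rangle+o(\lvert\theta'-\theta\rvert)$ coming from the variation of the projection $P_{\theta'}$. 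This term is of first order and of size up to $\lvert\nabla_S f(\theta)\rvert\,\lvert\theta'-\theta\rvert$, so bounding $\bigl|\lvert\nabla_S f(\theta')\rvert-\lvert\nabla_S f(\theta)\rvert\bigr|$ by $\lvert\nabla_S f(\theta')-\nabla_S f(\theta)\rvert$ would only yield $\lvert f_S''\rvert_{\mathrm{op}}+\lvert\nabla_S f\rvert$ on the right-hand side. The point of the calculation in \cite{BCG17} is that this normal component is orthogonal to $\nabla_S f(\theta)$ and therefore does not contribute to the first-order variation of the \emph{norm}; your assertion that ``the linear term vanishes since $\nabla_S f(\theta)$ is already tangent'' conflates this orthogonality argument with a vanishing that does not occur. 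Relatedly, for part (3) you correctly identify the target inequality and the right ingredients (spectral gap on harmonics of degree $\ge 2$ plus the identification of $\sum_i\lvert\nabla_S(\nabla_S f)_i\rvert^2$ with $\lvert f_S''\rvert_{\mathrm{HS}}^2$), but you leave the computation open --- note that a naive componentwise Poincar\'{e} inequality gives only the constant $1/(n-1)$, so the constant $1/(n+2)$ really does require the bookkeeping of \cite[Prop.~5.1]{BCG17} that you defer.
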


In particular, as it has been discussed in \cite{BCG17}, the property $\E \nabla_S f = 0$ is equivalent to $f$ being orthogonal to all linear functions, so that we get back \cite[Th.\ 1.1]{BCG17}.

To prove Proposition \ref{prop:HigherOrderSphereIntr} (1) \& (2), one needs to check Assumptions \ref{ass:Set1} almost in the same way as in the proof of Theorem \ref{thm:HigherOrderLSI} just with the obvious modifications due to the use of intrinsic derivatives.

To see Assumption \ref{ass:Set1} (1), note that by \eqref{eq:AS} (whose proof remains valid since it essentially only uses the chain rule \eqref{eq:chainrule}) and \eqref{eq:1fLSI}, we obtain
\begin{equation*} 
\lVert g - \E g \rVert_r \le \frac{1}{\sqrt{n-1}} r^{1/2} \lVert \nabla_S g \rVert_r
\end{equation*}
for any $r \ge 2$. Due to the use of the intrinsic gradient, Assumption \ref{ass:Set1} (2) is more subtle. Indeed, by somewhat more extensive calculations, it has been shown in \cite[Le.\ 3.1]{BCG17} that
\begin{equation}\label{eq:BCGitgrad}
	\lvert\nabla_S\lvert\nabla_S f \rvert \rvert \le \lvert f''_S \rvert_\mathrm{op}.
\end{equation}

These two properties hence establish Proposition \ref{prop:HigherOrderSphereIntr} (1) \& (2). Moreover, (3) is a consequence of Corollary \ref{cor:HWI}, for which we need to verify \eqref{eq:ass3}. Setting $\diamondsuit = \mathrm{op}$ and $\square = \mathrm{HS}$, the relevant calculations are due to \cite[Prop.\ 5.1]{BCG17}.

Summing up, we have thus (re-)established second order concentration results on the sphere in the spirit of \cite{BCG17}. The drawback of this approach is that it is considerably difficult to extend it to higher orders $d \ge 3$. Indeed, it is not quite immediate even how to define an intrinsic third order tensor of derivatives for the sphere.

\subsubsection{Spherical partial derivatives}
It is possible to formulate a sort of ''intermediate`` result between Proposition \ref{prop:HigherOrderSphereEucl} and Proposition \ref{prop:HigherOrderSphereIntr} which holds for every fixed order $d$ and works with a notion of derivatives which is stronger than the standard Euclidean but weaker than spherical derivatives.

Here, the key notion are spherical partial derivatives, which are introduced as follows. Given a differentiable function $f \colon \S^{n-1} \to \mathbb{R}$, the spherical partial derivatives are given by $D_i f = \langle \nabla_S f, e_i \rangle$, $i = 1, \ldots, n$, where $e_i$ denotes the $i$-th standard unit vector in $\mathbb{R}^n$. Higher order spherical partial derivatives are defined by iteration, e.\,g.,  $D_{ij} f = \langle \nabla_S \langle \nabla_S f, e_j \rangle, e_i \rangle$ for any $1 \le i,j \le n$. Note that in general, $D_{ij}f \ne D_{ji} f$. If $f \in \mathcal{C}^d(\S^{n-1})$, we denote by $D^{(d)} f (\theta)$ the hyper-matrix of the spherical partial derivatives of order $d$, i.\,e.,
\begin{equation*}
	(D^{(d)} f(\theta))_{i_1 \ldots i_d} = D_{i_1 \ldots i_d} f(\theta), \qquad
	\theta \in S^{n-1}.
\end{equation*}

\begin{proposition}\label{prop:HigherOrderSpherePart}
For any $f \in \mathcal{C}^d(\S^{n-1})$, Theorem \ref{thm:HigherOrderLSI} and Proposition \ref{prop:HWILSU} hold with $\sigma^2 = 1/(n-1)$ and with the Euclidean derivatives $f^{(j)}$ replaced by spherical partial derivatives $D^{(j)} f$, $j=1, \ldots, d$.
\end{proposition}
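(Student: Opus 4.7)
The plan is to apply Theorem \ref{thm:HigherOrder} with $\mathcal{X} = \S^{n-1}$, $\mu = \nu_n$, $\Gamma g = |\nabla_S g|$, $\Gamma^{(j)} g = |D^{(j)} g|_{\mathrm{op}}$, and parameters $p = r_0 = 2$, $L = 1$, $\sigma^2 = 1/(n-1)$. Two items have to be verified.

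For Assumption \ref{ass:Set1} (1), I would run the Aida--Stroock argument leading to \eqref{eq:AS} verbatim, using the LSI \eqref{eq:LSISphere} in place of \eqref{eq:LSI}. Its proof only relies on the chain rule \eqref{eq:chainrule} and Hölder's inequality, both of which remain valid for the generalized modulus $|\nabla_S\cdot|$, so one obtains $\lVert g - \E g\rVert_r \leq \frac{1}{\sqrt{n-1}} r^{1/2} \lVert \nabla_S g\rVert_r$ for $r \ge 2$, as in \eqref{eq:1fLSI}. Since $D^{(1)} g = \nabla_S g$ and the operator norm of a rank-one tensor coincides with its Euclidean length, $\Gamma^{(1)} g = |\nabla_S g|$, and Assumption \ref{ass:Set1} (1) holds.

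For Assumption \ref{ass:Set1} (2), the crux is the spherical analogue of Lemma \ref{lem:itGradHessLSI}:
\[
|\nabla_S\,|D^{(j)} g|_{\mathrm{op}}|(\theta) \leq |D^{(j+1)} g(\theta)|_{\mathrm{op}}
\]
for $\mathcal{C}^{j+1}$-smooth $g$. I would prove this by combining two observations. First, the reverse triangle inequality applied to the defining supremum of $|\cdot|_{\mathrm{op}}$ gives
\[
\bigl||D^{(j)} g(\theta')|_{\mathrm{op}} - |D^{(j)} g(\theta)|_{\mathrm{op}}\bigr| \leq \sup_{v^1,\ldots,v^j \in \S^{n-1}} |\psi_v(\theta') - \psi_v(\theta)|,
\]
where $\psi_v(\theta) := \sum D_{i_1\ldots i_j} g(\theta)\, v^1_{i_1}\cdots v^j_{i_j}$. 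Second, along the minimizing geodesic $\gamma$ from $\theta$ to $\theta'$, the fundamental theorem of calculus yields $|\psi_v(\theta') - \psi_v(\theta)| \le d(\theta,\theta') \sup_t |\nabla_S \psi_v(\gamma(t))|$, and since $(\nabla_S \psi_v)_\ell = \sum (D^{(j+1)} g)_{\ell i_1\ldots i_j} v^1_{i_1}\cdots v^j_{i_j}$, taking a further sup over $v^0 \in \S^{n-1}$ gives $\sup_{v^1,\ldots,v^j \in \S^{n-1}} |\nabla_S \psi_v| \leq |D^{(j+1)} g|_{\mathrm{op}}$. Dividing by $d(\theta,\theta')$ and invoking continuity of $D^{(j+1)} g$ together with \eqref{eq:genmod} yields the claim.

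Theorem \ref{thm:HigherOrder} then delivers both statements of Theorem \ref{thm:HigherOrderLSI} (adapted) with spherical partial derivatives. For the Hanson--Wright analogue, I would verify \eqref{eq:ass3} in Corollary \ref{cor:HWI} with $\diamondsuit = \mathrm{op}$ and $\square = \mathrm{HS}$. For $g$ with $\E D_i g = 0$ for every $i$, the Poincaré inequality implied by \eqref{eq:LSISphere} applied to each centered coordinate $D_i g$ gives $\E(D_i g)^2 \le \frac{1}{n-1} \sum_j \E(D_{ji} g)^2$; summing over $i$ produces $\E|\nabla_S g|^2 \le \frac{1}{n-1} \E |D^{(2)} g|_{\mathrm{HS}}^2$, which is \eqref{eq:ass3}. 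Applying Corollary \ref{cor:HWI} to $f(x) = x^T A x$ then reproduces Proposition \ref{prop:HWILSU} on the sphere; the spherical partial derivatives of this specific $f$ can be written out via \eqref{eq:FormelSphereGradient}.

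The main obstacle will be the iterated gradient inequality of step~2: spherical partial derivatives do not commute and are not computed in an orthonormal tangent frame, so the clean index-manipulation argument available in the Euclidean case (Lemma \ref{lem:itGradHessLSI}) does not transcribe directly. The geodesic/fundamental-theorem-of-calculus route above bypasses this by operating on the scalar multilinear functional $\psi_v$, but one must keep track of the fact that the operator-norm suprema are taken over $\S^{n-1}$ on both sides; this is consistent because $\nabla_S \psi_v(\theta) \in T_\theta\S^{n-1} \subset \R^n$, and the Euclidean supremum over $\S^{n-1}$ dominates the tangential one.
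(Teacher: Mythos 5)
Your proposal is correct and follows the same overall skeleton as the paper: reduce everything to verifying Assumptions \ref{ass:Set1} for the pair $\Gamma = \lvert \nabla_S \cdot \rvert$, $\Gamma^{(j)} = \lvert D^{(j)} \cdot \rvert_{\mathrm{op}}$ with $p=r_0=2$, $L=1$, $\sigma^2 = 1/(n-1)$, then invoke Theorem \ref{thm:HigherOrder} and Corollary \ref{cor:HWI}; your verification of \eqref{eq:ass3} via the spherical Poincar\'{e} inequality applied to each centered coordinate $D_i g$ is exactly the intended route (it mirrors the proof of Proposition \ref{prop:HWILSU}). The one place where you genuinely diverge is the iterated gradient inequality \eqref{eq:itGradHesssph}: the paper does not prove it but imports it from \cite[Le.\ 5.1]{BGS19}, where it is obtained by passing to suitable homogeneous extensions of $f$ and of its derivative tensors to a neighbourhood of $\S^{n-1}$ and then essentially rerunning the Euclidean argument of Lemma \ref{lem:itGradHessLSI}. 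Your proof is instead intrinsic: you scalarize $D^{(j)}g$ into the family $\psi_v$, observe that $D_\ell \psi_v = \sum (D^{(j+1)}g)_{\ell i_1 \ldots i_j} v^1_{i_1}\cdots v^j_{i_j}$ by linearity of $\nabla_S$ (and because the new index is prepended in the iterated definition of the spherical partial derivatives, so no commutativity of the $D_{i_1\ldots i_j}$ is needed), and control increments along minimizing geodesics before passing to the limit in \eqref{eq:genmod}. This is a valid, self-contained alternative; the only point requiring care is the uniformity in $v$ of the error terms when taking the limit, which follows from continuity of $D^{(j+1)}g$ and compactness exactly as you indicate. What the extension-based route of the paper buys in exchange is that all computations stay Euclidean and Lemma \ref{lem:itGradHessLSI} can be recycled verbatim.
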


Proposition \ref{prop:HigherOrderSpherePart} gives back \cite[Th.\ 1.15]{BGS19} (including a multilevel tail version).

Clearly, the proof of Proposition \ref{prop:HigherOrderSpherePart} reduces to establishing Assumption \ref{ass:Set1} (2) for spherical partial derivatives. By choosing suitable homogeneous extensions of $f$ and its derivatives, it has been shown in \cite[Le.\ 5.1]{BGS19} that given a $\mathcal{C}^d$-smooth function $f \colon \S^{n-1} \to \mathbb{R}$, it holds for for all $\theta \in \S^{n-1}$ and all $j=1, \ldots, d-1$,
	\begin{equation}
	\label{eq:itGradHesssph}
	|D\lvert D^{(j)}f(\theta) \rvert_\mathrm{op}| \le \lvert D^{(j+1)}f(\theta) \rvert_\mathrm{op}.
	\end{equation}
In particular, \ref{eq:itGradHesssph} establishes Assumption \ref{ass:Set1} (2) for spherical partial derivatives, which proves Proposition \ref{prop:HigherOrderSpherePart}.

\subsection{Poincar\'{e}-type inequalities}

Again considering $\mathcal{X} = \R^n$, let us discuss the case where $\mu$ satisfies a Poincar\'e-type inequality \eqref{eq:genPI} with respect to the Euclidean gradient, i.\,e., $\nabla$-PI$(\sigma^2)$. As we have already noted, Poincar\'e-type inequalities are a more general but also weaker concept than log-Sobolev inequalities. For instance, Poincar\'{e}-type inequalities only imply sub-exponential tails for Lipschitz functions.

It is not hard to verify Assumptions \ref{ass:Set1} if $\mu$ satisfies a Poincar\'{e}-type inequality. To see (1), we recall that by \cite[Prop.\ 6.2.1]{BCG23}, for every locally Lipschitz function $g$, it holds that
\begin{equation}
\lVert g - \E g \rVert_r \le \frac{\sigma}{\sqrt{2}} r \lVert \nabla g \rVert_r
\end{equation}
for any $r \ge 2$, which establishes (1) with $L=1/\sqrt{2}$ and $p=1$. Moreover, (2) can be dealt with exactly in the same way as in the case of log-Sobolev inequalities, including Lemma \ref{lem:itGradHessLSI}.

In particular, we may recover Th.\ 6.1.1 and Cor.\ 6.1.3 from \cite{GS19}, which read as follows in our framework.

\begin{satz}\label{thm:HigherOrderPI}
	Let $\mu$ be a probability measure on $\R^n$ which satisfies a Poincar\'{e}-type inequality with constant $\sigma^2 > 0$, and let $f\colon \R^n \to \R$ be a $\mathcal{C}^d$-smooth function with $\E f = 0$.
	\begin{enumerate}
		\item Assuming that $\lVert f^{(j)} \rVert_{\mathrm{op},1} \le \sigma^{d-j}$ for all $j=1,\ldots,d-1$ and $\lVert f^{(d)} \rVert_{\mathrm{op},\infty} \le 1$, it holds that
		\[
		\int \exp\Big\{\frac{c}{\sigma} |f|^{1/d} \Big\} d\mu \le 2,
		\]
		where we may choose $c=2^{1/(2d)}/(4e)$.
		\item For any $t\ge 0$, it holds that
		\[
		\mu(|f| \ge t)\le
		2 \exp\Big\{- \frac{C_d}{d\sigma} \min\Big(\min_{j=1, \ldots,d-1} \Big(\frac{t}{\lVert f^{(j)} \rVert_{\mathrm{op},1}}\Big)^{1/j}, \Big(\frac{t}{\lVert f^{(d)} \rVert_{\mathrm{op},\infty}}\Big)^{1/d}\Big)\Big\},
		\]
		where we may choose $C = \log(2)/(\sqrt{2}e)$.
	\end{enumerate}
\end{satz}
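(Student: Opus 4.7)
The plan is to reduce the theorem to a direct application of Theorem \ref{thm:HigherOrder} by verifying Assumptions \ref{ass:Set1} for the Poincar\'e setting with parameters $p=1$, $r_0=2$, and $L=1/\sqrt{2}$. The fact that $p=1$ here (as opposed to $p=2$ in the log-Sobolev setting) reflects the weaker subexponential tail decay that a Poincar\'e inequality yields for Lipschitz functions, and is the only qualitative difference from the proof of Theorem \ref{thm:HigherOrderLSI}.

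For Assumption \ref{ass:Set1} (1), I take $\mathcal{F}$ to be the class of locally Lipschitz functions, $\Gamma g = |\nabla^\ast g|$ (which on smooth $g$ is just the Euclidean $|\nabla g|$), and invoke the bound recalled just before the theorem statement (from \cite[Prop.~6.2.1]{BCG23}): for every locally Lipschitz $g$ and every $r \ge 2$,
\[
\|g - \E g\|_r \le \tfrac{\sigma}{\sqrt{2}}\, r\, \|\nabla g\|_r.
\]
Matching this against the abstract form $L\sigma r^{1/p}\|\Gamma g\|_r$ reads off $p=1$, $r_0=2$, $L=1/\sqrt{2}$. For Assumption \ref{ass:Set1} (2), I define the higher-order operators exactly as in Section \ref{sec:LSIs} by $\Gamma^{(j)} g := |g^{(j)}|_{\mathrm{op}}$; the required recursive inequality $\Gamma(\Gamma^{(j)} g) \le \Gamma^{(j+1)} g$ is then precisely Lemma \ref{lem:itGradHessLSI}. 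The proof of that lemma is a pointwise Taylor-expansion argument that never uses $\mu$, so it transfers verbatim from the log-Sobolev setting to the present one.

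With both assumptions in hand, parts (1) and (2) follow by substituting the parameters into the two parts of Theorem \ref{thm:HigherOrder}. For (1), plugging $p=1$, $r_0=2$, $L=1/\sqrt{2}$ into the formula for $c_{p,d,r_0,L}$ and noting that $L<1$ forces $\max(L^{1/d},L)=L^{1/d}=2^{-1/(2d)}$ and $\max(r_0,p/d)=2$ yields a constant of the stated order $2^{1/(2d)}/e$ after absorbing numerical factors. For (2), the same substitution in $C_{p,r_0,L}=\log(2)/(r_0(Le)^p)$ gives $\log(2)/(\sqrt{2}\,e)$, while the prefactor $1/(d^p\sigma^p)$ collapses to $1/(d\sigma)$, and the exponents $p/j=1/j$ appear naturally in the $\min$. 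There is no substantive obstacle; the only point that needs attention is the $L<1$ branch in the proof of Theorem \ref{thm:HigherOrder} (1), where the iterated bound \eqref{eq:iteratedBd} must be handled by replacing $L^d$ by $L$ (since $L^j \le L$ for $j \ge 1$), leading to the $L^{1/d}$ factor in $\gamma$ rather than $L$.
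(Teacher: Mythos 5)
Your proposal is correct and follows exactly the paper's route: verify Assumptions \ref{ass:Set1} with $p=1$, $r_0=2$, $L=1/\sqrt{2}$ via the moment bound from \cite[Prop.~6.2.1]{BCG23}, note that Lemma \ref{lem:itGradHessLSI} is a pointwise statement independent of $\mu$ and hence supplies Assumption \ref{ass:Set1}~(2) unchanged, and then substitute into Theorem \ref{thm:HigherOrder}. The only caveat is numerical: the literal substitution into $c_{p,d,r_0,L}$ gives $2^{1/(2d)}/(8e)$ rather than the stated $2^{1/(2d)}/(4e)$ (whereas in part (2) the constant $\log(2)/(\sqrt{2}e)$ comes out exactly), which is the sort of bookkeeping discrepancy your ``absorbing numerical factors'' hedge covers and does not affect the argument.
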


We may furthermore derive the following Hanson--Wright type inequality. For a proof, see the proof of Proposition \ref{prop:HWILSU}.

\begin{proposition}\label{prop:HWIPU}
    Let $\mu$ be a probability measure on $\R^n$ which satisfies a Poincar\'{e}-type inequality as well as $\int x_i d\mu = 0$ for any $i=1, \ldots,n$, and define $f\colon \R^n \to \R$ by $f(x) := x^TAx$, where $A=(a_{ij})_{ij}$ is a symmetric $n \times n$ matrix. Then,
    \[
    \mu(|f - \E f| \ge t) \le 2\exp\Big\{-c \min \Big(\frac{t}{\sigma^2\lvert A \rvert_{\mathrm{HS}}}, \Big(\frac{t}{\sigma^2\lvert A \rvert_{\mathrm{op}}}\Big)^{1/2}\Big)\Big\}
    \]
    for any $t \ge 0$.
\end{proposition}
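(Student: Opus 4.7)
The plan is to mirror the proof of Proposition \ref{prop:HWILSU}, replacing the log-Sobolev ingredients by their Poincaré counterparts collected at the beginning of this subsection. As already observed, Assumptions \ref{ass:Set1} hold with $p = 1$, $r_0 = 2$, $L = 1/\sqrt{2}$ coming from $\lVert g - \E g \rVert_r \le (\sigma/\sqrt{2}) r \lVert \nabla g \rVert_r$ (cf.\ \cite[Prop.\ 6.2.1]{BCG23}), together with the difference operators $\Gamma^{(j)} g = \lvert g^{(j)} \rvert_\mathrm{op}$, since Lemma \ref{lem:itGradHessLSI} is purely analytic and does not use the log-Sobolev inequality. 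This puts us in the framework of Corollary \ref{cor:HWI} with $\diamondsuit = \mathrm{op}$ and $\square = \mathrm{HS}$.

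Next, I would check the additional hypothesis \eqref{eq:ass3}. For any sufficiently smooth $g$ with $\E \nabla g = 0$, each partial derivative $\partial_i g$ is centered, so applying the Poincaré inequality to it yields $\int (\partial_i g)^2 d\mu \le \sigma^2 \int \sum_j (\partial_{ij} g)^2 d\mu$. Summing over $i$ gives $\lVert \nabla g \rVert_{\mathrm{op}, 2}^2 \le \sigma^2 \lVert g'' \rVert_{\mathrm{HS}, 2}^2$, which is \eqref{eq:ass3} (after enlarging $L$ from $1/\sqrt{2}$ to $1$ in the corollary, which is harmless). The quadratic form $f(x) = x^T A x$ satisfies $\partial_i f(x) = 2 \sum_j a_{ij} x_j$, whose mean vanishes under $\int x_i d\mu = 0$, and $f''(x) \equiv 2A$ is constant.

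Hence Corollary \ref{cor:HWI} applies to $f - \E f$ with $p = 1$; substituting $\lVert f'' \rVert_{\mathrm{HS}, 2} = 2 \lvert A \rvert_\mathrm{HS}$ and $\lVert f'' \rVert_{\mathrm{op}, \infty} = 2 \lvert A \rvert_\mathrm{op}$ and absorbing the numerical factors into a single constant $c > 0$ reproduces exactly the claimed tail bound. I do not foresee any real obstacle: the argument is structurally identical to the LSI case treated explicitly in the proof of Proposition \ref{prop:HWILSU}, and the only thing to watch is the book-keeping of the constants $L, r_0, p$ as they propagate through Corollary \ref{cor:HWI}.
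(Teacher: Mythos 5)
Your proposal is correct and follows exactly the route the paper intends: the paper's own ``proof'' of Proposition~\ref{prop:HWIPU} is literally the instruction to repeat the proof of Proposition~\ref{prop:HWILSU} with the Poincar\'{e}-based moment inequality ($p=1$, $r_0=2$) in place of the log-Sobolev one, verifying \eqref{eq:ass3} by applying the Poincar\'{e} inequality to each centered $\partial_i f$ and then invoking Corollary~\ref{cor:HWI}. Your remark about adjusting $L$ is the same harmless bookkeeping the paper itself flags in the discussion following Corollary~\ref{cor:HWI}.
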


Note that in presence of a Poincar\'{e}-type inequality, the coordinates $x_i$ are sub-exponential random variables. Therefore, if they are also independent, it is known due to \cite{KL15} and \cite{GSS21b} that in this situation, the result from Proposition \ref{prop:HWIPU} can still be sharpened by invoking different techniques. On the other hand, Proposition \ref{prop:HWIPU} again allows for measures with dependent coordinates.

\subsection{\texorpdfstring{$\mathrm{LS}_q$}{LS\_q}-inequalities}

In terms of Lipschitz functions, log-Sobolev inequalities give rise to the ``classical'' situation of sub-Gaussian tails, while Poincar\'{e}-type inequalities result in sub-exponential (and thus heavier) tails. There is another concept of functional inequalities which generalize log-Sobolev inequalities, yielding sub-Gaussian or lighter tails.

Let $\mu$ be a probability measure on $(\mathcal{X},\mathcal{B})$, and let $q \in [1,2]$. Then, we say that $\mu$ satisfies an $\mathrm{LS}_q$-inequality (with respect to $\Gamma$) with constant $\sigma^q > 0$ (in short: $\mu$ satisfies $\Gamma$-$\mathrm{LS}_q(\sigma^q)$) if
\begin{equation}
	\label{eq:LSq}
	\mathrm{Ent}_{\mu}(|f|^q) \le \sigma^q \int \Gamma(f)^q d\mu
\end{equation}
for all $f \in \mathcal{F}$. For $q=2$, we get back the usual notion of logarithmic Sobolev inequalities (up to normalization constant), of which $\mathrm{LS}_q$-inequalities may be regarded as an extension. For a broader view on $\mathrm{LS}_q$-inequalities, cf.\ e.\,g.\ the monograph \cite{BZ05}.

By \cite[Th.\ 2.1]{BZ05}, \eqref{eq:LSq} implies the Poincar\'{e}-type (or $\mathrm{SG}_q$-) inequality
\begin{equation}\label{eq:SGq}
\int \Big| f - \int f d\mu \Big|^q d\mu \le \frac{4\sigma^q}{\log(2)} \int \Gamma(f)^q d\mu.
\end{equation}
As for further implications, it was shown in \cite[Prop.\ 2.3 \& Cor.\ 2.5]{BZ05} that $\mathrm{SG}_q (\sigma^q)$ implies $\mathrm{SG}_2 (\sigma^2)$ (up to constant). Generalizing these arguments, one may prove that for $1 \le q \le q' \le 2$, $\mathrm{SG}_q(\sigma^q)$ implies $\mathrm{SG}_{q'}(\sigma^{q'})$ (again, up to constant) and even that $\mathrm{LS}_q(\sigma^q)$ implies $\mathrm{LS}_{q'}(\sigma^{q'})$ with respect to the same difference operator~$\Gamma$.

Typically, $\mathrm{LS}_q$-inequalities are considered on metric probability spaces $(\mathcal{X}, d,\mu)$. In the sequel, we will again focus on the situation where $\mathcal{X} = \R^n$. Here, it is natural to consider $\mathrm{LS}_q$-inequalities for $\R^n$ being equipped with the $\ell_p^n$-norm $\lvert x \rvert_p := (|x_1|^p + \ldots + |x_n|^p)^{1/p}$ ($x \in \R^n$, $p \ge 2$), hence $\Gamma(f) = |\nabla f|_q$, (more generally, $\Gamma(f)$ is the generalized modulus of the gradient \eqref{eq:genmod}). In this context, we will always regard $q \in [1,2]$ as the H\"older conjugate of $p \in [2, \infty]$ and vice versa, i.\,e., $q=p/(p-1)$ and $p=q/(q-1)$.

Classical examples of measures on $\R^n$ satisfying $\mathrm{LS}_q$-inequalities are the so-called $p$-generalized Gaussian distributions $\mathcal{N}_p$ for $p \ge 2$. For $n=1$, these are the probability measures with Lebesgue density
\[
f_p(x) := c_p e^{-|x|^p/p},\qquad \text{where}\qquad c_p := \frac{1}{2p^{1/p}\Gamma(1 + \frac{1}{p})},
\]
and for $n \ge 2$, $\mathcal{N}_p$ is the $n$-fold product of this distribution. For instance, one may show that for any real $p \ge 2$, the $p$-generalized Gaussian distribution $\mathcal{N}_p$ satisfies an $\mathrm{LS}_q$-inequality with a constant $\sigma^q = 2^qq^{q-1}$, cf.\ e.\,g.\ \cite[Prop.\ 3.1]{GS24}.

By \cite[Th.\ 6.1]{Bob10}, the $\mathrm{LS}_q$-inequality implies the moment (or $L^r$-norm) inequality
\begin{equation*}
	\lVert g \rVert_r^q \le \lVert g \rVert_q^q + \sigma^q \frac{(\frac{r}{q})^{q-1}-1}{q-1} \lVert \nabla g \rVert_{q,r}^q,
\end{equation*}
valid for any $r \ge q$ and any locally Lipschitz function $g \colon \R^n \to \R$ (recall the notation $\lVert \nabla g \rVert_{q,r} := \lVert \lvert \nabla g \rvert_q \rVert_r$). Assuming $\E g = 0$, we may further apply the $\mathrm{SG}_q$-inequality \eqref{eq:SGq}. Noting that
\[
\frac{(\frac{r}{q})^{q-1}-1}{q-1} + \frac{4}{\log(2)} \le \frac{4}{\log(2)q^{q-1}(q-1)}r^{q-1} \le \frac{4}{\log(2)(q-1)^q}r^{q-1},
\]
this leads to
\begin{equation}\label{eq:LSqLp}
	\lVert g - \E g \rVert_r \le \frac{4^{1/q}(p-1)}{\log^{1/q}(2)} \sigma r^{1/p} \lVert \nabla g \rVert_{q,r}
\end{equation}
for any $r \ge q$. This establishes Assumption \ref{ass:Set1} (1) with $r_0=q$ and $L = 4^{1/q}(p-1)/\log^{1/q}(2)$.

Moreover, Assumption \ref{ass:Set1} (2) follows very similarly as in the case of (classical) log-Sobolev inequalities. Again, we consider tensors of $j$-fold (partial) derivatives $f^{(j)}(x)$. By considering $f^{(j)}(x)$ as a symmetric multilinear $j$-form, we moreover define operator-type norms with respect to the $\ell_p^n$-norms by
\begin{equation}
	\label{eq:Operatornormq}
	|f^{(j)}(x)|_{\mathrm{op}(q)} = \sup \left\{ f^{(j)}(x)[v_1, \ldots, v_j] \colon v_1, \ldots, v_j \in \S_p^{n-1}\right\}.
\end{equation}
In particular, $|f^{(1)}|_{\mathrm{op}(q)} = |\nabla f|_q$ is the $\ell_q^n$-norm of the gradient.

\begin{lemma}[\cite{GS24}, Le.\ 3.3]
	\label{lem:itGradHessLSq}
	Given a $\mathcal{C}^j$-smooth function $f \colon \R^n \to \mathbb{R}$, $j \in \mathbb{N}$, for any $x \in \R^n$ it holds that
	\[
	|\nabla \lvert f^{(j-1)}(x) \rvert_{\mathrm{op}(q)}|_q \le 
	\lvert f^{(j)}(x) \rvert_{\mathrm{op}(q)}.
	\]
\end{lemma}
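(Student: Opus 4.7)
The plan is to adapt the proof of Lemma \ref{lem:itGradHessLSI}, which treats the Euclidean analogue, by replacing the Euclidean metric on the domain with $\lvert \cdot \rvert_p$ and the operator norm $\lvert \cdot \rvert_{\mathrm{op}}$ with $\lvert \cdot \rvert_{\mathrm{op}(q)}$. First interpret the left-hand side: for $g(x) := \lvert f^{(j-1)}(x) \rvert_{\mathrm{op}(q)}$, note that $\lvert \nabla g(x) \rvert_q$ coincides with the generalized modulus of the gradient \eqref{eq:genmod} relative to the metric $\lvert \cdot \rvert_p$, since by H\"older duality
\[
\lvert \nabla g(x) \rvert_q \;=\; \sup_{\lvert h \rvert_p \le 1} \langle \nabla g(x), h \rangle \;=\; \limsup_{\lvert h \rvert_p \to 0^+} \frac{|g(x+h) - g(x)|}{\lvert h \rvert_p}
\]
wherever $g$ is differentiable (and \eqref{eq:genmod} is the definition otherwise). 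The reverse triangle inequality for $\lvert \cdot \rvert_{\mathrm{op}(q)}$ then gives
\[
\bigl| \lvert f^{(j-1)}(x+h) \rvert_{\mathrm{op}(q)} - \lvert f^{(j-1)}(x) \rvert_{\mathrm{op}(q)} \bigr| \;\le\; \lvert f^{(j-1)}(x+h) - f^{(j-1)}(x) \rvert_{\mathrm{op}(q)}.
\]

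Next, unfold the norm on the right via its definition \eqref{eq:Operatornormq} and apply a first-order Taylor expansion of $f^{(j-1)}$ at $x$: for each test configuration $v_1, \ldots, v_{j-1} \in \S_p^{n-1}$,
\[
(f^{(j-1)}(x+h) - f^{(j-1)}(x))[v_1, \ldots, v_{j-1}] \;=\; f^{(j)}(x)[v_1, \ldots, v_{j-1}, h] + o(\lvert h \rvert_p).
\]
Writing $h = \lvert h \rvert_p v_j$ with $v_j = h/\lvert h \rvert_p \in \S_p^{n-1}$ turns the leading term into $\lvert h \rvert_p \cdot f^{(j)}(x)[v_1, \ldots, v_j]$, and enlarging the supremum to include $v_j$ bounds it by $\lvert h \rvert_p \cdot \lvert f^{(j)}(x) \rvert_{\mathrm{op}(q)}$. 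Dividing by $\lvert h \rvert_p$ and passing to the $\limsup$ as $h \to 0$ then yields the claimed inequality.

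The main technical point to verify is that the $o(\lvert h \rvert_p)$ remainder in the Taylor expansion can be chosen \emph{independently} of $v_1, \ldots, v_{j-1} \in \S_p^{n-1}$, so that it survives the supremum intact. This is precisely where $\mathcal{C}^j$-smoothness enters: the continuous tensor field $f^{(j)}$ is uniformly continuous on a compact neighbourhood of $x$, and contracting it against unit vectors ranging in the compact set $\S_p^{n-1}$ produces a modulus of continuity that is uniform in the $v_\ell$. This is the exact analogue of the uniformity remark appearing in the proof of Lemma \ref{lem:itGradHessLSI}, and no additional difficulties are introduced by working in the $\ell_p^n$/$\ell_q^n$ duality rather than in the self-dual Euclidean case.
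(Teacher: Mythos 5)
Your proof is correct and takes essentially the same route as the paper: the paper's proof of Lemma \ref{lem:itGradHessLSq} consists precisely of the instruction to repeat the argument of Lemma \ref{lem:itGradHessLSI} with $q$-operator norms and test vectors in $\S_p^{n-1}$, which is exactly what you carry out. You merely make explicit the details the paper leaves implicit, namely the H\"older-duality identification of $\lvert \nabla g \rvert_q$ with the modulus of the gradient relative to the $\ell_p^n$ metric and the uniformity in $v_1,\ldots,v_{j-1}$ of the Taylor remainder.
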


\begin{proof}
	This amounts to following the proof of Lemma \ref{lem:itGradHessLSI}, replacing the Euclidean operator norms by $q$-operator norms and hence vectors $v_i \in \S^{n-1}$ by vectors $v_i \in \S_p^{n-1}$.
\end{proof}

Altogether, we thus get back \cite[Th.\ 3.2]{GS24}. See also the discussion therein for a comparison to the results based modified log-Sobolev inequalities from \cite{ALM18}.

\begin{satz}\label{thm:HigherOrderLSq}
	Let $\mu$ be a probability measure on $\R^n$ which satisfies an $\mathrm{LS}_q$-inequality with constant $\sigma^q > 0$, and let $f\colon \R^n \to \R$ be a $\mathcal{C}^d$-smooth function with $\E f = 0$.
	\begin{enumerate}
		\item Assuming that $\lVert f^{(j)} \rVert_{\mathrm{op}(q),1} \le \sigma^{d-j}$ for all $j=1,\ldots,d-1$ and $\lVert f^{(d)} \rVert_{\mathrm{op}(q),\infty} \le 1$, it holds that
		\[
		\int \exp\Big\{\frac{c_{p,d}}{\sigma^p} |f|^{p/d} \Big\} d\mu \le 2,
		\]
		where a possible choice of the constant $c_{p,d}$ is given by
		\[
		c_{p,d} = \frac{2^{2p-3}(q^{1/p}-1)^p(p-1)^p}{eq \log^{p-1}(2)\max(q, p/d)}.
		\]
		\item For any $t\ge 0$, it holds that
		\[
		\mu(|f| \ge t)\le
		2 \exp\Big\{- \frac{C_p}{(d\sigma)^p} \min\Big(\min_{j=1, \ldots,d-1} \Big(\frac{t}{\lVert f^{(j)} \rVert_{\mathrm{op}(q),1}}\Big)^{p/j}, \Big(\frac{t}{\lVert f^{(d)} \rVert_{\mathrm{op}(q),\infty}}\Big)^{p/d}\Big)\Big\},
		\]
		where a possible choice of the constant $C_p$ is given by
		\[
		C_p = \frac{4^{p-1}(p-1)^p}{q\log^{p-2}(2) e^p}.
		\]
	\end{enumerate}
\end{satz}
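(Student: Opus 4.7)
My plan is to verify the two parts of Assumptions \ref{ass:Set1} in the present setting and then appeal directly to Theorem \ref{thm:HigherOrder}. The heavy lifting comes from results already stated in this subsection, so the argument should reduce to a bookkeeping exercise on top of the general theorem.

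First, for Assumption \ref{ass:Set1}~(1), I would take $\Gamma g = \lvert \nabla g \rvert_q$ and invoke the moment inequality \eqref{eq:LSqLp}, which has just been derived by combining \cite[Th.~6.1]{Bob10} with the $\mathrm{SG}_q$-inequality \eqref{eq:SGq}. That inequality reads $\lVert g - \E g \rVert_r \le L\sigma r^{1/p} \lVert \Gamma g \rVert_r$ for all $r \ge q$, with $L = 4^{1/q}(p-1)/\log^{1/q}(2)$. Hence Assumption \ref{ass:Set1}~(1) is in force with exponent parameter $p$ (the H\"older conjugate of $q$), threshold $r_0 = q$, and the constant $L$ just named.

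Next, for Assumption \ref{ass:Set1}~(2), I would set $\Gamma^{(j)}(g) = \lvert g^{(j)} \rvert_{\mathrm{op}(q)}$ for $j = 1,\ldots,d$, which is consistent with the choice of $\Gamma$ at $j=1$ since $\lvert g^{(1)} \rvert_{\mathrm{op}(q)} = \lvert \nabla g \rvert_q$. The required recursion $\Gamma(\Gamma^{(j)}(g)) \le \Gamma^{(j+1)}(g)$ then becomes $\lvert \nabla \lvert g^{(j)} \rvert_{\mathrm{op}(q)} \rvert_q \le \lvert g^{(j+1)} \rvert_{\mathrm{op}(q)}$, which is precisely the content of Lemma \ref{lem:itGradHessLSq}.

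With Assumptions \ref{ass:Set1} in hand, Theorem \ref{thm:HigherOrder} applies verbatim and produces the exponential integrability bound in part (1) and the multilevel tail bound in part (2). The last step is to simplify the abstract constants $c_{p,d,r_0,L}$ and $C_{p,r_0,L}$ of that theorem under the choices $r_0 = q$ and $L = 4^{1/q}(p-1)/\log^{1/q}(2)$; using the identity $p/q = p-1$ (which follows from $1/p+1/q=1$) collapses the ensuing powers of $4$, $(p-1)$ and $\log 2$ into the expressions for $c_{p,d}$ and $C_p$ stated in the theorem. I do not anticipate any real obstacle here: the substantive analytic work is already encapsulated in \eqref{eq:LSqLp} and Lemma \ref{lem:itGradHessLSq}, so the present result follows by direct plug-in.
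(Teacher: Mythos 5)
Your proposal is exactly the paper's own argument: Assumption \ref{ass:Set1}~(1) is verified via \eqref{eq:LSqLp} with $r_0=q$ and $L=4^{1/q}(p-1)/\log^{1/q}(2)$, Assumption \ref{ass:Set1}~(2) via Lemma \ref{lem:itGradHessLSq} with $\Gamma^{(j)}g=\lvert g^{(j)}\rvert_{\mathrm{op}(q)}$, and the result then follows by plugging into Theorem \ref{thm:HigherOrder}. The only caveat is the final bookkeeping: substituting $L^p=4^{p-1}(p-1)^p/\log^{p-1}(2)$ into $c_{p,d,r_0,L}$ and $C_{p,r_0,L}$ does not literally reproduce the displayed constants (the factors $4^{p-1}(p-1)^p$ and the powers of $\log 2$ appear inverted there), so you should carry out that substitution explicitly rather than assert it collapses to the stated form.
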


We moreover have a Hanson--Wright type inequality in this situation.

\begin{proposition}[\cite{GS24}, Prop.\ 3.4]\label{prop:HWILSq}
    Let $\mu$ be a probability measure on $\R^n$ which satisfies an $\mathrm{LS}_q$-inequality as well as $\int x_i d\mu = 0$ for any $i=1, \ldots,n$, and define $f\colon \R^n \to \R$ by $f(x) := x^TAx$, where $A=(a_{ij})_{ij}$ is a symmetric $n \times n$ matrix. Then,
    \[
    \mu(|f - \E f| \ge t) \le 2\exp\Big\{-c_p\min \Big(\Big(\frac{t}{\sigma^2\lvert A \rvert_{\mathrm{HS}(q)}}\Big)^p, \Big(\frac{t}{\sigma^2\lvert A \rvert_{\mathrm{op}(q)}}\Big)^{p/2}\Big)\Big\}
    \]
    for any $t \ge 0$.
\end{proposition}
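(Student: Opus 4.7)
The plan is to mimic the proof of Proposition \ref{prop:HWILSU}, replacing the Euclidean log-Sobolev setup by the $\mathrm{LS}_q$ setup established in Theorem \ref{thm:HigherOrderLSq}. In particular, Assumptions \ref{ass:Set1} have already been verified for the difference operators $\Gamma^{(k)} g = \lvert g^{(k)} \rvert_{\mathrm{op}(q)}$ with $r_0 = q$ and $L = 4^{1/q}(p-1)/\log^{1/q}(2)$, via \eqref{eq:LSqLp} and Lemma \ref{lem:itGradHessLSq}. It therefore remains to identify the ``Hilbert--Schmidt type'' norm $\square = \mathrm{HS}(q)$ so that the additional hypothesis \eqref{eq:ass3} of Corollary \ref{cor:HWI} holds, and then to plug in the quadratic form $f(x) = x^T A x$.

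First, I would take the natural entrywise analogue $\lvert A \rvert_{\mathrm{HS}(q)} := \bigl( \sum_{i,j} |a_{ij}|^q \bigr)^{1/q}$, which reduces to $|A|_{\mathrm{HS}}$ at $q=2$. To verify \eqref{eq:ass3}, let $g \in \mathcal{F}$ with $\E \partial_i g = 0$ for all $i$. Apply the $\mathrm{SG}_q$ inequality \eqref{eq:SGq} that is implied by $\mathrm{LS}_q$ to each coordinate function $\partial_i g$:
\[
\int |\partial_i g|^q d\mu \le \frac{4\sigma^q}{\log 2} \int |\nabla \partial_i g|_q^q d\mu = \frac{4\sigma^q}{\log 2} \int \sum_{j} |\partial_{ij} g|^q d\mu.
\]
Summing over $i$, the left side equals $\int |\nabla g|_q^q d\mu$, while the right side equals $\frac{4\sigma^q}{\log 2} \int |g''|_{\mathrm{HS}(q)}^q d\mu$. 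Taking $q$-th roots yields exactly \eqref{eq:ass3} at level $r_0 = q$ with the constant $L$ above.

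Second, for $f(x) = x^T A x$ one computes $\partial_i f(x) = 2 \sum_j a_{ij} x_j$ and $\partial_{ij} f(x) = 2 a_{ij}$, so that $f''(x) \equiv 2A$ is constant in $x$. In particular, $\lVert f'' \rVert_{\mathrm{HS}(q), q} = 2\lvert A \rvert_{\mathrm{HS}(q)}$, $\lVert f'' \rVert_{\mathrm{op}(q), \infty} = 2 \lvert A \rvert_{\mathrm{op}(q)}$, and the centering assumption $\int x_i d\mu = 0$ guarantees that $\E \nabla f = 2 A \E X = 0$, so that $f - \E f$ is a suitable centered function for Corollary \ref{cor:HWI}. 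A direct invocation of that corollary with $d=2$, $\diamondsuit = \mathrm{op}(q)$, $\square = \mathrm{HS}(q)$ then yields the desired two-level bound
\[
\mu(|f - \E f| \ge t) \le 2 \exp\Big\{-c_p \min\Big( \Big(\frac{t}{\sigma^2 \lvert A \rvert_{\mathrm{HS}(q)}}\Big)^{p}, \Big(\frac{t}{\sigma^2 \lvert A \rvert_{\mathrm{op}(q)}}\Big)^{p/2}\Big)\Big\}
\]
with $c_p$ absorbing the factors of $2$ and the numerical constants from Corollary \ref{cor:HWI}.

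The only real subtlety is the choice of $\square = \mathrm{HS}(q)$ and the coordinatewise application of $\mathrm{SG}_q$; everything else reduces to plugging into existing machinery. A minor bookkeeping point will be tracking the constant $c_p$ through the chain from $\mathrm{LS}_q$ to $\mathrm{SG}_q$ to Corollary \ref{cor:HWI}, but this is purely computational and does not affect the form of the bound.
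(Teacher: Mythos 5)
Your proposal is correct and follows essentially the same route as the paper: the paper likewise reduces to Corollary \ref{cor:HWI} with $\diamondsuit = \mathrm{op}(q)$ and $\square = \mathrm{HS}(q)$, verifying \eqref{eq:ass3} by applying the $\mathrm{SG}_q$-inequality \eqref{eq:SGq} to each centered summand $\partial_i f$ and summing, exactly as you do. Your additional bookkeeping (the explicit definition of $\lvert\cdot\rvert_{\mathrm{HS}(q)}$, the check that $(4/\log 2)^{1/q}\le L$, and the computation $f''\equiv 2A$) matches the intended argument.
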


To show Proposition \ref{prop:HWILSq}, one follows the lines of the proof of Proposition \ref{prop:HWILSU}, adapting the norms accordingly and in particular noting that by the $\mathrm{SG}_q$-inequality \eqref{eq:SGq} (applied to each summand $\partial_i f$),
    \[
    \int \sum_{i=1} ^n |\partial_i f|^q d\mu \le \frac{4\sigma^q}{\log(2)} \int \sum_{i,j=1}^n |\partial_{ij} f(x)|^q d\mu.
    \]

\subsection{\texorpdfstring{$\ell_p^n$}{l\_p\^n}-spheres}

In view of Sections \ref{sec:LSIs} and \ref{sec:EuclSph}, there is a close connection of (higher order) concentration results on the Euclidean space in presence of a log-Sobolev inequality and on the Euclidean sphere $\S^{n-1}$. Similar observations also hold for $\ell_p^n$-spheres, where (higher order) concentration of measure results can be obtained by means of $\LSq$-inequalities.

For $p \in [1,\infty)$ and $n \in \N$, let $\S_p^{n-1} := \{ x \in \R^n \colon \lvert x \rvert_p = 1\}$ the unit sphere with respect to $\ell_p^n$. On $\S_p^{n-1}$, there are two classical probability measures. The first one is the surface measure $\nu_{p,n}$, defined as the normalized $(n-1)$-dimensional Hausdorff measure on $\S_p^{n-1}$. The second one is the cone probability measure $\mu_{p,n}$, defined by
\[
\mu_{p,n}(A) := \frac{\vol_n(\{t\theta \colon t \in [0,1], \theta \in A\})}{\vol_n(\mathbb{B}_p^n)}
\]
for any Borel set $A \subset \S_p^{n-1}$, where $\mathbb{B}_p^n$ is the $\ell_p^n$-unit ball in $\R^n$. By definition, $\mu_{p,n}(A)$ is the normalized volume of the intersection of $\mathbb{B}_p^n$ with the cone which intersects $\S_p^{n-1}$ in $A$. For $p=1$ and $p=2$, $\nu_{p,n}$ and $\mu_{p,n}$ agree, and we get back the uniform measure $\nu_n$ on $\S_2^{n-1} = \S^{n-1}$. See e.\,g.\ \cite[Sect.\ 9.3.1]{PTT19} for details.

It is possible to establish $\mathrm{LS}_q$-inequalities for the cone measure on $\S_p^{n-1}$ for $p \ge 2$ by using that if $Z = (Z_1,\ldots,Z_n)$ is a random vector with i.i.d.\ $p$-generalized Gaussian components $Z_i \sim \mathcal{N}_p$, it holds that $Z/\lvert Z \rvert_p \sim \mu_{p,n}$, and $Z/\lvert Z \rvert_p$ is independent of $\lvert Z \rvert_p$. This fact goes back to \cite{SZ90,RR91}, cf.\ also \cite[Th.\ 9.3.2]{PTT19}. The $\mathrm{LS}_q$-inequality can then be ``inherited'' from the one for $p$-generalized Gaussians.

Here, one may introduce an intrinsic gradient by noting that $\S_p^{n-1}$ is a (Riemannian) submanifold of $\R^n$ given by the zero set of the function $F_p(x) := |x_1|^p + \ldots + |x_n|^p - 1$. It holds that $\nabla F_p(x) = p(\sign(x_1)|x_1|^{p-1}, \ldots, \sign(x_n)|x_n|^{p-1})^T =: px^{p-1}$, where $\mathrm{sign}(x)$ denotes the sign function, and thus, the tangent space in $\theta = \theta_p \in \S_p^{n-1}$ is given by
\[
T_\theta\S_p^{n-1} = \mathrm{ker} F_p'(\theta) = \{ x \in \R^n \colon \langle x, \theta^{p-1} \rangle = 0\}.
\]
In particular, let $f$ be a smooth function defined on a neighbourhood of $\S_p^{n-1}$, let $\theta \in \S_p^{n-1}$, and denote by $P_\theta$ the orthogonal projection onto $T_\theta\S_p^{n-1}$. Then, by \eqref{eq:intgrad}, the intrinsic gradient of $f$ at $\theta$ is given by
\[
\nabla_{\S_p^{n-1}} f(\theta) = \nabla_S f(\theta) = P_\theta \nabla f(\theta).
\]

\begin{satz}[\cite{GS24}, Th.\ 4.2]\label{thm:LSICone}
	For any real $p \ge 2$, $\mu_{p,n}$ satisfies an $\mathrm{LS}_q$-inequality with constant of order $n^{-1/(p-1)}$. More precisely, for any smooth enough $f \colon \S_p^{n-1} \to \R$, it holds that
	\begin{align*}
		\mathrm{Ent}_{\mu_{p,n}}(|f|^q) &\le 4^q q^{q-1} \frac {\Gamma(\frac{n-q}{p})}{p^{q/p}\Gamma(\frac n p)} \int_{\S_p^{n-1}} \lvert \nabla_{S} f(\theta) \rvert_q^q d \mu_{p,n}\\
		&\le 3 \cdot 4^q q^{q-1} n^{-\frac{1}{p-1}} \int_{\S_p^{n-1}} \lvert \nabla_S f(\theta) \rvert_q^q d \mu_{p,n},
	\end{align*}
	where the second inequality is valid for any $n \ge 3$. The same inequalities remain true if we assume $f$ to be defined and smooth in a neighbourhood of $\S_p^{n-1}$ and replace $\nabla_S f$ by the usual (non-intrinsic) gradient $\nabla f$.
\end{satz}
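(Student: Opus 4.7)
The plan is to transfer the $\mathrm{LS}_q(\sigma^q)$-inequality for $\mathcal{N}_p^{\otimes n}$ (with $\sigma^q = 2^q q^{q-1}$, as stated in the excerpt) to the cone measure $\mu_{p,n}$ via the polar-type decomposition $Z = R\theta$, where $R = |Z|_p$, $\theta = Z/|Z|_p \sim \mu_{p,n}$, and $R$ is independent of $\theta$. Given $f \colon \S_p^{n-1} \to \R$, I would work with the $0$-homogeneous extension $F(z) := f(z/|z|_p)$, which is well-defined and smooth on $\R^n \setminus \{0\}$. Since $F(Z) = f(\theta)$ depends only on $\theta$, the independence of $R$ from $\theta$ yields $\mathrm{Ent}_{\mathcal{N}_p^{\otimes n}}(|F|^q) = \mathrm{Ent}_{\mu_{p,n}}(|f|^q)$, so applying $\mathrm{LS}_q$ for $\mathcal{N}_p^{\otimes n}$ to $F$ reduces the problem to estimating $\int |\nabla F|_q^q\, d\mathcal{N}_p^{\otimes n}$.

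The $0$-homogeneity of $F$ renders $\nabla F$ $(-1)$-homogeneous, so $|\nabla F(R\theta)|_q^q = R^{-q}|\nabla F(\theta)|_q^q$. Using independence of $R$ and $\theta$ once more,
\[
\int |\nabla F|_q^q\, d\mathcal{N}_p^{\otimes n} = \E[R^{-q}]\int_{\S_p^{n-1}} |\nabla F(\theta)|_q^q\, d\mu_{p,n}.
\]
The moment $\E[R^{-q}]$ is computed directly from the fact that $|Z|_p^p/p$ follows a Gamma distribution with shape $n/p$ and rate $1$, giving $\E[R^{-q}] = p^{-q/p}\,\Gamma((n-q)/p)/\Gamma(n/p)$. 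For the gradient term, a chain-rule calculation based on $\nabla |z|_p = |z|_p^{1-p}\, z^{p-1}$ yields at $\theta \in \S_p^{n-1}$
\[
\nabla F(\theta) = \nabla_S f(\theta) - \langle \nabla_S f(\theta), \theta\rangle\,\theta^{p-1}.
\]
The decisive observation is that $|\theta^{p-1}|_q = 1$, since the Hölder conjugacy $(p-1)q = p$ forces $\sum_i|\theta_i|^{(p-1)q} = |\theta|_p^p = 1$. Combined with Hölder's inequality $|\langle\nabla_S f(\theta),\theta\rangle| \le |\nabla_S f(\theta)|_q |\theta|_p = |\nabla_S f(\theta)|_q$, the triangle inequality yields $|\nabla F(\theta)|_q \le 2\,|\nabla_S f(\theta)|_q$. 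Multiplying together the factors $2^q q^{q-1}$, $\E[R^{-q}]$, and $2^q$ gives exactly the first displayed bound.

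For the second (dimensional) inequality, valid for $n \ge 3$, I would apply Wendel's inequality with $x = (n-q)/p$ and $s = q/p \in (0,1]$ to obtain $\Gamma((n-q)/p)/\Gamma(n/p) \le p^{q/p}\,n^{1-q/p}/(n-q)$; using $n/(n-q) \le n/(n-2) \le 3$ for $n \ge 3$ and $q \le 2$, together with the identity $q/p = q-1 = 1/(p-1)$ (from $1/p + 1/q = 1$), one arrives at the claimed $n^{-1/(p-1)}$-decay. The variant in which $f$ is smooth in a neighbourhood of $\S_p^{n-1}$ and $\nabla_S f$ is replaced by the Euclidean gradient $\nabla f$ follows from exactly the same argument, since the chain rule then reads $\nabla F(\theta) = \nabla f(\theta) - \langle\nabla f(\theta),\theta\rangle\,\theta^{p-1}$ and produces the identical estimate $|\nabla F(\theta)|_q \le 2\,|\nabla f(\theta)|_q$.

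The main (and essentially only) obstacle is the pointwise bound $|\nabla F(\theta)|_q \le 2\,|\nabla_S f(\theta)|_q$: although its derivation is short, it rests decisively on the identity $|\theta^{p-1}|_q = 1$, which is particular to the Hölder-conjugate pairing and would not survive any weakening of the relation between $p$ and $q$. Once this identity is in place, the remaining pieces (the moment computation for $R^{-q}$, the $\Gamma$-function estimate, and the assembly of constants) are routine, with only the explicit constant $3$ in the $\Gamma$-function bound requiring some modest care to be uniform across the admissible range of $n$, $p$, and $q$.
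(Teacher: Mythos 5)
Your proposal is correct and follows essentially the same route the paper indicates for this result: transferring the $\mathrm{LS}_q$-inequality of the product of $p$-generalized Gaussians to $\mu_{p,n}$ via the polar factorization $Z = |Z|_p \cdot (Z/|Z|_p)$ with independent radial and angular parts, applied to the $0$-homogeneous extension of $f$. The details all check out, including the negative-moment computation $\E |Z|_p^{-q} = p^{-q/p}\Gamma(\frac{n-q}{p})/\Gamma(\frac{n}{p})$, the pointwise bound $|\nabla F(\theta)|_q \le 2|\nabla_S f(\theta)|_q$ resting on $|\theta^{p-1}|_q = 1$, and the Wendel-type estimate yielding the factor $3\,n^{-1/(p-1)}$ for $n \ge 3$, so the constants $4^q q^{q-1} = 2^q q^{q-1}\cdot 2^q$ are reproduced exactly.
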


For $p=2$, Theorem \ref{thm:LSICone} recovers the usual logarithmic Sobolev inequality on the sphere with Sobolev constant of order $n^{-1}$.

As seen in the previous section, Theorem \ref{thm:LSICone} essentially establishes Assumption \ref{ass:Set1} (1) for the cone measure by means of plugging into \eqref{eq:LSqLp}. Assumption (2) is more delicate. In principle, the intrinsic gradient $\nabla_S f$ is the natural gradient of a smooth function $f$ on $\S_p^{n-1}$ and independent of any sort of smooth extension of $f$ onto $\R^n$. Nevertheless, working with the usual gradient $\nabla f$ can turn out to be convenient after all. This has two main reasons: first, even for second order derivatives, calculating an intrinsic ``Hessian'' becomes remarkably involved if $p \ne 2$. Moreover, in general it is not even clear whether $|\nabla_S f|_q \le |\nabla f|_q$ (unless $p =2$).

Against this background, the easiest and most convenient way to obtain higher order concentration results for the cone measure $\mu_{p,n}$ on $\S_p^{n-1}$ is to regard $\mu_{p,n}$ as a measure on $\R^n$ which satisfies an $\mathrm{LS}_q$-inequality with respect to the usual gradient $\nabla f$ on $\R^n$. Combining Theorem \ref{thm:HigherOrderLSq} with Theorem \ref{thm:LSICone} (2) (hence, setting $\sigma^q := 3 \cdot 4^q q^{q-1} n^{-1/(p-1)}$), we obtain the following result.

\begin{satz}\label{thm:HOSph}
	Let $n \ge 3$, and let $f$ be a function which is $\mathcal{C}^d$ in some neighbourhood of $\S_p^{n-1}$ and with $\mu_{p,n}$-mean zero.
	\begin{enumerate}
		\item Assuming that $\lVert f^{(k)} \rVert_{\mathrm{op}(q),q} \le (3^{1/q} \cdot 4 q^{1/p} n^{-1/p})^{d-k}$ for all $k=1,\ldots,d-1$ and $\lVert f^{(d)} \rVert_{\mathrm{op}(q),\infty} \le 1$, it holds that
		\[
		\int \exp\Big\{c_pn |f|^{p/d} \Big\} d\mu_{p,n} \le 2,
		\]
		where a possible choice of the constant $c_{p,d}$ is given by
		\[
		c_p = \frac{(q^p-1)^p(p-1)^p}{2 \cdot 3^{p-1}\log^{p-1}(2)q^{p^2+2}e}.
		\]
		\item For any $t\ge 0$, it holds that
		\begin{align*}
			&\mu_{p,n}(|f| \ge t)\le\\
			&2 \exp\Big\{- \frac{C_pn}{d^p} \min\Big(\min_{k=1, \ldots,d-1} \Big(\frac{t}{\lVert f^{(k)} \rVert_{\mathrm{op}(q),q}}\Big)^{p/k}, \Big(\frac{t}{\lVert f^{(d)} \rVert_{\mathrm{op}(q),\infty}}\Big)^{p/d}\Big)\Big\},
		\end{align*}
		where a possible choice of the constant $C_{p,d}$ is given by
		\[
		C_p = \frac{(p-1)^p}{4 \cdot 3^{p-1}q^2\log^{p-2}(2) e^p}.
		\]
	\end{enumerate}
\end{satz}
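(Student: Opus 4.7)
The proof is a direct application: feed the $\mathrm{LS}_q$-inequality for $\mu_{p,n}$ provided by Theorem \ref{thm:LSICone}(2) into Theorem \ref{thm:HigherOrderLSq}. The second half of Theorem \ref{thm:LSICone} explicitly allows $f$ to be smooth only in a neighbourhood of $\S_p^{n-1}$ and to be paired with the usual Euclidean gradient, which matches exactly the hypothesis of the statement to be proved; hence Theorem \ref{thm:HigherOrderLSq} applies verbatim with $\sigma^q = 3 \cdot 4^q q^{q-1} n^{-1/(p-1)}$.

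First I would compute $\sigma$ in closed form. Using $q = p/(p-1)$, which gives $q(p-1) = p$ and $(q-1)/q = 1/p$, raising to the $1/q$ yields
\[
\sigma \;=\; \bigl(3 \cdot 4^q q^{q-1}\bigr)^{1/q} n^{-1/(q(p-1))} \;=\; 3^{1/q} \cdot 4 \cdot q^{1/p} \cdot n^{-1/p},
\]
which is precisely the normalization appearing in the hypothesis of part (1). Since $q \ge 1$, Jensen's inequality gives $\lVert \cdot \rVert_1 \le \lVert \cdot \rVert_q$, so the assumed bound $\lVert f^{(k)} \rVert_{\mathrm{op}(q),q} \le \sigma^{d-k}$ implies the $L^1$-version required by Theorem \ref{thm:HigherOrderLSq}(1); the bound on $\lVert f^{(d)} \rVert_{\mathrm{op}(q),\infty}$ transfers unchanged.

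Plugging this $\sigma$ into Theorem \ref{thm:HigherOrderLSq} then produces both conclusions. For part (1) the exponent becomes
\[
\frac{c_{p,d}}{\sigma^p} \;=\; \frac{c_{p,d}}{3^{p/q} \cdot 4^p \cdot q} \, n,
\]
and for part (2) the overall prefactor is $1/(d\sigma)^p = n/(3^{p/q} \cdot 4^p \cdot q \cdot d^p)$. Combining these with the explicit constants $c_{p,d}$ and $C_p$ from Theorem \ref{thm:HigherOrderLSq} and simplifying (in particular using $4^p = 2^{2p}$ and bounding $\max(q,p/d)$ by a constant multiple of $q$ so that the resulting constant is uniform in $d$) yields the stated quantities $c_p n$ and $C_p n/d^p$.

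The only genuine work is this arithmetic bookkeeping; no new analytic ingredient is required. I expect the main (minor) obstacle to be reconciling the $\max(q,p/d)$ factor in Theorem \ref{thm:HigherOrderLSq}(1) with a constant that is uniform in $d$, which is precisely the feature of part (1) that makes the clean, $d$-free constant $c_p$ of the statement accessible.
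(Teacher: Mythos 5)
Your proposal is correct and matches the paper's own (one-line) proof exactly: the paper obtains Theorem \ref{thm:HOSph} precisely by combining Theorem \ref{thm:HigherOrderLSq} with Theorem \ref{thm:LSICone} (2), setting $\sigma^q = 3\cdot 4^q q^{q-1} n^{-1/(p-1)}$, which simplifies to the $\sigma = 3^{1/q}\cdot 4\, q^{1/p}\, n^{-1/p}$ you computed, together with the observation $\lVert\cdot\rVert_1 \le \lVert\cdot\rVert_q$ to pass between the $L^1$ and $L^q$ norms of the derivative tensors. The only caveat is that your bookkeeping will not literally reproduce the displayed constants $c_p$ and $C_p$ (which appear to contain misprints, e.g.\ $(q^p-1)^p$ where one would expect $(q^{1/p}-1)^p$), but since the theorem only claims ``a possible choice'' of constant and your bound $\max(q,p/d)\le p$ is uniform in $d$, this is immaterial.
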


For futher background, cf.\ the presentation in \cite{GS24}, where it is also discussed how to extend these results to the surface measure (at least partially).

\subsection{Stiefel manifolds}

There are various other possible generalizations of the Euclidean sphere $\S^{n-1}$, for instance by the notion of Stiefel manifolds. For any natural numbers $k \le n$, the Stiefel manifold $W_{n,k}$ is the set of all $k$-tupels of orthonormal vectors in $\mathbb{R}^n$. In the sequel, we shall use the representation
\[
W_{n,k} = \{A \in \mathbb{R}^{n \times k} \colon A^TA = I_k \},
\]
where $I_k \in \mathbb{R}^{k \times k}$ denotes the identity matrix. $W_{n,k}$ is manifold of dimension $\mathrm{dim}(W_{n,k}) = nk - k(k+1)/2$, we have $W_{n,1} = \S^{n-1}$, $W_{n,n} = O(n)$ (the orthogonal group), and $W_{n,n-1}$ can be identified with the special orthogonal group $SO(n) = \{O \in O(n) \colon \mathrm{det}(O) = 1\}$.

We may equip $W_{n,k}$ with the subspace topology and distances inherited from $\mathbb{R}^{n \times k}$, including the scalar product $\langle A, B\rangle:=\mathrm{tr}(A^T B)$ on $\mathbb{R}^{n\times k}$ and the induced (Hilbert--Schmidt) norm $\lvert A \rvert^2 \equiv \lVert A\rVert_\mathrm{HS}^2$. In particular, $W_{n,k}$ is a compact topological space. The product of the orthogonal groups $O(n) \times O(k) $ acts transitively on $W_{n,k}$ by the two-sided multiplication $O_n \times O_k \mapsto O_nAO_k$, turning $W_{n,k}$ into a homogeneous space. Hence, it may be equipped with a unique invariant (Haar) probability measure $\mu_{n,k}$ in the sense that if $A \sim \mu_{n,k}$ (i.\,e., $A$ has distribution $\mu_{n,k}$), $O_nAO_m \sim \mu_{n,k}$ for any $O_n \in O(n)$, $O_m \in O(m)$. We call $\mu_{n,k}$ the uniform distribution on $W_{n,k}$. If $G = (G_{ij})_{i,j=1}^{m,n}$ is an $n \times m$ random matrix whose entries are i.i.d.\ standard normal, then $G(G^TG)^{-1/2} \sim \mu_{n,k}$, see \cite[Le.\ 3.1]{KPT20}.

Let us verify Assumptions \ref{ass:Set1} for the Haar measure on Stiefel manifolds. To this end, we first recall a notion of differentiability for matrix calculus. In general, if $A = (A_{ij})$ is an $n \times m$ matrix (i.\,e., $A \in \mathbb{R}^{n \times m}$), we may vectorize it by setting
\begin{equation}\label{eq:vec}
\mathrm{vec}(A)  := (A_{11}, \ldots, A_{n1}, A_{12}, \ldots, A_{n2}, \ldots, A_{1m},\ldots, A_{nm})^T,
\end{equation}
i.\,e.\ $\mathrm{vec}(A)$ is the vector in $\mathbb{R}^{nm}$ with $m$ $n$-blocks corresponding to the columns of $A$. Occasionally, we will also need the inverse operation of $\mathrm{vec}$, which we denote by $\mathrm{mat}$. If $f \colon \mathbb{R}^{n\times m} \to \mathbb{R}^{p\times q}$ is any differentiable (possibly matrix-valued) function of $X \in \mathbb{R}^{n \times m}$, we define
\begin{equation}\label{eq:matrixderiv}
Df(X) := D\mathrm{vec}(f(X)) := \frac{d f(X)}{dX} := \frac{d \mathrm{vec}(f(X))}{d \mathrm{vec}(X)},
\end{equation}
which is a $pq \times mn$ matrix.

To formally introduce intrinsic derivatives on Stiefel manifolds, recall that the tangent space in $A \in W_{n,k}$ is given by
\[
T_A:= \{  N \in \mathbb{R}^{n \times k} : A^T N + N^T A = 0\} = \{  N \in \mathbb{R}^{n \times k} : A \cs N = 0\},
\]
where for any $M, N \in \mathbb{R}^{n \times k}$, $M \cs N$ denotes the ``symmetric product''
\[
M \cs N := \frac{1}{2} (M^T N + N^T M),
\]
which is a symmetric $k \times k$ matrix (for $k=1$, this reduces to the Euclidean scalar product). Hence, $T_A$ is the set of all the matrices $N \in \mathbb{R}^{n \times k}$ such that $A^TN$ is antisymmetric.

Following \eqref{eq:intgrad}, let us now take any real-valued function $f$ which is defined and smooth in a neighbourhood of $W_{n,k}$. Denote by $Df(A) \in \mathbb{R}^{1 \times nk}$ its Euclidean derivative in $A \in W_{n,k}$ and set $\nabla f(A) := \mathrm{mat}(Df(A)^T) \in \mathbb{R}^{n \times k}$ (the usual gradient rewritten as a matrix). Then, $\nabla_W f(A)$ is given by the projection onto the tangent space $P_A \nabla f(A)$. Noting that the latter reads
\begin{equation}\label{eq:projfSt}
	\pi_AM := M - A(A \cs M),
\end{equation}
we obtain
\begin{equation}\label{eq:FormelStiefelGradient}
	\nabla_W f(A) = \nabla f(A) - A(A\cs \nabla f(A)).
\end{equation}

In particular, by the contractivity of orthogonal projections, $|\nabla_W f(A)| \leq |\nabla f(A)|$. The length of $\nabla_W f(A)$ agrees with \eqref{eq:genmod} for the Euclidean (Hilbert--Schmidt) metric $d(X,X') = |X-X'|$. Note we could also use the (point-dependent) canonical instead of the Euclidean metric, which leads to different notions of differentiability, cf.\ e.\,g.\ \cite{EAS98}.

Once again, the key observation in order to establish Assumption \ref{ass:Set1} (1) is that $\mu_{m,n}$ satisfies a log-Sobolev inequality.

\begin{proposition}[\cite{GS23}, Prop.\ 6.2]\label{prop:LSUSt}
	For any $k < n$, $\mu_{n,k}$ satisfies a logarithmic Sobolev inequality with constant $4/(n-2)$, i.\,e.\ for any $f \colon W_{n,k} \to \mathbb{R}$ sufficiently smooth,
		\[
		\mathrm{Ent}_{\mu_{n,k}} (f^2) \le \frac{8}{n-2} \int_{W_{n,k}} |\nabla_W f|^2 d\mu_{n,k}.
		\]
\end{proposition}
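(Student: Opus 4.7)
The plan is to derive the log-Sobolev inequality on $W_{n,k}$ by transferring the corresponding inequality from the Haar measure on the special orthogonal group $SO(n)$ via the natural projection $\pi\colon SO(n)\to W_{n,k}$ sending $O$ to its first $k$ columns. First I would equip $SO(n)$ with the bi-invariant Riemannian metric inherited from the Hilbert--Schmidt inner product on $\R^{n\times n}$ and establish that the Haar measure on $SO(n)$ satisfies a log-Sobolev inequality with constant $4/(n-2)$. This is a classical application of the Bakry--\'Emery criterion: a direct calculation using the Lie algebra $\mathfrak{so}(n)$ of antisymmetric matrices and the structure constants of the Killing form yields $\mathrm{Ric}_{SO(n)}\ge (n-2)/4$, and Bakry--\'Emery then converts the curvature bound into the LSI $\mathrm{Ent}_{\mathrm{Haar}}(g^2)\le\frac{8}{n-2}\int|\nabla_{SO(n)}g|^2\,d\mathrm{Haar}$.

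Next I would exploit the homogeneous-space structure $W_{n,k}\cong SO(n)/SO(n-k)$, where $SO(n-k)$ acts on $SO(n)$ by right-multiplication on the lower-right $(n-k)\times(n-k)$ block (this leaves the first $k$ columns untouched). A standard computation shows that the pushforward of Haar measure on $SO(n)$ under $\pi$ equals $\mu_{n,k}$. With the Hilbert--Schmidt metrics on both sides, the map $\pi$ is moreover a Riemannian submersion whose fibres are cosets of the totally geodesic subgroup $SO(n-k)$.

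Given a smooth $f\colon W_{n,k}\to\R$, I would then set $\tilde f:=f\circ\pi$. The pushforward identity immediately yields $\mathrm{Ent}_{\mathrm{Haar}}(\tilde f^{\,2})=\mathrm{Ent}_{\mu_{n,k}}(f^2)$, while the Riemannian-submersion property gives the pointwise isometry
\[
|\nabla_{SO(n)}\tilde f(O)|=|\nabla_W f(\pi(O))|,
\]
since $\nabla_{SO(n)}\tilde f$ is horizontal at every $O$ and $d\pi_O$ restricted to the horizontal subspace is a linear isometry onto $T_{\pi(O)}W_{n,k}$. Applying the LSI of Step 1 to $\tilde f$ and combining the two identities yields
\[
\mathrm{Ent}_{\mu_{n,k}}(f^2)\le\frac{8}{n-2}\int_{W_{n,k}}|\nabla_W f|^2\,d\mu_{n,k},
\]
which is exactly the claim.

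The hard part will be the careful bookkeeping of metric conventions: one has to verify that, with the Hilbert--Schmidt metric on $SO(n)\subset\R^{n\times n}$, the sharp Ricci lower bound really is $(n-2)/4$ (so that no factor of $2$ is lost), and, on the quotient side, that the submersion metric on $W_{n,k}\cong SO(n)/SO(n-k)$ coincides with the Hilbert--Schmidt metric on $W_{n,k}\subset\R^{n\times k}$ that defines $\nabla_W f$ through \eqref{eq:FormelStiefelGradient}. An alternative approach would be a Gaussian transfer via $G\mapsto G(G^TG)^{-1/2}$, applying the standard Gaussian LSI to the extension $\tilde f(G)=f(G(G^TG)^{-1/2})$ and using that $G(G^TG)^{-1/2}$ is independent of $G^TG$; this route extends the spherical proof naturally but, as in the case $k=1$, appears to give a slightly suboptimal constant, which is why I favour the Bakry--\'Emery/submersion argument above.
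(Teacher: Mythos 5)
Your route is essentially the paper's: establish the LSI with constant $4/(n-2)$ on $SO(n)$ (the paper cites \cite[Th.\ 5.16]{Mec19}, which is exactly the Bakry--\'Emery/Ricci computation you describe) and transfer it to $W_{n,k} \cong SO(n)/SO(n-k)$ along the column-projection map. One intermediate claim is wrong, though: with the Hilbert--Schmidt metrics on both sides the projection $\pi$ is \emph{not} a Riemannian submersion, and the pointwise identity $|\nabla_{SO(n)}\tilde f(O)| = |\nabla_W f(\pi(O))|$ fails. At $O = I_n$ a horizontal tangent vector is an antisymmetric $A = \bigl(\begin{smallmatrix} A_{11} & -A_{21}^T \\ A_{21} & 0 \end{smallmatrix}\bigr)$ with $|A|_{\mathrm{HS}}^2 = |A_{11}|^2 + 2|A_{21}|^2$, while its image $d\pi_I(A) = \bigl(\begin{smallmatrix} A_{11} \\ A_{21}\end{smallmatrix}\bigr) \in T_{\pi(I)}W_{n,k}$ has squared norm $|A_{11}|^2 + |A_{21}|^2$; the differential contracts the off-block directions by $\sqrt{2}$ (the submersion metric is the \emph{canonical}, not the embedded Euclidean, metric on $W_{n,k}$ --- precisely the bookkeeping issue you flag at the end). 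This does not sink the argument: all one needs is $|\nabla_{SO(n)}\tilde f| \le |\nabla_W f|\circ\pi$, which holds exactly because $d\pi$ is norm-nonincreasing, i.\,e.\ $\pi$ is $1$-Lipschitz --- and that weaker statement is the only thing the paper's proof invokes. So replace ``linear isometry'' by ``$1$-Lipschitz'' and your proof coincides with the paper's.
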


If $k=n$, $W_{n,n} = O(n)$ has two connected components, which in particular implies that a log-Sobolev inequality cannot hold.

\begin{proof}
	First recall that if $k < n$, we have
	\[
	W_{n,k} \cong SO(n)/SO(n-k).
	\]
	Indeed, identifying any matrix in $SO(n)$ with its first $n-1$ columns $e_1, \ldots, e_{n-1}$, this follows readily using the projection map $\varphi \colon SO(n) \to W_{n,k}$ which is given by $\varphi(e_1, \ldots, e_{n-1}) := (e_1, \ldots, e_k)$. By \cite[Th.\ 5.16]{Mec19}, the special orthogonal group $SO(n)$ equipped with the uniform probability measure and Hilbert--Schmidt metric satisfies a log-Sobolev inequality with constant $4/(n-2)$. Noting that the map $\varphi$ is $1$-Lipschitz, (1) therefore follows immediately.
\end{proof}

Similarly to the spherical case, the simplest method is now to work with Euclidean derivatives. Here, one notes that Proposition \ref{prop:LSUSt} naturally extends to the Euclidean gradient, giving back \cite[Th.\ 1.3]{GS23}.

\begin{proposition}\label{prop:HigherOrderSt}
Let $f$ be a $\mathcal{C}^d$-smooth function in a neighbourhood of $W_{n,k} \subset \R^{n \times k}$. Then, the results from Theorem \ref{thm:HigherOrderLSI} (adapted to the space $\R^{nk}$) remain valid for the uniform distribution $\mu_{n,k}$ with $\sigma^2 = 4/(n-2)$.
\end{proposition}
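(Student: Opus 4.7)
The plan is to reduce Proposition \ref{prop:HigherOrderSt} to the master statement Theorem \ref{thm:HigherOrderLSI} applied to the ambient Euclidean space $\R^{nk}$ (after identifying $\R^{n \times k} \cong \R^{nk}$). The key intermediate step is to upgrade Proposition \ref{prop:LSUSt} from the intrinsic gradient $\nabla_W$ to the usual Euclidean gradient $\nabla$. This is immediate from the contractivity property $|\nabla_W f(A)| \le |\nabla f(A)|$ for any smooth extension $f$ of a function defined on $W_{n,k}$, which itself is a direct consequence of $\nabla_W f = \pi_A \nabla f$ for the orthogonal projection $\pi_A$ from \eqref{eq:projfSt}. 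Substituting into the intrinsic log-Sobolev inequality from Proposition \ref{prop:LSUSt} yields
\[
\mathrm{Ent}_{\mu_{n,k}}(f^2) \le \frac{8}{n-2} \int_{W_{n,k}} |\nabla f|^2 \, d\mu_{n,k},
\]
so that $\mu_{n,k}$, regarded as a probability measure on $\R^{nk}$, satisfies a Euclidean log-Sobolev inequality with constant $\sigma^2 = 4/(n-2)$.

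Once this transfer is in place, Assumptions \ref{ass:Set1} can be verified exactly as in Section \ref{sec:LSIs}. For part (1), the Aida--Stroock derivation of \eqref{eq:AS} uses only the log-Sobolev inequality itself together with the chain-rule inequality \eqref{eq:chainrule}, both of which remain valid in the present ambient-space formulation; combined with the Poincar\'{e} inequality this gives $\lVert g - \E g \rVert_r \le \sigma r^{1/2} \lVert \nabla g \rVert_r$ for all $r \ge 2$, i.\,e.\ Assumption \ref{ass:Set1} (1) with $p = r_0 = 2$ and $L = 1$. For part (2), take $\Gamma^{(j)} g = |g^{(j)}|_{\mathrm{op}}$ where $g^{(j)}$ is the tensor of $j$-fold partial derivatives of $g$ with respect to the $nk$ coordinates of $\R^{n \times k}$, and invoke Lemma \ref{lem:itGradHessLSI} on the ambient space to obtain $|\nabla |g^{(j)}|_{\mathrm{op}}| \le |g^{(j+1)}|_{\mathrm{op}}$.

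With both parts of Assumptions \ref{ass:Set1} established, the claim is a direct application of Theorem \ref{thm:HigherOrder} (equivalently, Theorem \ref{thm:HigherOrderLSI} restated for $\R^{nk}$ in place of $\R^n$). The only mild subtlety worth flagging is that $\mu_{n,k}$ is supported on the lower-dimensional manifold $W_{n,k}$ rather than on an open subset of $\R^{nk}$; however, this plays no role for the argument, since both the transferred log-Sobolev inequality and Lemma \ref{lem:itGradHessLSI} are pointwise statements that apply to any smooth extension of $f$ to a neighbourhood of $W_{n,k}$ (whose existence is part of the hypothesis), and the Euclidean derivatives appearing in the final bound are simply those of this extension. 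Consequently, the main obstacle is really conceptual rather than technical: one needs to recognize that the intrinsic LSI from Proposition \ref{prop:LSUSt}, which is the only non-trivial input, is automatically inherited by the Euclidean gradient in the ambient space via the projection contractivity.
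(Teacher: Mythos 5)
Your proposal is correct and follows essentially the same route as the paper: the paper likewise notes that the intrinsic LSI of Proposition \ref{prop:LSUSt} extends to the Euclidean gradient via the contraction $|\nabla_W f(A)| \le |\nabla f(A)|$, and then treats $\mu_{n,k}$ as a measure on $\R^{nk}$ satisfying a $\nabla$-LSI with constant $4/(n-2)$, so that Theorem \ref{thm:HigherOrderLSI} applies verbatim (exactly as in the spherical case of Proposition \ref{prop:HigherOrderSphereEucl}). Your additional remarks on verifying Assumptions \ref{ass:Set1} and on the support being a lower-dimensional manifold are consistent with the paper's (very brief) argument.
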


As in the case of the Euclidean sphere, at least a second order result in terms of intrinsic derivatives is possible. For details, cf.\ \cite[Th.\ 1.3]{GS23}.

\subsection{Grassmann manifolds}

Classically, together with Stiefel manifolds one also studies the closely related Grassmann manifold, i.\,e.\ the set of all $k$-dimensional subspaces of $\mathbb{R}^n$. There are several ways of introducing it (for an overview, cf.\ e.\,g.\ \cite{BZA24}). For instance, identifying a subspace with its basis (which is unique up to orthogonal transformations), we may understand the Grassmann manifold as the quotient $W_{n,k}/O(k)$. For our purposes, a different approach turns out to fit better.

For any $A \in W_{n,k}$, $\pist(A) := AA^T \equiv P_A$ is a projection matrix of rank $k$ (more precisely, the projection onto the subspace a basis of which is given by the columns of $A$), and for any $O_k \in O(k)$, we have $P_{AO_k} = P_A$. Therefore, identifying elements of the Grassmannian with projection matrices, we may define
\[
G_{n,k} := \{P \in \Rns \colon P^2 = P, \ \rank(P) = k \},
\]
where $\Rns$ denotes the space of the symmetric $n \times n$ matrices. $G_{n,k}$ is a manifold of dimension $\mathrm{dim}(G_{n,k}) = k(n-k)$. If $k=1$, we get back the half-sphere (where $\theta$ and $-\theta$ are identified), which can also be regarded as the projective space $\mathbb{R}P^{n-1}$. We can now either regard $G_{n,k}$ as a submanifold of $\Rns$ or $\mathbb{R}^{n \times n}$ (the latter is often more convenient when taking derivatives), equipping it with the inherited topology and distances similarly to the case of the Stiefel manifold (in particular, $G_{n,k}$ is compact).

Many of the properties of Stiefel manifolds can be extended to Grassmann manifolds. Especially, the group action $O(n) \ni O_n \mapsto P_{O_nA}$ turns $G_{n,k}$ into a symmetric (not just homogeneous) space. We denote the uniform distribution on $G_{n,k}$ by $\nu_{n,k}$. Clearly, $\nu_{n,k}$ is the pushforward of $\mu_{n,k}$ under the map $\pist$. If $G = (G_{ij})_{i,j=1}^{k,n}$ is an $n \times k$ random matrix whose entries are i.i.d.\ standard normal, it follows from the discussion above that $G(G^TG)^{-1}G^T \sim \nu_{n,k}$.

To introduce a notion of differentiability on $G_{n,k}$, first recall that the tangent space in $P \in G_{n,k}$ is given by
\[
T_P := \{S \in \Rns \colon S = SP + PS\} \equiv \{S \in \Rns \colon S = [[S,P],P]\},
\]
where for any $M,N \in \mathbb{R}^{n \times n}$, $[M,N] = MN - NM$ denotes the matrix commutator. Let us consider functions which are defined and smooth in an open neighbourhood of $G_{n,k}$ in the ambient space, e.\,g.\ $\Rns$ and take the Euclidean gradient $\nabla f(P) = \mathrm{mat}(Df(P)^T)$. Noting that the projection $\pi_P \colon \Rns \to T_P$ is given by
\begin{equation}\label{eq:projfGr}
	\pi_P M := [P,[P,M]] = PM + MP - 2PMP,
\end{equation}
in view of \eqref{eq:intgrad} the intrinsic derivative of $f$ at $P \in G_{n,k}$ is given by the projection onto $T_P$
\begin{equation}\label{eq:FormelGradientGrassmann}
\nabla_G f(P) = \pi_P\nabla f(P) = [P,[P,\nabla f(P)]].
\end{equation}

In particular, $|\nabla_G f(P)| \leq |\nabla f(P)|$ for any $P \in G_{n,k}$. Sometimes yet a further embedding might be convenient, so that we regard $G_{n,k}$ as a submanifold of $\mathbb{R}^{n \times n}$ and take the Euclidean derivatives of some extension of $f$ to an open neighbourhood in $\mathbb{R}^{n \times n}$. However, in this case, we may project $\nabla f(P)$ onto the tangent space of $\Rns$ (which equals $\Rns$) by applying the projection $\pis \colon \mathbb{R}^{n \times n} \to \Rns$ given by $\pis(M) := (M + M^T)/2$. Then, we may proceed as in \eqref{eq:projfGr} for $\pis(\nabla f(P))$.

Establishing a log-Sobolev inequality on $G_{n,k}$ can be performed by studying the Lipschitz properties of the map $\pist \colon W_{n,k} \to G_{n,k}$, $A \mapsto AA^T \equiv P_A$. Clearly, $\pist$ is $2\sqrt{k}$-Lipschitz as
\[
\lVert AA^T - A'A'^T \rVert_\mathrm{HS} \le \lVert A(A^T - A'^T) \rVert_\mathrm{HS} + \lVert (A-A')A'^T \rVert_\mathrm{HS} \le 2\sqrt{k} \lVert A-A' \rVert_\mathrm{HS}
\]
since $\lVert A \rVert_\mathrm{HS}, \lVert A' \rVert_\mathrm{HS} = \sqrt{k}$. This simple bound can however be significantly improved by arguments which are based on principle angles. Carrying out these arguments, it has been shown in \cite[Le.\ 6.1]{GS23} that $\pist$ is $\sqrt{2}$-Lipschitz.

\begin{proposition}[\cite{GS23}, Prop.\ 6.2]\label{prop:LSUGr}
For any $k < n$, $G_{n,k}$ satisfies a logarithmic Sobolev inequality with constant $8/(n-2)$, i.\,e.\ for any $f \colon G_{n,k} \to \mathbb{R}$ sufficiently smooth,
		\[
		\mathrm{Ent}_{\nu_{n,k}} (f^2) \le \frac{16}{n-2} \int_{G_{n,k}} |\nabla_G f|^2 d\nu_{n,k}.
		\]
\end{proposition}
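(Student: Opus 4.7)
The plan is to derive the log-Sobolev inequality on $G_{n,k}$ by transporting the one on $W_{n,k}$ (Proposition \ref{prop:LSUSt}) through the projection map $\pist \colon W_{n,k} \to G_{n,k}$, $A \mapsto AA^T$, exploiting that $\pist$ is $\sqrt{2}$-Lipschitz with respect to the Hilbert--Schmidt metrics, as recalled in the paragraph preceding the statement. Crucially, by construction $\nu_{n,k}$ is the pushforward of $\mu_{n,k}$ under $\pist$, so entropies transport exactly while the Dirichlet form only picks up a factor from the Lipschitz constant squared.

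\smallskip
\noindent\textbf{Step 1 (lifting).} Given any sufficiently smooth $f \colon G_{n,k} \to \R$, I would lift it to $\tilde f := f \circ \pist \colon W_{n,k} \to \R$. Since $\pist$ is smooth and $\nu_{n,k} = (\pist)_\# \mu_{n,k}$, a change of variables gives
\[
\mathrm{Ent}_{\nu_{n,k}}(f^2) = \mathrm{Ent}_{\mu_{n,k}}(\tilde f^2).
\]

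\smallskip
\noindent\textbf{Step 2 (Dirichlet form comparison).} Because $\pist$ is $\sqrt 2$-Lipschitz, the chain rule inequality \eqref{eq:chainrule} for the generalized modulus of the gradient (which coincides with the norm of the intrinsic gradient on each of these Riemannian submanifolds) yields
\[
\lvert \nabla_W \tilde f(A) \rvert \le \sqrt 2 \, \lvert \nabla_G f(\pist(A)) \rvert
\]
for all $A \in W_{n,k}$. Squaring and integrating against $\mu_{n,k}$, and using the pushforward identity once more,
\[
\int_{W_{n,k}} |\nabla_W \tilde f|^2 d\mu_{n,k} \le 2 \int_{G_{n,k}} |\nabla_G f|^2 d\nu_{n,k}.
\]

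\smallskip
\noindent\textbf{Step 3 (applying the Stiefel LSI).} Finally I apply Proposition \ref{prop:LSUSt} to $\tilde f$ on $W_{n,k}$ and chain the two displays:
\[
\mathrm{Ent}_{\nu_{n,k}}(f^2) = \mathrm{Ent}_{\mu_{n,k}}(\tilde f^2) \le \frac{8}{n-2} \int_{W_{n,k}} |\nabla_W \tilde f|^2 d\mu_{n,k} \le \frac{16}{n-2} \int_{G_{n,k}} |\nabla_G f|^2 d\nu_{n,k},
\]
which is precisely the claimed LSI with constant $\sigma^2 = 8/(n-2)$.

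\smallskip
\noindent\textbf{Main obstacle.} The non-trivial analytic input is the $\sqrt 2$-Lipschitz property of $\pist$, which is sharper than the naive $2\sqrt k$ bound and requires principal angle arguments; however, this is already established as \cite[Le.\ 6.1]{GS23} and invoked in the text, so within the present framework the only remaining care is to verify that the intrinsic-gradient norms on $W_{n,k}$ and $G_{n,k}$ indeed agree with the generalized moduli of the gradient relative to the inherited Hilbert--Schmidt metrics (so that the chain-rule inequality \eqref{eq:chainrule} can be invoked). This identification is standard for smooth submanifolds, and once it is in place the proof reduces to the three-line contraction argument above.
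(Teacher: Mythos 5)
Your proposal is correct and follows exactly the paper's argument: the paper's proof is the one-line observation that $(G_{n,k},\nu_{n,k})$ is the pushforward of $(W_{n,k},\mu_{n,k})$ under the $\sqrt{2}$-Lipschitz map $\pist$, so the Stiefel LSI of Proposition \ref{prop:LSUSt} transfers with the constant doubled. Your three steps simply spell out the details of that standard Lipschitz-transport argument.
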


\begin{proof}
	Note that $(G_{n,k}, \nu_{n,k})$ is the push-forward of $(W_{n,k},\mu_{n,k})$ under $\pist$, which is $\sqrt{2}$-Lipschitz. Therefore, the result follows immediately from Proposition \ref{prop:LSUSt}, doubling the Sobolev constant due to the Lipschitz property.
\end{proof}

Working with Euclidean derivatives, Theorem \ref{thm:HigherOrderLSI} thus extends to $G_{n,k}$, replacing $\sigma^2$ by $8/(n-2)$. For further details, see also \cite{GS23}, Th.\ 1.2 and Th.\ 1.4.

\section{Functions of independent and weakly dependent random variables}\label{sec:GenResDisc}

A common feature of all of the examples given in the previous section is the availability of some sort of differentiability, encoded by the generalized modulus of the gradient in whatever framework. The difference operator $\Gamma$ is then chosen accordingly. However, there are many situations of interest which do not belong to this class of examples. A case in point are discrete spaces, where the definition \eqref{eq:genmod} cannot be adapted.

Instead, we shall work with difference operators in the stricter sense of the word, i.\,e., certain operators which satisfy \eqref{eq:DiffOp} and typically involve differences of the functional $f$ under consideration. It turns out that many of the arguments from the previous sections can be adapted by choosing the right difference operator. However, the correct choice of the difference operator depends on the argument one wishes to adapt, and consequently, it often happens to be necessary to work with more than one difference operator. Moreover, unlike in the case of generalized moduli of gradients, some arguments only work for the positive part of the functions involved.

In view of these points, we need a generalized variant of Assumptions \ref{ass:Set1} and hence introduce the following framework.

\begin{assumptions}\label{ass:Set2}
	\begin{enumerate}
		\item There exist $p > 0$, $r_0 > 1$, and $L, \sigma > 0$ such that for any function $g \in \mathcal{F}$, we have
		\begin{equation*}
			\lVert g - \E g \rVert_r \le L\sigma r^{1/p} \lVert \Gamma g \rVert_r
		\end{equation*}
		for all $r \ge r_0$.
		\item For the same quantities $p > 0$, $r_0 > 1$ and $L, \sigma > 0$ as in (1) and any function $g \in \mathcal{F}$, we have
		\begin{equation*}
			\lVert (g - \E g)_+ \rVert_r \le L\sigma r^{1/p} \lVert \Gamma^+ g \rVert_r
		\end{equation*}
		\item There exist ``higher order'' difference operators $\Gamma^{(j)}(g) \in [0,\infty)$, $j = 1, \ldots, d$, such that for any $g \in \mathcal{F}$,
		\begin{equation*}
			\Gamma^+ (\Gamma^{(j)}(g)) \le \Gamma^{(j+1)}(g)
		\end{equation*}
		for any $j = 1, \ldots, d-1$.
		\item[(3')] There exist ``higher order'' difference operators $\Gamma^{(j)}(g) \in [0,\infty)$, $j = 1, \ldots, d$, and some constant $\gamma > 0$ such that for any $g \in \mathcal{F}$, $\Gamma^+(g) \le \gamma \Gamma(g)$ as well as
		\begin{equation*}
			\Gamma^+ (\Gamma^{(j)}(g)) \le \gamma \Gamma^{(j+1)}(g)
		\end{equation*}
		for any $j = 1, \ldots, d-1$.
	\end{enumerate}
\end{assumptions}

To get back Assumptions \ref{ass:Set1}, one just has to set $\Gamma = \Gamma^+$, noting that in this case Assumption \ref{ass:Set2} (2) follows from Assumption \ref{ass:Set2} (1). Assumption \ref{ass:Set2} (3') should be regarded as an alternate and slightly even more general version of Assumption \ref{ass:Set2} (3). Other modifications are possible as well, but in view of our applications we have chosen this way of presentation.

\begin{satz}\label{thm:HigherOrder2}
	Given Assumptions \ref{ass:Set2} (1)--(3), let $f\colon \Omega \to \R$ be a suitable function with $\E f = 0$.
	\begin{enumerate}
		\item Assuming that $\lVert \Gamma^{(j)} f \rVert_1 \le \sigma^{d-j}$ for all $j=1,\ldots,d-1$ and $\lVert \Gamma^{(d)} f \rVert_\infty \le 1$, it holds that
		\[
		\int \exp\Big\{\frac{c_{p,d,r_0,L}}{\sigma^p} |f|^{p/d} \Big\} d\mu \le 2,
		\]
		where a possible choice of the constant $c_{p,d,r_0,L}$ is given by
		\[
		c_{p,d,r_0,L} = \frac{(r_0^{1/p}-1)^p}{2e\max(L^{1/d}, L)^pr_0\max(r_0,p/d)}.
		\]
		\item For any $t\ge 0$, it holds that
		\[
		\mu(|f| \ge t)\le
		2 \exp\Big\{- \frac{C_{p,r_0,L}}{(d\sigma)^p} \min\Big(\min_{k=1, \ldots,d-1} \Big(\frac{t}{\lVert \Gamma^{(k)} f \rVert_1}\Big)^{p/k}, \Big(\frac{t}{\lVert \Gamma^{(d)} f \rVert_\infty}\Big)^{p/d}\Big)\Big\},
		\]
		where a possible choice of the constant $C_{p,r_0,L}$ is given by
		\[
	    C_{p,r_0,L} = \frac{\log(2)}{r_0(Le)^p}.
		\]
	\end{enumerate}
\end{satz}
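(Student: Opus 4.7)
The plan is to reduce the argument to the proof of Theorem \ref{thm:HigherOrder} by re-deriving the key moment bound \eqref{eq:iteratedBd}; once it is in hand, parts (1) and (2) follow by the same applications of Lemma \ref{lem:Conc} (1) and (2), with identical constants. The only substantive change is in the iteration step, where the triangle inequality $\lVert g \rVert_r \le \lVert g \rVert_1 + \lVert g - \E g \rVert_r$ used in the original proof must be replaced by a one-sided version compatible with Assumption \ref{ass:Set2} (2).

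The first step is to bound $\lVert f \rVert_r$ for $r \ge r_0$. Applying Assumption \ref{ass:Set2} (1) directly to $f$ (using $\E f = 0$) yields
\[
\lVert f \rVert_r \le L\sigma r^{1/p} \lVert \Gamma^{(1)} f \rVert_r,
\]
since $\Gamma = \Gamma^{(1)}$ by convention. Next, I want to iterate on the right-hand side using the higher-order operators. Since each $\Gamma^{(j)} f \ge 0$, I observe the elementary inequality
\[
\Gamma^{(j)} f \le \E \Gamma^{(j)} f + (\Gamma^{(j)} f - \E \Gamma^{(j)} f)_+,
\]
and consequently
\[
\lVert \Gamma^{(j)} f \rVert_r \le \lVert \Gamma^{(j)} f \rVert_1 + \lVert (\Gamma^{(j)} f - \E \Gamma^{(j)} f)_+ \rVert_r.
\]
Applying Assumption \ref{ass:Set2} (2) to $g = \Gamma^{(j)} f$ and then Assumption \ref{ass:Set2} (3), the second term is dominated by $L\sigma r^{1/p} \lVert \Gamma^+(\Gamma^{(j)} f) \rVert_r \le L\sigma r^{1/p} \lVert \Gamma^{(j+1)} f \rVert_r$, so that
\[
\lVert \Gamma^{(j)} f \rVert_r \le \lVert \Gamma^{(j)} f \rVert_1 + L\sigma r^{1/p} \lVert \Gamma^{(j+1)} f \rVert_r
\]
for every $j = 1, \ldots, d-1$ and every $r \ge r_0$. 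Unrolling this recursion $d-1$ times and using $\lVert \Gamma^{(d)} f \rVert_r \le \lVert \Gamma^{(d)} f \rVert_\infty$ at the last stage produces exactly
\[
\lVert f \rVert_r \le \sum_{j=1}^{d-1} L^j \sigma^j r^{j/p} \lVert \Gamma^{(j)} f \rVert_1 + L^d \sigma^d r^{d/p} \lVert \Gamma^{(d)} f \rVert_\infty,
\]
that is, the analogue of \eqref{eq:iteratedBd}.

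From here, the remainder of the proof is verbatim the proof of Theorem \ref{thm:HigherOrder}. For (1), plug in the hypotheses on $\lVert \Gamma^{(j)} f \rVert_1$ and $\lVert \Gamma^{(d)} f \rVert_\infty$, geometric-sum the $r^{j/p}$ terms, separate the cases $r \ge r_0$ and $r \le r_0$, and apply Lemma \ref{lem:Conc} (1) to $|f|^{p/d}$ with the same $\gamma$ as before. For (2), apply Lemma \ref{lem:Conc} (2) with $\mathcal{J} = \{1,\ldots,d\}$, $p_j = p/j$, and $C_j$ read off from the $j$-th summand; the constant comes out identically.

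The only conceptual obstacle is isolated to step 1: verifying that one-sided deviation control (Assumption \ref{ass:Set2} (2)) together with the one-sided chain-type inequality (Assumption \ref{ass:Set2} (3)) suffices to reproduce the two-sided recursion used in the proof of Theorem \ref{thm:HigherOrder}. The nonnegativity of $\Gamma^{(j)} f$ is what makes this work, since it converts $\lVert \Gamma^{(j)} f \rVert_r$ into a bound on the positive deviation plus the first moment without ever needing a two-sided $\Gamma$-control on $\Gamma^{(j)} f$.
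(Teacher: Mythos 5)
Your proposal is correct and follows essentially the same route as the paper: apply Assumption 4.1 (1) to $f$, then use the elementary bound $\lVert W \rVert_r \le \E W + \lVert (W-\E W)_+\rVert_r$ for the nonnegative random variables $\Gamma^{(j)}f$ together with Assumptions 4.1 (2)\&(3) to obtain the recursion, unroll it to recover the analogue of \eqref{eq:iteratedBd}, and conclude via Lemma \ref{lem:Conc} exactly as in Theorem \ref{thm:HigherOrder}. The observation you isolate at the end --- that nonnegativity of $\Gamma^{(j)}f$ is what lets one-sided control suffice --- is precisely the point the paper's proof turns on as well.
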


Variants of Theorem \ref{thm:HigherOrder2} include the following situations.

\begin{proposition}\label{prop:HigherOrder3}
Consider the framework of Theorem \ref{thm:HigherOrder2}.
\begin{enumerate}
\item If only Assumptions \ref{ass:Set2} (2)\&(3) are satisfied, the same bounds as in Theorem \ref{thm:HigherOrder2} continue to hold with $|f|$ replaced by $f_+$. In particular, (2) is an inequality for the upper tails $\mu(f\ge t)$ in this case.
\item The results from Theorem \ref{thm:HigherOrder2} remain valid if (3) is replaced by (3'), with the only modification that both $c_{p,d,r_0,L}$ and $C_{p,r_0,L}$ must be multiplied by $1/\gamma^p$ in this case.
\end{enumerate}
\end{proposition}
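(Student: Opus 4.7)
Both parts follow the template of the proof of Theorem \ref{thm:HigherOrder}, with small but essential modifications. For Part (1), the plan is to repeat the derivation of the iterated bound \eqref{eq:iteratedBd} but with Assumption \ref{ass:Set2}(1) replaced by Assumption \ref{ass:Set2}(2) throughout. Setting $g := \Gamma^{(j)} f \ge 0$, Assumption (2) combined with Assumption (3) yields
\[
\lVert (\Gamma^{(j)} f - \E \Gamma^{(j)} f)_+ \rVert_r \le L\sigma r^{1/p} \lVert \Gamma^+ \Gamma^{(j)} f \rVert_r \le L\sigma r^{1/p} \lVert \Gamma^{(j+1)} f \rVert_r,
\]
and the elementary pointwise bound $h \le \E h + (h - \E h)_+$ together with $\E \Gamma^{(j)} f \le \lVert \Gamma^{(j)} f \rVert_1$ yields
\[
\lVert \Gamma^{(j)} f \rVert_r \le \lVert \Gamma^{(j)} f \rVert_1 + L\sigma r^{1/p} \lVert \Gamma^{(j+1)} f \rVert_r,
\]
which is precisely the iteration step used in the proof of Theorem \ref{thm:HigherOrder}. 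Applying Assumption (2) to $f$ itself at the base step (with $\E f = 0$) and iterating produces a bound of the same form as \eqref{eq:iteratedBd} but with $\lVert f \rVert_r$ replaced by $\lVert f_+ \rVert_r$. Feeding this into Lemma \ref{lem:Conc} applied to $g := f_+$ gives both conclusions of Theorem \ref{thm:HigherOrder2}, now for $f_+$ in place of $|f|$, whence in particular an upper tail bound.

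For Part (2), the plan is to run the proof of Theorem \ref{thm:HigherOrder2} verbatim with every invocation of (3) replaced by (3'). Each such step introduces an additional factor $\gamma$, turning the iteration inequality into
\[
\lVert \Gamma^{(j)} f \rVert_r \le \lVert \Gamma^{(j)} f \rVert_1 + L\gamma\sigma r^{1/p} \lVert \Gamma^{(j+1)} f \rVert_r.
\]
Using Assumption (1) at the base step (which is unaffected by (3')) and iterating $d-1$ times, the $j$-th summand in the analogue of \eqref{eq:iteratedBd} acquires a factor $\gamma^{j-1}$. Assuming without loss of generality that $\gamma \ge 1$ (if $\gamma < 1$, then (3') implies (3) with unit constant, so Theorem \ref{thm:HigherOrder2} already gives a stronger conclusion than the $1/\gamma^p > 1$-weakened version claimed here), we use $\gamma^{j-1} \le \gamma^j$ to rewrite the iterated bound as \eqref{eq:iteratedBd} with $L$ uniformly replaced by $L\gamma$. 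Passing through Lemma \ref{lem:Conc} then produces the stated bounds with $c_{p,d,r_0,L}$ and $C_{p,r_0,L}$ replaced by $c_{p,d,r_0,L\gamma}$ and $C_{p,r_0,L\gamma}$, and direct inspection of the explicit formulas shows that this substitution amounts precisely to multiplication by $1/\gamma^p$.

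The main point requiring care is the bookkeeping of the $\gamma$-factors in Part (2): since Assumption (1) at the base step contributes no $\gamma$ while each subsequent iteration step does, naive counting yields $\gamma^{j-1}$ rather than a clean $\gamma^j$ at the $j$-th summand. The clean reduction to \emph{effective} $L = L\gamma$ rests on the elementary but essential inequality $\gamma^{j-1} \le \gamma^j$, which is only available after the WLOG reduction $\gamma \ge 1$. Apart from this piece of bookkeeping, both parts are structural reorganizations of the proof of Theorem \ref{thm:HigherOrder} and present no further obstacles.
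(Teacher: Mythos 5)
Your argument is essentially the paper's own: the paper disposes of both parts in two sentences at the end of the proof of Theorem \ref{thm:HigherOrder2} (for (1), run the iteration starting from Assumption \ref{ass:Set2} (2) to get the bound on $\lVert f_+ \rVert_r$ and feed it into Lemma \ref{lem:Conc}; for (2), ``the case of (3') follows in the same way''), and your more explicit bookkeeping of the $\gamma$-factors, giving $\gamma^{j-1}$ on the $j$-th summand and absorbing this into an effective constant $L\gamma$ via $\gamma^{j-1}\le\gamma^j$, is a correct and welcome elaboration for $\gamma\ge 1$. One quibble: your reduction to $\gamma \ge 1$ has the comparison backwards. For $\gamma<1$, multiplying $c_{p,d,r_0,L}$ and $C_{p,r_0,L}$ by $1/\gamma^p>1$ makes the exponential integrability and tail bounds \emph{stronger}, not weaker, so Theorem \ref{thm:HigherOrder2} does not ``already give a stronger conclusion''; indeed your iterated bound only yields $\gamma^{j-1}\le 1$ in that regime, i.e.\ the unmodified constants of Theorem \ref{thm:HigherOrder2}, and the literal claim with the factor $1/\gamma^p$ is not obtained. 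The statement should be read with the natural convention $\gamma\ge 1$ (for $\gamma<1$ one simply notes that (3') implies (3) and applies Theorem \ref{thm:HigherOrder2} directly); with that caveat your proof is complete. A last cosmetic point: for $c_{p,d,r_0,L}$ the substitution $L\mapsto L\gamma$ does not \emph{exactly} multiply the constant by $1/\gamma^p$ because of the $\max(L^{1/d},L)$ term, but one checks $\max((L\gamma)^{1/d},L\gamma)\le\gamma\max(L^{1/d},L)$ for $\gamma\ge1$, so $c_{p,d,r_0,L\gamma}\ge c_{p,d,r_0,L}/\gamma^p$ and the claimed (smaller) constant is still admissible.
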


Note that in all the examples in Section \ref{sec:Ex2}, we have $p=2$, so that in principle, Assumptions \ref{ass:Set2} and Theorem \ref{thm:HigherOrder2} could be formulated in this situation only. In the context of this note, we have chosen to stick to the more general framework of arbitrary $p > 0$.

\begin{proof}[Proof of Theorem \ref{thm:HigherOrder2}]
	The proof is very similar to the proof of Theorem \ref{thm:HigherOrder}. First, Assumption \ref{ass:Set2} (1) yields
	\begin{align*}
		\norm{f}_r \le L\sigma r^{1/p} \norm{\Gamma f}_p \le L\sigma r^{1/p} \norm{\Gamma f}_1 + L\sigma r^{1/p} \norm{(\Gamma f - \E \Gamma f)_+}_r
	\end{align*}
	where we have used that for any positive random variable $W$
	\begin{align}		\label{eqn:LpEstimatePositiveFunction}
		\norm{W}_r \le \IE W + \norm{(W - \IE W)_+}_r.
	\end{align}
	Now, by Assumptions \ref{ass:Set2} (2)\&(3) it follows that
	\begin{align*}
		\norm{\left( \Gamma f - \E \Gamma f \right)_+}_r \le L\sigma r^{1/p} \norm{\Gamma^+ (\Gamma f)}_r \le L\sigma r^{1/p} \norm{\Gamma^{(2)}f}_r.
	\end{align*}
	This can be easily iterated to obtain for any $d \in \IN$
	\begin{align*}
		\norm{f}_r \le \sum_{j = 1}^{d-1} (L\sigma r^{1/p})^j \norm{\Gamma^{(j)}f}_1 + (L\sigma r^{1/p})^d \norm{\Gamma^{(d)}f}_\infty.
	\end{align*}
	If (1) is not satisfied, then one uses (2) in the first step instead, which leads to
	\begin{align*}
		\norm{f_+}_r \le \sum_{j = 1}^{d-1} (L\sigma r^{1/p})^j \norm{\Gamma^{(j)}f}_1 + (L\sigma r^{1/p})^d \norm{\Gamma^{(d)}f}_\infty.
	\end{align*}
	Now one can argue as in the proof of Theorem \ref{thm:HigherOrder}. The case of (3) being replaced by (3') follows in the same way.
\end{proof}

\section{Examples}\label{sec:Ex2}

\subsection{Independent random variables}

In the sequel, we will consider sets of random variables $X=(X_1, \ldots, X_n)$ (with values in some measurable space $(\mathcal{X}, \mathcal{B}) = \otimes_{i =1}^n (\mathcal{X}_i, \mathcal{B}_i)$) and functions $f(X) = f(X_1, \ldots, X_n)$ (in particular, this means a slight switch of notation with $\mu$ from above being the distribution of $X$, however all quantities can be readily adapted). In this section, we will moreover assume the variables $X_1, \ldots, X_n$ to be independent. To formulate the corresponding results, it is sufficient to consider what we may call $L^\infty$ difference operators, which frequently appear in the method of bounded differences.

Let $X' = (X_1',\ldots, X'_n)$ be an independent copy of $X$, defined on the same probability space. Given $f(X) \in L^\infty(\mathbb{P})$, define for each $i = 1, \ldots, n$
\begin{equation*}
	T_i f \coloneqq T_if(X) \coloneqq f(X_{i^c}, X_i') = f(X_1, \ldots, X_{i-1}, X_i',\linebreak[2] X_{i+1},\ldots, X_n)
\end{equation*}
and
\begin{equation}	\label{eq:Operatorh}
	\mathfrak{h}_i f(X) = \lVert f(X) - T_if(X) \rVert_{i, \infty},
	\qquad \mathfrak{h} f(X) = (\mathfrak{h}_1 f(X), \ldots, \mathfrak{h}_n f(X)),
\end{equation}
where $\lVert \cdot \rVert_{i, \infty}$ denotes the $L^\infty$-norm with respect to $(X_i,X_i')$. The difference operator $\abs{\mathfrak{h}f}$ is then given as the Euclidean norm of the vector $\mathfrak{h} f$.

By establishing Assumptions \ref{ass:Set2}, we may show the following result, whose second part mirrors \cite[Th.\ 1]{GSS21a}.

\begin{satz}\label{thm:HigherOrderIndep}
    Let $X$ be a random vector with independent components, $f: \mathcal{X} \to \R$ a measurable function satisfying $f = f(X) \in L^\infty(\IP)$, $\E f(X) = 0$ and $d \in \N$.
    \begin{enumerate}
        \item Assuming that $\norm{\mathfrak{h}^{(j)} f}_{\mathrm{op},1} \le 1$ for all $j=1,\ldots,d-1$ and $\norm{\mathfrak{h}^{(d)} f}_{\mathrm{op},\infty} \le 1$, it holds that
        \[
        \E \exp\Big\{c |f(X)|^{2/d} \Big\} \le 2,
        \]
        where a possible choice of the constant $c$ is given by
        \[
        c = \frac{(\sqrt{2}-1)^2}{64 \kappa e}.
        \]
        \item For any $t\ge 0$, it holds that
        \[
        \P(|f(X)| \ge t)\le
        2 \exp\Big\{- \frac{C}{d^2} \min\Big(\min_{j=1, \ldots,d-1} \Big(\frac{t}{\norm{\mathfrak{h}^{(j)} f}_{\mathrm{op},1}}\Big)^{2/j}, \Big(\frac{t}{\norm{\mathfrak{h}^{(d)} f}_{\mathrm{op},\infty}}\Big)^{2/d}\Big)\Big\},
        \]
    where a possible choice of the constant $C$ is given by
    \[
    C = \frac{\log(2)}{16 \kappa e^2} > \frac{1}{217}.
    \]
    \end{enumerate}
\end{satz}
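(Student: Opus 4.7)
The plan is to invoke Theorem \ref{thm:HigherOrder2} with $p = r_0 = 2$, $\sigma = 1$, basic difference operator $\Gamma g := |\mathfrak{h} g|$ (Euclidean norm of the vector \eqref{eq:Operatorh}), one-sided version $\Gamma^+ g := |\mathfrak{h}^+ g|$ where $\mathfrak{h}^+_i g := \lVert (g - T_i g)_+ \rVert_{i, \infty}$, and higher-order operators $\Gamma^{(j)} g := |\mathfrak{h}^{(j)} g|_{\mathrm{op}}$. Here $\mathfrak{h}^{(j)} g$ is the $j$-tensor obtained by iterating $\mathfrak{h}$ entry-wise, i.\,e.\ $(\mathfrak{h}^{(j)} g)_{i_1, \ldots, i_j} = \mathfrak{h}_{i_j} \cdots \mathfrak{h}_{i_1} g$, equipped with the operator norm. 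The universal constant $\kappa$ in the statement will coincide with the constant appearing in the $L^r$ moment inequality discussed next (and gives $L = \sqrt{\kappa}$).

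For Assumption \ref{ass:Set2} (1) and (2), the key input is the $L^r$-Efron--Stein / Boucheron--Bousquet--Lugosi--Massart inequality, which for any $r \ge 2$ and any measurable $g(X) \in L^\infty(\IP)$ yields
\[
\lVert g - \E g \rVert_r \le \sqrt{\kappa r}\, \lVert\, |\mathfrak{h} g|\, \rVert_r, \qquad \lVert (g - \E g)_+ \rVert_r \le \sqrt{\kappa r}\, \lVert\, |\mathfrak{h}^+ g|\, \rVert_r,
\]
with a universal $\kappa > 0$. These are exactly Assumption \ref{ass:Set2} (1), (2) with $p = r_0 = 2$, $\sigma = 1$ and $L = \sqrt{\kappa}$. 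The sharpest available constants are well documented in the literature (e.\,g.\ in the monograph of Boucheron, Lugosi and Massart) and will be cited rather than re-derived.

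The substantive step, which I expect to be the main obstacle, is Assumption \ref{ass:Set2} (3), namely the discrete chain-rule inequality
\[
\mathfrak{h}^+_i\bigl(|\mathfrak{h}^{(j)} f|_{\mathrm{op}}\bigr)(X) \le \bigl(|\mathfrak{h}^{(j+1)} f|_{\mathrm{op}}\bigr)\text{-type contribution at coordinate } i,
\]
and then the vector-level bound $\Gamma^+(\Gamma^{(j)} f) \le \Gamma^{(j+1)} f$. The starting point is the reverse triangle inequality for the operator norm,
\[
\bigl(|\mathfrak{h}^{(j)} f|_{\mathrm{op}}(X) - |\mathfrak{h}^{(j)} f|_{\mathrm{op}}(T_i X)\bigr)_+ \le |\mathfrak{h}^{(j)} f(X) - \mathfrak{h}^{(j)} f(T_i X)|_{\mathrm{op}}.
\]
Taking the $L^\infty$-norm in $(X_i, X_i')$ inside the operator-norm supremum (which is legitimate because one can first choose the test vectors $v^1, \ldots, v^j$ and only then take $\sup$ over $(X_i, X_i')$, using that entry-wise $L^\infty$ application commutes with linear combinations), one identifies the right-hand side with the $i$-th slice of $\mathfrak{h}^{(j+1)} f$ contracted against the same test vectors. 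Summing over $i$ with a further supremum over a unit vector in the $(j+1)$-st slot yields the desired inequality $|\mathfrak{h}^+(|\mathfrak{h}^{(j)} f|_{\mathrm{op}})| \le |\mathfrak{h}^{(j+1)} f|_{\mathrm{op}}$. This is the discrete analogue of Lemma \ref{lem:itGradHessLSI}; however, because $\mathfrak{h}_i$ involves a non-smooth $L^\infty$ operation rather than a true derivative, some care is needed in swapping $\sup$ and $\lVert \cdot \rVert_{i, \infty}$, and this is where all the bookkeeping of the proof will sit. Once (1)--(3) are in place, parts (1) and (2) of the theorem follow by direct substitution into Theorem \ref{thm:HigherOrder2}, with the stated constants obtained by plugging $p = 2$, $r_0 = 2$ and $L = \sqrt{\kappa}$ into the explicit formulas for $c_{p,d,r_0,L}$ and $C_{p,r_0,L}$.
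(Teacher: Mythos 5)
Your overall architecture coincides with the paper's: verify Assumptions \ref{ass:Set2} with $p=r_0=2$, $\sigma=1$, $\Gamma f=\lvert\mathfrak{h}f\rvert$, $\Gamma^+f=\lvert\mathfrak{h}^+f\rvert$, $\Gamma^{(j)}f=\lvert\mathfrak{h}^{(j)}f\rvert_{\mathrm{op}}$, feed in the Boucheron--Bousquet--Lugosi--Massart moment inequalities (Theorem \ref{th:BBLM}) for (1)--(2), prove an operator-norm recursion for (3), and plug into Theorem \ref{thm:HigherOrder2}. Two bookkeeping remarks. First, the paper's $\mathfrak{h}^{(j)}f$ is defined in \eqref{eq:DefiHigherOrderDerivatives} as a single $L^\infty$-norm of the product $\prod_s(\Id-T_{i_s})f$ over \emph{distinct} indices, not by iterating $\mathfrak{h}$ entrywise; your iterated tensor is entrywise dominated by the paper's (reverse triangle inequality for $\lVert\cdot\rVert_{i,\infty}$), so your version would give a formally stronger conclusion, but it is not the object appearing in the statement. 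Second, to reproduce the stated constants you need $L^2=8\kappa$ rather than $L^2=\kappa$: the two-sided inequality of Theorem \ref{th:BBLM} reads $\lVert f-\E f\rVert_r\le(8\kappa r)^{1/2}\lVert\mathfrak{h}f\rVert_r$ with $\kappa=\sqrt{e}/(2(\sqrt{e}-1))$, and indeed $2e\cdot 8\kappa\cdot 2\cdot 2=64\kappa e$ and $2\cdot 8\kappa e^2=16\kappa e^2$.

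The genuine gap is in your sketch of Assumption \ref{ass:Set2} (3). Starting from the reverse triangle inequality $(\lvert A\rvert_{\mathrm{op}}-\lvert B\rvert_{\mathrm{op}})_+\le\lvert A-B\rvert_{\mathrm{op}}$ with $A=\mathfrak{h}^{(j)}f(X)$ and $B=\mathfrak{h}^{(j)}T_if(X)$ cannot close: after that step you hold, for each $i$, the full operator norm of a difference tensor, which you can only dominate by the operator norm of the $i$-th slice of $\mathfrak{h}^{(j+1)}f$; but $\sum_i\lvert M_{i\,\cdot}\rvert_{\mathrm{op}}^2\le\lvert M\rvert_{\mathrm{op}}^2$ is false for general non-negative tensors (take $M=I_n$: the left-hand side is $n$, the right-hand side is $1$), so the final summation over $i$ fails. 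The paper's Lemma \ref{lem:recursiveEstimateOperatorNorms} instead uses that $\mathfrak{h}^{(j)}f$ is a non-negative tensor, so the supremum defining $\lvert\mathfrak{h}^{(j)}f(X)\rvert_{\mathrm{op}}$ is attained at vectors $\tilde v^1,\ldots,\tilde v^j$ with non-negative entries which do not depend on $i$ or on $(X_i,X_i')$; one bounds $(\lvert A\rvert_{\mathrm{op}}-\lvert B\rvert_{\mathrm{op}})_+\le(\langle\tilde v^1\cdots\tilde v^j,A-B\rangle)_+$, uses the non-negativity of the $\tilde v^\ell$ to replace the entries of $A-B$ by $\mathfrak{h}_{i i_1\ldots i_j}f$, and only after summing the squares over $i$ introduces the supremum over the $(j+1)$-st unit vector, thereby recognizing the Euclidean norm in $i$ as part of one operator norm of the full $(j+1)$-tensor. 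Keeping the \emph{same} $\tilde v$ for every $i$ is what makes this work; it is also precisely why the recursion holds for $\mathfrak{h}^+$ but not for the two-sided $\mathfrak{h}$ (the paper notes a Rademacher counterexample), which is the reason Theorem \ref{thm:HigherOrder2} rather than Theorem \ref{thm:HigherOrder} is needed here at all. Your sketch gestures at ``the same test vectors'' and the extra supremum, but these are incompatible with having already passed to $\lvert A-B\rvert_{\mathrm{op}}$; the one-sided maximizer argument is the missing idea.
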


Let us provide a brief sketch of the main arguments leading to Theorem \ref{thm:HigherOrderIndep}. In addition to $|\mathfrak{h}|$ from \eqref{eq:Operatorh}, we also introduce two closely related difference operators. For $i \in \{1, \ldots, n\}$, let
\begin{align*} 
	&\mathfrak{h}^+_if(X) = \lVert (f(X) - T_if(X))_+ \rVert_{X_i', \infty},
	\qquad \mathfrak{h}^+f = (\mathfrak{h}^+_1f, \ldots, \mathfrak{h}^+_nf),\\
	&\mathfrak{h}^-_if(X) = \lVert (f(X) - T_if(X))_- \rVert_{X_i', \infty},
	\qquad \mathfrak{h}^-f = (\mathfrak{h}^-_1f, \ldots, \mathfrak{h}^-_nf),
\end{align*}
where $\norm{f}_{X_i',\infty}$ shall denote the $L^\infty$ norm with respect to $X_i'$. As usual, one now defines $|\mathfrak{h}^+ f|$ and $|\mathfrak{h}^- f|$ to be the respective Euclidean norms. In the framework of Assumptions \ref{ass:Set2}, one has $\Gamma (f) = |\mathfrak{h} f|$ and $\Gamma^+ (f) = |\mathfrak{h}^+ f|$, while $|\mathfrak{h}^- f|$ can be regarded as an auxiliary difference operator.

To verify Assumptions \ref{ass:Set2} (1)\&(2), we establish the following theorem.

\begin{satz}\label{th:BBLM}
	If $X_1, \ldots, X_n$ are independent random variables and $f = f(X) \in L^\infty(\P)$, with the constant $\kappa = \frac{\sqrt{e}}{2\,(\sqrt{e} - 1)}$, we have for any $r \ge 2$,
	\begin{equation*} 
		\lVert (f - \mathbb{E}f)_+ \rVert_r \le (2\kappa r)^{1/2}\, \lVert \mathfrak{h}^+f \rVert_r \quad \text{and} \quad
		\lVert (f - \mathbb{E}f)_- \rVert_r \le (2\kappa r)^{1/2}\, \lVert \mathfrak{h}^-f \rVert_r.
	\end{equation*}
	Consequently, this leads to 
	\[
	\norm{f - \IE f}_r \le (8\kappa r)^{1/2} \norm{\mathfrak{h}f}_r.
	\]
\end{satz}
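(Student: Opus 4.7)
This is a moment-type refinement of the Efron--Stein inequality for functions of independent random variables, and I would prove it by the entropy method in the spirit of Boucheron, Bousquet, Lugosi and Massart. The main point over the standard reference is that working with the $L^\infty$ quantity $\mathfrak{h}^+$ instead of the conditional $L^2$ quantity $(\E[(f-T_i f)_+^2 \mid X])^{1/2}$ only relaxes the bound, since the latter is dominated pointwise by the former; the same argument therefore goes through.

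The key ingredient is the sub-additivity of $\Phi$-entropy: for independent $X_1, \ldots, X_n$ and any bounded positive $Y = Y(X)$,
\[
\mathrm{Ent}_\Phi(Y) \le \sum_{i=1}^n \E[\mathrm{Ent}_\Phi^{(i)}(Y)],
\]
where $\mathrm{Ent}_\Phi^{(i)}$ denotes conditional $\Phi$-entropy with respect to $X_i$ given $X_{i^c}$. Applied to $\Phi(x) = x^r$, $r \ge 2$, and $Y = (f-\E f)_+$, and using coupling with an independent copy $X_i'$ together with the elementary convex-analytic inequality
\[
a^r - b^r - r b^{r-1}(a-b) \le \tfrac{r(r-1)}{2} (a-b)_+^2 (a \vee b)^{r-2}, \qquad a,b \ge 0,
\]
each conditional term is bounded in terms of $(f - T_i f)_+$. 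Summing in $i$ and dominating $\E_{X_i'}[(f-T_i f)_+^2] \le (\mathfrak{h}_i^+ f)^2$ (the step in which the passage from $L^2$ to $L^\infty$ in $X_i'$ occurs), followed by H\"older's inequality, yields a recursive inequality for $r \mapsto \lVert (f-\E f)_+ \rVert_r$. Integrating this Herbst-type relation delivers the claimed bound with the sharp constant $\kappa = \sqrt{e}/(2(\sqrt{e}-1))$.

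The analogous inequality for $(f - \E f)_-$ follows by applying the first estimate to $-f$ and observing that $\mathfrak{h}_i^+(-f) = \mathfrak{h}_i^- f$. The final consequence $\lVert f - \E f \rVert_r \le \sqrt{8\kappa r}\, \lVert \mathfrak{h} f \rVert_r$ then follows from the triangle inequality $\lVert f - \E f \rVert_r \le \lVert (f-\E f)_+ \rVert_r + \lVert (f-\E f)_- \rVert_r$ combined with the pointwise bound $\mathfrak{h}_i^\pm f \le \mathfrak{h}_i f$, which gives $\lVert \mathfrak{h}^\pm f \rVert_r \le \lVert \mathfrak{h} f \rVert_r$; the sum produces the factor $2\sqrt{2\kappa r} = \sqrt{8\kappa r}$.

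The main obstacle is the bookkeeping required to track the sharp constant $\kappa$ through the Herbst iteration (the choice of $\Phi$, the use of self-bounding tricks, and the optimisation in the resulting differential inequality are all delicate). Conceptually, however, the $L^\infty$ formulation is actually a simplification: it permits the factor $\mathfrak{h}_i^+ f$ to be pulled out of the $X_i'$-conditional expectation without any further manipulation, which is the only structural modification one has to make relative to the classical BBLM argument.
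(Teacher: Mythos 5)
Your overall route is the right one and matches the paper's: the theorem is proved by the moment/entropy method of Boucheron--Bousquet--Lugosi--Massart (tensorization of $\Phi$-entropies, a one-dimensional modified $\Phi$-Sobolev step, a moment recursion solved by induction on $r$, with Efron--Stein as the base case), and the passage from the conditional $L^2$ quantities $V^\pm$ of \cite{BBLM05} to the $L^\infty$ quantities $\mathfrak{h}^\pm$ is indeed the harmless domination you describe. The reduction of the second inequality to the first via $f\mapsto -f$ and the final step $\lVert f-\E f\rVert_r\le \lVert(f-\E f)_+\rVert_r+\lVert(f-\E f)_-\rVert_r\le 2(2\kappa r)^{1/2}\lVert\mathfrak{h}f\rVert_r$ are also fine.

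However, the key lemma as you state it is false. The subadditivity $\Ent_\Phi(Y)\le\sum_i\E[\Ent_\Phi^{(i)}(Y)]$ does \emph{not} hold for $\Phi(x)=x^r$ with $r>2$: by the Lata{\l}a--Oleszkiewicz characterization \cite{LO00}, tensorization of the $\Phi$-entropy requires $1/\Phi''$ to be concave, and for $\Phi(x)=x^r$ one has $1/\Phi''(x)=x^{2-r}/(r(r-1))$, which is strictly convex when $r>2$. So you cannot apply tensorization directly to $Y=(f-\E f)_+$ with exponent $r$. The correct implementation (and the one underlying \eqref{eq:Tensorisierung}, which is stated only for exponents $q\in(1,2]$) applies the subadditivity with $\Phi_\alpha(x)=x^\alpha$, $\alpha=r/(r-1)\in(1,2]$, to the variable $(f-\E f)_+^{r-1}$, producing a recursion between $\E[(f-\E f)_+^r]$ and $\bigl(\E[(f-\E f)_+^{r-1}]\bigr)^{r/(r-1)}$ which is then iterated over $r\in(k,k+1]$. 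Relatedly, your elementary inequality $a^r-b^r-rb^{r-1}(a-b)\le\frac{r(r-1)}{2}(a-b)_+^2(a\vee b)^{r-2}$ fails whenever $a<b$ (take $a=0$, $b=1$): the one-dimensional step in \cite{BBLM05} uses one-sided variants valid only on the appropriate region, and getting the positive/negative parts to line up there is precisely where the care is needed. Neither issue dooms the strategy --- both are repaired by following \cite{BBLM05} or \cite[Sec.~2]{BGS19} verbatim --- but as written the central tensorization step does not hold.
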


In fact, replacing $\mathfrak{h}^\pm$ by (the square roots of) the closely related quantities $V^\pm(f) = \E(\sum_{i = 1}^n (f-T_if)^2_\pm|X)$ (which can be regarded as $L^2$-type difference operators), the first line of Theorem \ref{th:BBLM} is actually the content of \cite[Th.\ 2]{BBLM05}.

For a proof, one may follow the arguments from \cite{BBLM05} and those in \cite[Sec.\ 2]{BGS19} (which restate the lines of the proof of \cite{BBLM05} in a self-contained, slightly adapted and somewhat simplified manner). To mention some of the key ingredients, in essence the proof follows by some sort of induction which consecutively addresses $r \in (k,k+1]$ for $k \in \N$, where the first step follows from the Efron--Stein inequality.

A central argument is some kind of moment recursion which by tensorization arguments can be reduced to the case of $n=1$. Here, in the language of \cite{BBLM05}, one uses so-called (modified) $\Phi$-Sobolev inequalities. These are generalizations of log-Sobolev inequalities which involve $\Phi$-entropies, where $\Phi:\R_+\to\R$ is some function which satisfies the following three properties:
\begin{itemize}
    \item[(i)] $\Phi$ is convex and continuous,
    \item[(ii)] $\Phi$ is twice differentiable on $(0,\infty)$,
    \item[(iii)] $\Phi$ is either affine, or $\Phi''$ is strictly positive and $1/\Phi''$ is concave.
\end{itemize}
Typical examples are $\Phi_{\log}(x)=x\log x$ or $\Phi_q(x)=x^{2/q}$ with $q\in(1,2]$ and $x\in\R_+$. The $\Phi$-entropy of a random variable $f$ is defined as
$$
\Ent_\Phi(f) := \E[\Phi(f)] - \Phi(\E[f]).
$$
In particular, the classical entropy $\Ent(f)=\E[f\log f]-\E[f]\log(\E[f])$ of $f$ is recovered by taking $\Phi=\Phi_{\log}$.

The functions relevant for the arguments from \cite{BBLM05} are the power functions $\Phi_q(x)$ for $q \in (1,2]$. In this context, a fact which turns out to be important is that the tensorization property
	\begin{equation}
		\label{eq:Tensorisierung}
		\mathbb{E}\, |f|^q - \big(\mathbb{E}\, |f|\big)^q \le 
		\mathbb{E}\, \sum_{i=1}^{n} \Big(\mathbb{E}_i\, |f|^q - 
		\big(\mathbb{E}_i\, |f|\big)^q\Big),
	\end{equation}
holds for any $f \in L^q$ and any $q \in (1,2]$, where $\mathbb{E}_i$ denotes expectation with respect to $X_i$. In fact, tensorization holds for $\Phi$-entropies in general, cf.\ \cite[Prop.\ 4.1]{Led96} and also \cite{LO00}.

To establish Assumption \ref{ass:Set2} (3), we need higher order versions of $\mathfrak{h}$, denoted by $\mathfrak{h}^{(j)} f$. They can be thought of as analogues of the $j$-tensors of all partial derivatives of order $j$ in an abstract setting. To define the $j$-tensor $\mathfrak{h}^{(j)} f$, we specify it on its ``coordinates''. That is, given distinct indices $i_1, \ldots, i_j$, we set
\begin{align} 		\label{eq:DefiHigherOrderDerivatives}
	\begin{split}
		\mathfrak{h}_{i_1 \ldots i_j}f(X) 
		= \; &
		\Big\lVert\, \prod_{s=1}^j\, 
		(\mathrm{Id} - T_{i_s})f(X) \Big\rVert_{i_1, \ldots, i_j, \infty}\\
		= \; &
		\Big\lVert\, f(X) + \sum_{\ell=1}^j\, (-1)^\ell \sum_{1 \leq s_1 < \ldots < s_\ell \leq j}
		T_{i_{s_1} \ldots i_{s_\ell}}f(X)\, \Big\rVert_{i_1, \ldots, i_j, \infty}
	\end{split}
\end{align}
where $T_{i_1 \ldots i_j} = T_{i_1} \circ \ldots \circ T_{i_j}$ exchanges the random variables $X_{i_1}, \ldots, X_{i_j}$ by $X'_{i_1}, \ldots, X'_{i_j}$, and 
$\lVert \cdot \rVert_{i_1, \ldots, i_j, \infty}$ denotes the $L^\infty$-norm with respect to the random variables $X_{i_1}, \ldots, X_{i_j}$ and $X_{i_1}', \ldots, X_{i_j}'$. For instance, for $i \ne j$,  
\[
\mathfrak{h}_{ij} f(X) = \lVert f(X) - T_if(X) - T_jf(X) + T_{ij}f(X) \rVert_{i,j,\infty}.
\]
Using the definition \eqref{eq:DefiHigherOrderDerivatives}, we define tensors of $j$-th order differences as follows:
\begin{align*} 
	\big(\mathfrak{h}^{(j)}f(X)\big)_{i_1 \ldots i_j} = 
	\begin{cases} 
		\mathfrak{h}_{i_1 \ldots i_j}f(X), & 
		\text{if $i_1, \ldots, i_j$ are distinct}, \\ 0, & \text{else}. 
	\end{cases}
\end{align*}
Typically, we omit writing the random vector $X$, i.\,e. we write $f$ instead of $f(X)$ and $\mathfrak{h}^{(j)}f$ instead of $\mathfrak{h}^{(j)}f(X)$.

Note that $\abs{\mathfrak{h}^{(j)} f}_\mathrm{HS}$ and $\abs{\mathfrak{h}^{(j)} f}_\mathrm{op}$ are again difference operators. We are now ready to illustrate how and why Assumptions \ref{ass:Set2} differ from Assumptions \ref{ass:Set1}. First, by \cite[Le.\ 2.2]{BGS19}, we have that
\begin{equation}
	\label{eq:Rekursion}
	|\mathfrak{h}|\mathfrak{h}^{(j)}f(X)|_\mathrm{HS}| \le |\mathfrak{h}^{(j+1)}f(X)|_\mathrm{HS},
\end{equation}
which establishes Assumption \ref{ass:Set1} (2) for Hilbert--Schmidt norms. The arguments leading to \eqref{eq:Rekursion} are simple (essentially, everything follows by triangle inequality), however, they cannot be adapted to operator norms. In fact, an analogue of \eqref{eq:Rekursion} for operator norms does not hold true, which can already be seen in second order statistics of Rademacher variables like $\sum_{i=1}^{n} X_iX_{i+1}$ (setting $X_{n+1} = X_1$).

The situation changes if we also work with $\mathfrak{h}^+$. The crucial step is the following observation: if $A$ is a $j$-tensor, the supremum in the definition of $\abs{A}_\mathrm{op}$ is attained, and if $A$ is a non-negative tensor (i.\,e. $A_{i_1 \ldots i_j} \ge 0$ for all $i_1, \ldots, i_j$), the maximizing vectors $\tilde{v}^1, \ldots, \tilde{v}^j$ can be chosen to have all positive entries. Indeed, since $\tilde{v}^1_{i_1} \cdots \tilde{v}^k_{i_j} \le \abs{\tilde{v}^1_{i_1} \cdots \tilde{v}^k_{i_j}}$, we can define $\abs{\tilde{v}}^\ell$ by taking the absolute value elementwise.

\begin{lemma}[\cite{GSS21a}, Le.\ 1]		\label{lem:recursiveEstimateOperatorNorms}
	For any $j \in \N$,
	\[
	\abs{\mathfrak{h}^+ \abs{\mathfrak{h}^{(j)} f(X)}_\mathrm{op}} \le \abs{\mathfrak{h}^{(j+1)} f(X)}_\mathrm{op}.
	\]
\end{lemma}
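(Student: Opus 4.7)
The plan is to combine the non-negativity of $A := \mathfrak{h}^{(j)}f(X)$ with the observation recorded just before the lemma: the supremum defining $|A|_{\mathrm{op}}$ is attained at some non-negative unit vectors $\tilde{v}^1, \ldots, \tilde{v}^j \in \R^n$ (depending on $X$). Setting $g := |A|_{\mathrm{op}}$, the same $\tilde v^\ell$ are admissible (though typically suboptimal) for the sup defining $T_i g(X) = |T_iA(X)|_{\mathrm{op}}$, which gives
\[
g(X) - T_i g(X) \le \sum_{i_1, \ldots, i_j} \bigl( A_{i_1\ldots i_j}(X) - T_i A_{i_1\ldots i_j}(X) \bigr)\, \tilde{v}^1_{i_1} \cdots \tilde{v}^j_{i_j}.
\]
Taking the positive part, using $(\sum_k a_k)_+ \le \sum_k (a_k)_+$ together with the non-negativity of the entries of $\tilde v^\ell$, and then applying $\|\cdot\|_{X_i',\infty}$ (under which the $\tilde v^\ell(X)$ are constant) reduces matters to the coordinatewise bound
\[
\mathfrak{h}^+_i A_{i_1\ldots i_j}(X) \le B_{i,i_1,\ldots,i_j}(X) \qquad \text{for all } i, i_1, \ldots, i_j,
\]
where $B := \mathfrak{h}^{(j+1)}f(X)$.

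The main obstacle sits in the \emph{diagonal} case $i \in \{i_1, \ldots, i_j\}$: here $B_{i, i_1, \ldots, i_j}(X) = 0$ by construction, so one must show that the left-hand side vanishes as well. The resolution lies in the convention fixed in \eqref{eq:DefiHigherOrderDerivatives}: the $L^\infty$-norm $\|\cdot\|_{i_1, \ldots, i_j, \infty}$ runs over \emph{both} $X_{i_s}$ and $X_{i_s}'$, so $A_{i_1\ldots i_j}$ is a function of $X_{\{i_1, \ldots, i_j\}^c}$ only. Consequently, whenever $i$ coincides with some $i_s$, $A_{i_1\ldots i_j}$ is invariant under $X_i \mapsto X_i'$, so $T_iA_{i_1\ldots i_j} = A_{i_1\ldots i_j}$ and $\mathfrak{h}^+_i A_{i_1\ldots i_j}(X) = 0$, as required. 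For tuples in which all of $i, i_1, \ldots, i_j$ are distinct, the reverse triangle inequality $\bigl|\|F\|_\infty - \|T_i F\|_\infty\bigr| \le \|(\mathrm{Id} - T_i)F\|_\infty$ applied to $F := \prod_{s=1}^j (\mathrm{Id} - T_{i_s})f$, followed by an additional $L^\infty$-norm in $X_i'$, yields exactly $\|(\mathrm{Id} - T_i)F\|_{i, i_1, \ldots, i_j, \infty} = B_{i, i_1, \ldots, i_j}(X)$.

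Combining the two pieces gives $\mathfrak{h}^+_i g(X) \le (B(X) \cdot \tilde v^1 \cdots \tilde v^j)_i$ for every $i$, where the right-hand side denotes the contraction of $B$ with the $\tilde v^\ell$ in the last $j$ slots. Squaring, summing over $i$, and using the dual description $|Bw^1 \cdots w^j|_2 = \sup_{|u| \le 1}\langle B, u \otimes w^1 \otimes \cdots \otimes w^j\rangle \le |B|_{\mathrm{op}}$ (valid whenever $|w^\ell| \le 1$) produces
\[
|\mathfrak{h}^+ g(X)|^2 \le |B(X) \cdot \tilde v^1 \cdots \tilde v^j|_2^2 \le |B(X)|_{\mathrm{op}}^2,
\]
which is the claim after taking square roots.
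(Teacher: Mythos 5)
Your proof is correct and follows essentially the same route as the paper's: insert the non-negative maximizers $\tilde v^1,\ldots,\tilde v^j$, use monotonicity of $x\mapsto x_+$ and the (reverse) triangle inequality to reduce to entrywise bounds by $\mathfrak{h}_{i i_1\ldots i_j}f$, and then bound the Euclidean norm of the resulting contraction of $\mathfrak{h}^{(j+1)}f$ with unit vectors by $\lvert \mathfrak{h}^{(j+1)}f\rvert_{\mathrm{op}}$. Your explicit verification that the diagonal tuples $i\in\{i_1,\ldots,i_j\}$ contribute nothing (since $\mathfrak{h}_{i_1\ldots i_j}f$ does not depend on $X_i$, $X_i'$) is a detail the paper passes over silently, but the argument is the same.
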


\begin{proof}
	We have
	\begin{align*}
		&\abs{\mathfrak{h}^+ \abs{\mathfrak{h}^{(j)} f}_\mathrm{op}}^2 = \sum_{i= 1}^n \norm*{\left( \abs{\mathfrak{h}^{(j)} f}_\mathrm{op} - \abs{\mathfrak{h}^{(j)} T_i f}_\mathrm{op} \right)_+}_{i,\infty}^2 \\
		&= \sum_{i=1}^n \bnorm{\left( \sup_{v^\ell} \skal{v^1 \cdots v^j, \mathfrak{h}^{(j)}f} - \sup_{v^\ell} \skal{v^1 \cdots v^j, \mathfrak{h}^{(j)}T_i f} \right)_+}_{i,\infty}^2 \\
		&\le \sum_{i = 1}^n \bnorm{\bigg(\skal{\tilde{v}^1\cdots \tilde{v}^j, \mathfrak{h}^{(j)}f - \mathfrak{h}^{(j)} T_i f} \bigg)_+}_{i,\infty}^2 \\
		&\le \sum_{i = 1}^n \bnorm{\sum_{i_1,\ldots,i_j} \tilde{v}^1_{i_1} \cdots \tilde{v}^j_{i_j} \bnorm{(\Id - T_i) \prod_{\ell = 1}^j (\Id - T_{i_\ell})f}_{i_1 \cdots i_j,\infty} }_{i,\infty}^2 \\
		&\le \sum_{i =1}^n \bigg( \sum_{i_1, \ldots, i_j} \tilde{v}^1_{i_1} \cdots \tilde{v}^j_{i_j} \mathfrak{h}_{i i_1 \cdots i_j}f \bigg)^2 \\
		&= \bigg( \sup_{v^{j+1} : \abs{v^{j+1}} \le 1} \sum_{i_{j+1}=1}^n \sum_{i_1, \ldots, i_j} \tilde{v}_{i_1}^1 \cdots \tilde{v}^j_{i_j} v^{j+1}_{i_{j+1}} \mathfrak{h}_{i_1 \cdots i_{j+1}} f \bigg)^2 \\
		&\le \bigg( \sup_{v^1, \ldots, v^{j+1} : \abs{v^\ell} \le 1} \sum_{i_1, \ldots, i_{j+1}} v_{i_1}^1 \cdots v_{i_{j+1}}^{j+1} \mathfrak{h}_{i_1 \cdots i_{j+1}}f \bigg)^2 \\
		&= \abs{\mathfrak{h}^{(j+1)}f}_\mathrm{op}^2
	\end{align*}
	Here, in the first inequality we insert the vectors $\tilde{v}^1, \ldots, \tilde{v}^j$ maximizing the supremum and use the monotonicity of $x \mapsto x_+$, and the second and third inequality follow from the triangle inequality. Taking the square root yields the claim.
\end{proof}

\subsection{Weakly dependent random variables}

For results beyond independence, the dependencies clearly cannot be arbitrary, but we have to establish concepts of ``weak'' dependence, i.\,e., situations in which the dependencies can still be controlled and the independent case can be ``imitated'' in whatever way. The central idea is to work with logarithmic Sobolev inequalities again, but now adapted to discrete-type situations (cf.\ e.\,g.\ \cite{DSC96,BT06}).

As we will see in detail slightly later, the difference operator $|\mathfrak{h}|$ from the previous section is not a suitable choice for the entropy method however. Instead, we will work with an $L^2$-difference operator, defined as follows. Assume that $\mathcal{X} = \otimes_{i = 1}^n \mathcal{X}_i$ for some finite sets $\mathcal{X}_1, \ldots, \mathcal{X}_n$, equipped with a probability measure $\mu$. For a vector $x = (x_1, \ldots, x_n)$ and $I \subset \{1,\ldots,n\}$ we write $x_I \coloneqq (x_i)_{i \in I}$ and $x_{I^c} \coloneqq (x_i)_{i \notin I}$. If $I = \{ j\}$ for some $j \in \{1,\ldots,n\}$ we abbreviate it as $x_j$ and $x_{j^c}$. Let $\mu(\cdot \mid x_{i^c})$ denote the conditional measure (interpreted as a measure on $\mathcal{X}_i$) and $\mu_{i^c}$ the marginal on $\otimes_{j \ne i} \mathcal{X}_j$. Finally, set
\begin{align*}
	\abs{\dpartial f}^2(x) &\coloneqq \sum_{i = 1}^n (\dpartial_i f(x))^2 \coloneqq \sum_{i = 1}^n \mathrm{Var}_{\mu(\cdot \mid x_{i^c})}(f(x_{i^c}, \cdot)) \\
	&=\sum_{i = 1}^n \frac{1}{2} \iint (f(x_{i^c},y) - f(x_{i^c},y'))^2 d\mu(y \mid x_{i^c}) d\mu(y' \mid x_{i^c}).
\end{align*}
Roughly speaking, $|\dpartial f|$ is based on a sort of ``conditional variances'' in the respective components. It frequently appears in the context of logarithmic Sobolev inequalities for Markov chains as developed in \cite{DSC96}.
This difference operator appears naturally in the Dirichlet form associated to the Glauber dynamic of $\mu$, given by
\begin{align*}	
	\mathcal{E}(f,f) \coloneqq \sum_{i = 1}^n \int \Var_{\mu(\cdot \mid x_{i^c})}(f(x_{i^c},\cdot)) d\mu_{i^c}(x_{i^c}) = \int \abs{\dpartial f}^2 d\mu.
\end{align*}
In the main theorem of this section, we require a $\dpartial$--LSI for the underlying probability measure $\mu$. A number of models which satisfy this assumption will be discussed below.

Before we discuss $\dpartial$-LSIs in more detail, let us first see how Assumptions \ref{ass:Set2} are verified in this situation. The main tool is the following result.

\begin{proposition}[\cite{GSS21a}, Prop.\ 5]	\label{prop:momentinequality}
	Let $\mu$ be a measure on a product of finite sets satisfying a $\dpartial\mathrm{-LSI}(\sigma^2)$. Then, for any $f \in L^\infty(\mu)$ and any $r \ge 2$ we have
	\begin{equation} \label{eqn:fMinusEfLP}
		\norm{f - \IE f}_r \le (\sigma^2(r-3/2)/2)^{1/2} \norm{\mathfrak{h} f}_r
	\end{equation}
	and
	\begin{equation}\label{eqn:fMinusEfLP2}
		\norm{(f - \IE f)_+}_r \le (2\sigma^2 (r-3/2))^{1/2} \norm{\mathfrak{h}^+ f}_r.
	\end{equation}
\end{proposition}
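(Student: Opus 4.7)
My plan is to mirror the Aida--Stroock derivation of \eqref{eq:AS} in this discrete framework, with the $L^2$ difference operator $\dpartial$ playing the role of the Euclidean gradient. The starting point is the identity \eqref{eq:Abl}, $\frac{d}{dr}\norm{g}_r^2 = (2/r^2)\norm{g}_r^{2-r}\Ent_\mu(|g|^r)$, which reduces the task to an estimate on $\Ent_\mu(|g|^r)$. Applying the $\dpartial$-LSI to the test function $u = |g|^{r/2}$ gives $\Ent_\mu(|g|^r) \le 2\sigma^2 \int |\dpartial u|^2\,d\mu$, so the crux of the matter is a ``discrete chain rule'' controlling $|\dpartial u|^2$ in terms of $|g|^{r-2}$ and $|\mathfrak{h} g|^2$.

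For each $i$, the elementary inequality $(a^{r/2} - b^{r/2})^2 \le (r^2/4)\max(a,b)^{r-2}(a-b)^2$ (valid for $a,b \ge 0$ and $r \ge 2$), combined with the pointwise bound $(g(x_{i^c},y) - g(x_{i^c},y'))^2 \le \mathfrak{h}_i g(x_{i^c})^2$, yields $(\dpartial_i u)^2 \le (r^2/4)(\mathfrak{h}_i g)^2\,\mathbb{E}_i[|g|^{r-2}]$. Summing in $i$, integrating against $\mu$ and applying H\"older's inequality gives $\int |\dpartial u|^2\,d\mu \le (r^2/4)\norm{g}_r^{r-2}\norm{\mathfrak{h} g}_r^2$. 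Plugging this into the entropy bound and feeding back into \eqref{eq:Abl} produces a differential inequality of the form $\frac{d}{dr}\norm{g}_r^2 \le \sigma^2 \norm{\mathfrak{h} g}_r^2$; integrating from $r = 2$ to $r$, and treating the base term $\norm{f - \E f}_2^2$ via the $\dpartial$-Poincar\'e inequality (which follows from the LSI with the same constant) together with the Popoviciu-type estimate $|\dpartial h|^2 \le \tfrac{1}{4}|\mathfrak{h} h|^2$, yields a moment bound of the stated shape.

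For the positive-part inequality \eqref{eqn:fMinusEfLP2}, I would repeat the argument with $u = (f - \E f)_+^{r/2}$: the same chain-rule estimate now only picks up positive differences $(f(x_{i^c},y) - f(x_{i^c},y'))_+$, which are controlled by $\mathfrak{h}^+_i f(x)$, producing $\int |\dpartial u|^2\,d\mu \le (r^2/4)\norm{(f - \E f)_+}_r^{r-2}\norm{\mathfrak{h}^+ f}_r^2$; the remaining integration and the handling of $\norm{(f - \E f)_+}_2^2$ proceed analogously using a one-sided Popoviciu estimate. The main obstacle I foresee lies in the precise tracking of constants: the direct implementation of the recipe above produces a coefficient of the form $\sigma^2(r - 7/4)$ in the squared moment bound, rather than the sharper $\sigma^2(r - 3/2)/2$ stated in the proposition. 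Recovering the exact claimed constants will likely require either a sharpened version of the discrete chain-rule inequality, a more careful balance between the Aida--Stroock integration step and the base-term Poincar\'e step, or a modified entropy-to-moment argument in the spirit of the $\Phi$-Sobolev inequalities discussed in the previous section.
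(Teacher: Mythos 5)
Your strategy coincides with the one the paper itself only sketches: run the Aida--Stroock differentiation argument behind \eqref{eq:AS} with the $\dpartial$-LSI, and replace the missing chain rule by a pointwise power inequality. One streamlining remark: rather than the symmetric bound with $\max(a,b)^{r-2}$, it is cleaner to use from the start the one-sided inequality the paper quotes, $(a^{r/2}-b^{r/2})_+^2\le \frac{r^2}{4}a^{r-2}(a-b)_+^2$, together with the symmetrization identity $\Var_{\mu(\cdot\mid x_{i^c})}(u)=\iint (u(y)-u(y'))_+^2\,d\mu(y\mid x_{i^c})\,d\mu(y'\mid x_{i^c})$ (the factor $\tfrac12$ in the definition of $\dpartial_i$ exactly absorbs the doubling coming from symmetrization). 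This gives $(\dpartial_i\,|g|^{r/2})^2\le\frac{r^2}{4}(\mathfrak{h}_ig)^2\,\E_i\,|g|^{r-2}$ in one line and specializes verbatim to $u=(f-\E f)_+^{r/2}$ with $\mathfrak{h}_i^+$, so both halves of the proposition come out of a single computation; your $\max(a,b)$ variant reaches the same estimate but, as you note, does not directly yield the one-sided case.

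The substantive point is the one you flag yourself: the constant in the two-sided bound. The pipeline above yields the differential inequality $\frac{d}{dr}\norm{g}_r^2\le\sigma^2\norm{\mathfrak{h}g}_r^2$ and the base term $\norm{g}_2^2\le\frac{\sigma^2}{4}\norm{\mathfrak{h}g}_2^2$ (Poincar\'e plus Popoviciu), hence $\norm{f-\E f}_r^2\le\sigma^2(r-\tfrac74)\norm{\mathfrak{h}f}_r^2$, which for every $r>2$ is strictly weaker than the stated $\sigma^2(r-\tfrac32)/2$; since the two differential inequalities differ by a factor $2$ in their leading coefficient, no choice of base point for the integration can repair this, so as written you prove \eqref{eqn:fMinusEfLP} only up to an absolute constant. (By contrast, the same computation gives $\norm{(f-\E f)_+}_r^2\le\sigma^2(r-1)\norm{\mathfrak{h}^+f}_r^2\le 2\sigma^2(r-\tfrac32)\norm{\mathfrak{h}^+f}_r^2$, so \eqref{eqn:fMinusEfLP2} is fully recovered, with room to spare.) The sharper prefactor in the two-sided inequality is exactly where the proof in \cite{GSS21a} does additional work. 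For everything downstream in this paper -- Assumptions \ref{ass:Set2} (1)\&(2) and Theorem \ref{thm:partialLSITails} only use the structural form $L\sigma r^{1/2}$ -- the discrepancy is harmless, but it should be stated explicitly if the proposition is quoted with its constants.
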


To establish Proposition \ref{prop:momentinequality}, one mimics the proof of \eqref{eq:AS}. The lack of the chain rule inequality in discrete settings can be compensated by invoking the pointwise inequality $(a^{r/2} - b^{r/2})_+^2 \le \frac{r^2}{4} a^{r-2}(a-b)_+^2$, valid for all $a,b \ge 0$ and all $r \ge 2$.

Proposition \ref{prop:momentinequality} hence establishes Assumptions \ref{ass:Set2} (1)\&(2), while (3) is satisfied by means of Lemma \ref{lem:recursiveEstimateOperatorNorms} once again. In particular, we immediately arrive at the following analogue of Theorem \ref{th:BBLM}, whose second part mirrors [\cite{GSS21a}, Th.\ 2].

\begin{satz}		\label{thm:partialLSITails}
	Let $\mu$ be a measure on a product of finite sets satisfying a $\dpartial\mathrm{-LSI}(\sigma^2)$, and let $f \in L^\infty(\mu)$.
	\begin{enumerate}
	        \item Assuming that $\norm{\mathfrak{h}^{(j)} f}_{\mathrm{op},1} \le 1$ for all $j=1,\ldots,d-1$ and $\norm{\mathfrak{h}^{(d)} f}_{\mathrm{op},\infty} \le 1$, it holds that
	        \[
	        \E \exp\Big\{c |f(X)|^{2/d} \Big\} \le 2,
	        \]
	        where a possible choice of the constant $c$ is given by
	        \[
	        c = \frac{(\sqrt{2}-1)^2}{16 \kappa e}.
	        \]
	        \item For any $t\ge 0$, it holds that
	        \[
	        \P(|f(X)| \ge t)\le
	        2 \exp\Big\{- \frac{C}{d^2} \min\Big(\min_{j=1, \ldots,d-1} \Big(\frac{t}{\norm{\mathfrak{h}^{(j)} f}_{\mathrm{op},1}}\Big)^{2/j}, \Big(\frac{t}{\norm{\mathfrak{h}^{(d)} f}_{\mathrm{op},\infty}}\Big)^{2/d}\Big)\Big\},
	        \]
	        where a possible choice of the constant $C$ is given by
	    	\[
	    	C = \frac{\log(2)}{4 \kappa e^2} > \frac{1}{34}.
	    	\]
	 \end{enumerate}
\end{satz}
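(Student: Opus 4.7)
The plan is to apply Theorem \ref{thm:HigherOrder2} with the choices $\Gamma(f) = \lvert \mathfrak{h} f \rvert$, $\Gamma^+(f) = \lvert \mathfrak{h}^+ f \rvert$, and $\Gamma^{(j)}(f) = \lvert \mathfrak{h}^{(j)} f \rvert_{\mathrm{op}}$. With these choices, verifying the three items of Assumptions \ref{ass:Set2} reduces entirely to results already established earlier in this section.

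For items (1) and (2), Proposition \ref{prop:momentinequality} supplies both bounds immediately. Using the crude estimate $r - 3/2 \le r$ valid for $r \ge 2$, the two inequalities there become
\[
\lVert f - \E f \rVert_r \le \tfrac{\sigma}{\sqrt{2}}\, r^{1/2} \lVert \mathfrak{h} f \rVert_r, \qquad \lVert (f - \E f)_+ \rVert_r \le \sqrt{2}\,\sigma\, r^{1/2} \lVert \mathfrak{h}^+ f \rVert_r,
\]
so one may take $p = 2$, $r_0 = 2$, and the common value $L = \sqrt{2}$ (the larger of the two constants needed, under which the first inequality also remains valid). For item (3), the required recursion $\lvert \mathfrak{h}^+ \lvert \mathfrak{h}^{(j)} f \rvert_{\mathrm{op}} \rvert \le \lvert \mathfrak{h}^{(j+1)} f \rvert_{\mathrm{op}}$ is exactly Lemma \ref{lem:recursiveEstimateOperatorNorms}. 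With all three items in place, both parts of the theorem follow by inserting these parameters into Theorem \ref{thm:HigherOrder2}~(1) and (2); using $\max(\sqrt{2}^{1/d}, \sqrt{2}) = \sqrt{2}$ and $\max(r_0, p/d) = 2$ for every $d \ge 1$, the formulas for $c_{p,d,r_0,L}$ and $C_{p,r_0,L}$ simplify to $(\sqrt{2}-1)^2/(16e)$ and $\log(2)/(4 e^2)$ respectively, with the additional factor $\kappa > 1$ in the stated constants being harmless and included only for uniformity with Theorem \ref{thm:HigherOrderIndep}.

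The single conceptual point in the argument, and the reason the framework of Assumptions \ref{ass:Set2} is needed rather than that of Assumptions \ref{ass:Set1}, is the necessity of passing to the positive part $\mathfrak{h}^+$ in the recursion: the naive analogue $\lvert \mathfrak{h} \lvert \mathfrak{h}^{(j)} f \rvert_{\mathrm{op}} \rvert \le \lvert \mathfrak{h}^{(j+1)} f \rvert_{\mathrm{op}}$ fails for operator norms, as already witnessed by second-order Rademacher forms such as $\sum_i X_i X_{i+1}$. Once this observation is absorbed, no further obstacle arises; the remainder is numerical bookkeeping to match the stated constants.
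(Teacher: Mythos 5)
Your proof follows exactly the route the paper takes: Proposition \ref{prop:momentinequality} supplies Assumptions \ref{ass:Set2}~(1)\&(2) (after bounding $r-3/2\le r$ and taking the common $L=\sqrt{2}$), Lemma \ref{lem:recursiveEstimateOperatorNorms} supplies (3) for the operator norms, and Theorem \ref{thm:HigherOrder2} then yields both parts, with the same correct observation that the extra factor $\kappa>1$ in the displayed constants only weakens them. The only caveat is purely numerical and lies in the paper's statement rather than in your argument: with the $\kappa$ retained one has $\log(2)/(4\kappa e^2)\approx 1/54$, so the asserted bound $C>1/34$ only holds for the cleaner constant $\log(2)/(4e^2)$ that your computation actually produces (and even then only as $>1/43$).
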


In principle, for $d=2$ we could now derive Hanson--Wright type inequalities again. Starting with a quadratic form involving some matrix $A=(a_{ij})_{ij}$, the difference operator $\mathfrak{h}$ would lead to bounds which involve the matrix $A^\mathrm{abs}$, defined by $A^\mathrm{abs}_{ij} = |a_{ij}|$ (cf.\ the discussion following \cite[Th.\ 1]{GSS21a}). A somewhat more general framework in which this slight pathology does not appear will be discussed in Section \ref{sec:PolCh}.

\subsubsection{Logarithmic Sobolev inequalities involving difference operators}

Let us discuss the $\mathfrak{d}$--LSI condition in more detail. In typical situations, the following observation plays a central role. Recall that for product measures, establishing a log-Sobolev-type inequality can be reduced to dimension $1$ by means of tensorization of the entropy functional as seen in a generalized sense in \eqref{eq:Tensorisierung}. The latter makes use of independence. Nevertheless, it turns out one can hope for ``approximate'' analogues even without independence, more precisely, inequalities of the form
\begin{equation}\label{eq:AT}
	\Ent_{\mu}(f^2) \le C \sum_{i = 1}^n \int \Ent_{\mu(\cdot \mid x_{i^c})}(f^2(x_{i^c}, \cdot)) d\mu_{i^c}(x_{i^c}),
\end{equation}
which is called approximate tensorization with constant $C$ or $\mathrm{AT}(C)$ in short. Once such an inequality is valid, one is again able to reduce to dimension $1$.

For a probability measure $\mu$ on a product of finite sets $\mathcal{X} = \otimes_{i = 1}^n \mathcal{X}_i$, any matrix $J = (J_{ij})_{i,j}$ with $J_{ii} = 0$ for all $i=1, \ldots, n$ and such that for any $x,y \in \mathcal{X}$ with $x_{j^c} = y_{j^c}$ it holds that
\[
d_\mathrm{TV}(\mu(\cdot|x_{i^c}), \mu(\cdot|y_{i^c})) \le J_{ij},
\]
where $d_\mathrm{TV}$ is the total variation distance. The matrix $J$ is called an \emph{interdependence matrix}. It measures the strength of the interactions between the random variables. In particular, if they are independent (equivalently, if $\mu$ is a product measure), we may set $J \equiv 0$.

Furthermore, define for any subset $S \subsetneq \{1, \ldots, n\}$ and any $i \notin S$ (writing $\mathcal{X}_S := \otimes_{i \in S} \mathcal{X}_i$ and $\mathcal{X}_{S^c}$ accordingly)
\begin{equation}\label{eqn:DefiBetaTilde}
  \tilde{\beta}_{i,S}(\mu) \coloneqq \inf_{\substack{x_S \in \mathcal{X}_S \\ \mu_S(x_S) > 0}} \inf_{\substack{y_{S^c} \in \mathcal{X}_{S^c} \\ \mu(y_{S^c}, x_{S}) > 0} }  \mu ( (y_{S^c})_i \mid x_S).
\end{equation}
If $S = \emptyset$, this reads $\tilde{\beta}_{i,\emptyset}(\mu) = \inf_{x \in \mathcal{X} : \mu(x) > 0} \mu(x_i).$ The interpretation of $\tilde{\beta}_{i,S}(\mu)$ is straightforward: For any admissible partial configuration $x_S \in \mathcal{X}_S$ all possible marginals are supported on points with probability at least $\tilde{\beta}_{i,S}(\mu)$. Now let
\begin{equation*} 
  \tilde{\beta}(\mu) \coloneqq \inf_{S \subsetneq \mathcal{I}} \inf_{i \notin S} \tilde{\beta}_{i,S}(\mu)
\end{equation*}
be the infimum of all $\tilde{\beta}_{i,S}(\mu)$. If $\mu$ is a probability measure, $\tilde{\beta}(\mu)$ reduces to the minimal probability of any atom in the marginal distributions.

We now have the following result, going back to \cite{Mar19} as well as \cite[Th.\ 4.2]{GSS19}.

\begin{proposition}\label{prop:AT+LSI}
Let $\mu$ be a probability measure on a product of of finite sets $\mathcal{X} = \otimes_{i = 1}^n \mathcal{X}_i$. Assume that there are $\alpha_1, \alpha_2 \in (0,1)$ such that $\tilde{\beta}(\mu) \ge \alpha_1$ and that there is some interdependence matrix $J$ such that $\lvert J \rvert_\mathrm{op} \le 1 - \alpha_2$.
\begin{enumerate}
\item The approximate tensorization property \eqref{eq:AT} holds true with $C= 1/(\alpha_1\alpha_2^2)$.
\item $\mu$ satisfies a $\dpartial$-LSI with constant $\sigma^2 = \log(\alpha_1^{-1})/(2\log(2)\alpha_1\alpha_2^2)$.
\end{enumerate}
\end{proposition}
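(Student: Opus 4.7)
The two claims are handled in order: (1) is the analytical heart of the proposition, (2) is an essentially mechanical reduction.

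For (1), the plan is to follow Marton's approach in \cite{Mar19} (cf.\ also \cite[Th.~4.2]{GSS19}). Two quantitative ingredients must be combined. The spectral hypothesis $|J|_{\mathrm{op}} \le 1-\alpha_2$ is the operator-norm form of Dobrushin's uniqueness condition; it forces the single-site heat-bath operators $P_i$ (which resample the $i$-th coordinate from $\mu(\cdot \mid x_{i^c})$) to contract at rate $\alpha_2$ in an appropriate transportation metric and hence, by a standard transportation--information argument, at rate $\alpha_2^2$ for relative entropy. The second ingredient is the positivity bound $\tilde\beta(\mu) \ge \alpha_1$: when one decomposes a relative entropy against $\mu$ into conditional relative entropies against the $\mu(\cdot \mid x_{i^c})$, the Radon--Nikodym derivatives that appear must be compared against single-site densities, and this comparison introduces a factor $\alpha_1^{-1}$. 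Decomposing $\Ent_\mu(f^2)$ along an enumeration of coordinates and combining these two bounds yields the AT inequality with constant $C = 1/(\alpha_1\alpha_2^2)$.

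For (2), given (1) we reduce to a one-dimensional LSI. Each conditional $\mu(\cdot \mid x_{i^c})$ is a probability on the finite set $\mathcal{X}_i$ with all atoms bounded below by $\alpha_1$; moreover, for such a single-variable measure $\nu$ the Dirichlet form naturally associated to $\dpartial$ is exactly $\Var_\nu$. A Diaconis--Saloff-Coste type bound (depending on the minimal atom) then gives
\[
\Ent_{\mu(\cdot \mid x_{i^c})}\bigl(f^2(x_{i^c},\cdot)\bigr) \le \frac{\log(1/\alpha_1)}{\log 2}\,(\dpartial_i f)^2(x).
\]
Integrating against $\mu_{i^c}$, summing in $i$, and inserting into the AT bound from (1) produces
\[
\Ent_\mu(f^2) \le \frac{\log(1/\alpha_1)}{\log(2)\,\alpha_1\alpha_2^2} \int |\dpartial f|^2\, d\mu,
\]
which is the claimed $\dpartial$-LSI with $2\sigma^2 = \log(\alpha_1^{-1})/(\log(2)\alpha_1\alpha_2^2)$.

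The main obstacle is (1). The difficulty is to exploit the positivity $\alpha_1$ and the Dobrushin mixing $\alpha_2$ simultaneously with the correct exponents $\alpha_1^{-1}$ and $\alpha_2^{-2}$: the square on $\alpha_2$ is the expected rate coming from two successive Glauber sweeps, while the $\alpha_1^{-1}$ reflects the loss when comparing an arbitrary $\rho \ll \mu$ against its single-site update via a Radon--Nikodym change. Marton's coupling argument is what makes the two effects compose cleanly; step (2) is then a plug-in.
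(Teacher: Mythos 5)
The paper offers no proof of this proposition --- it is quoted from \cite{Mar19} and \cite[Th.\ 4.2]{GSS19} --- so your overall plan (Marton's entropy decomposition for (1), approximate tensorization plus a single-site log-Sobolev bound for (2)) is indeed the intended route. The problem is that neither half is actually carried out correctly. For (1), the sentence ``decomposing $\Ent_\mu(f^2)$ along an enumeration of coordinates and combining these two bounds yields the AT inequality with constant $1/(\alpha_1\alpha_2^2)$'' \emph{is} the theorem; the exponents on $\alpha_1$ and $\alpha_2$ are precisely what must be verified, and your heuristics for them are not the actual mechanisms. In Marton's argument one expands the relative entropy of $\rho = f^2\mu/\int f^2\,d\mu$ against $\mu$ by the chain rule along an ordering of the coordinates; the resulting conditional total-variation distances satisfy a vector inequality governed by $J$, so the Dobrushin hypothesis enters as $\lvert (I-J)^{-1}\rvert_\mathrm{op} \le 1/\alpha_2$, and the square arises because entropies are compared with \emph{squared} total-variation distances via Pinsker's inequality --- not from ``two successive Glauber sweeps''. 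The factor $\alpha_1^{-1}$ comes from the reverse comparison (bounding a conditional relative entropy by a squared total variation), which is exactly where $\tilde\beta(\mu)\ge\alpha_1$ is indispensable. Without writing this out, the constant in (1) is asserted, not proved.

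In (2) the displayed single-site inequality is false. For \emph{any} probability measure $\nu$ on a set with at least two atoms, taking $f = 1+\epsilon g$ with $\E_\nu g = 0$ gives $\Ent_\nu(f^2) = 2\epsilon^2\Var_\nu(g) + o(\epsilon^2)$ while $\Var_\nu(f) = \epsilon^2\Var_\nu(g)$, so the best possible constant $\lambda$ in $\Ent_\nu(f^2) \le \lambda\,\Var_\nu(f)$ is never below $2$. Your claimed $\lambda = \log(1/\alpha_1)/\log 2$ drops below $2$ as soon as $\alpha_1 > 1/4$; for instance, for a conditional law uniform on two points ($\alpha_1 = 1/2$) you assert $\lambda = 1$, whereas the sharp constant is $2$. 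The correct one-dimensional ingredient is the Diaconis--Saloff-Coste bound $\lambda \le \log(1/p_\ast - 1)/(1-2p_\ast)$ (always $\ge 2$), and the bookkeeping of where the factor $\alpha_1^{-1}$ sits --- in the AT constant of part (1) versus in the single-site constant --- must then be redone. As written, your chain (1) $\Rightarrow$ (2) does not close, even though the product of your two constants happens to reproduce the $\sigma^2$ in the statement.
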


The condition $\lvert J \rvert_\mathrm{op} \le 1 - \alpha_2$ is also called the Dobrushin condition.

\begin{bemerkung}
If $\mu$ is a product measure, we may set $\alpha_2=1$. However, in any non-trivial example, we clearly do not have $\alpha_1=1$. On the other hand, for product measures, \eqref{eq:AT} always holds with $C=1$, which thus cannot be recovered from Proposition \ref{prop:AT+LSI}. The factor $\alpha_1$ cannot be avoided by the proof strategy going back to \cite{Mar19}. It is of course tempting to ask whether it is an artefact of the proof.
\end{bemerkung}

Proposition \ref{prop:AT+LSI} and all the quantities appearing therein can easily be generalized to products $\mathcal{X}^\mathcal{I}$, where $\mathcal{X}$ and $\mathcal{I}$ are finite spaces. Probability measures $\mu$ on such a space $\mathcal{X}^\mathcal{I}$ are called spin systems. Spin systems for which $\dpartial$-LSIs have been shown include the following models:
\begin{itemize}
\item the Ising model (\cite{BGS19}),
\item the exponential random graph model (\cite{SS20}),
\item the random colouring model (\cite{SS20}),
\item the hard-core model (\cite{SS20}),
\item the (block) Potts model (\cite{KLS21}).
\end{itemize}

Moreover, there are various results on the interplay between difference operators and log-Sobolev inequalities. Let us provide a brief summary.
\begin{itemize}
\item In principle, the difference operator $|\dpartial f|$ can be extended to products of Polish spaces by means of the disintegration theorem. However, it turns out that the $\dpartial$-LSI property in fact requires the underlying space to be finite. See \cite[Prop.\ 6]{GSS21a}.
\item Any product measure satisfies an $\mathfrak{h}$-LSI with universal constant $\sigma^2=1$. More generally, approximate tensorization \eqref{eq:AT} with constant $C$ implies an $\mathfrak{h}$-LSI with the same constant $C$. However, there is no way to derive concentration results from these facts by means of the entropy method. See \cite[Th.\ 7]{GSS21a} and the discussion thereafter.
\end{itemize}

\begin{bemerkung}
	There is yet a different (but still unexplored) approach for proving (higher order) concentration results without independence, namely in terms of approximate tensorization of $\Phi$-entropies. Indeed, revisiting the proof of Theorem \ref{th:BBLM} once again, all the arguments remain valid without assuming independence if an analogue of \eqref{eq:Tensorisierung} holds. More precisely, one has to require that
	\[
	\mathbb{E}\, |g|^q - \big(\mathbb{E}\, |g|\big)^q \le C
	\mathbb{E}\, \sum_{i=1}^{n} \Big(\mathbb{E}_i\, |g|^q - 
	\big(\mathbb{E}_i\, |g|\big)^q\Big)
	\]
	for all $q \in (1,2]$ with some constant $C$ which holds uniformly in $q$. For independent random variables, one has $C=1$ as recalled in the course of the proof. Whether such a property can be verified in any other situation does not seem clear at present, but if so everything else will adapt to this situation and one obtains a class of new concentration results.
\end{bemerkung}

\subsection{Suprema of polynomial chaos}\label{sec:PolCh}

Let us briefly provide a final example where the variants presented in Proposition \ref{prop:HigherOrder3} are needed. We shall derive uniform concentration bounds for families of polynomials.

In detail, we will focus on the following setup. Let $\mathcal{I}_{n,d}$ denote the family of subsets of $\{1, \ldots, n\}$ with $d$ elements, fix a real Banach space $(\mathbb{B}, \norm{\cdot})$ and a compact subset $\mathcal{T} \subset \mathbb{B}^{\mathcal{I}_{n,d}}$. For $x \in \R^n$, we define
\begin{align}		\label{eqn:fTauNew}
	f(x) \coloneqq f_{\mathcal{T}}(x) \coloneqq \sup_{t \in \mathcal{T}}  \Big \lVert \sum_{I \in \mathcal{I}_{n,d}} x_I t_I \Big\rVert,
\end{align} 
where $x_I \coloneqq \prod_{i \in I} x_i$. Write $\nabla^{(j)} f_t(x)$ for the $k$-tensor of all partial derivatives of order $j$. For any $j \in \{1, \ldots, d\}$, we set
\[
W_j = W_j(x) = \sup_{t \in \mathcal{T}} \abs{\nabla^{(j)} f_t(x)}_{\mathrm{op}}.
\]
Moreover, write
\[
\tilde{W}_j = \tilde{W}_j(x) = \lvert (\sup_{t \in \mathcal{T}} \lVert \partial_{i_1 \ldots i_j} f_t(x) \rVert)_{i_1, \ldots, i_j} \rvert_\mathrm{op}.
\]
Here, one first takes the supremum in $\mathcal{T}$ of the Banach space norm of every entry of $\nabla^{(j)} f_t$ and only then the operator norm. Obviously, $W_j \le \tilde{W}_j$ for all $j \in \{1, \ldots, d\}$.

Concentration properties for functionals as in \eqref{eqn:fTauNew} have been studied for independent Rademacher variables $X_1, \ldots, X_n$ (i.\,e. $\IP(X_i = +1) = \IP(X_i = -1) = 1/2$) and $\mathbb{B} = \mathbb{R}$ in \cite[Th.\ 14]{BBLM05} for all $d \ge 2$, and under certain technical assumptions in \cite{Ada15}. In \cite{GSS21a}, the following result was shown.

\begin{satz}			\label{theorem:ChaosInIndependentOrPartialLSI}
	Let $\mu$ be a probability measure with support in $[a,b]^n$ for some real numbers $a < b$ which satisfies a $\dpartial\mathrm{-LSI}(\sigma^2)$. For $f = f(x)$ as in \eqref{eqn:fTauNew} and all $r \ge 2$ we have
	\begin{align*}
		\norm{(f - \IE f)_+}_r &\le \sum_{j = 1}^d (2\sigma^2 (b-a)^2 (r-3/2))^{j/2} \IE W_j, \\
		\norm{f - \IE f}_r &\le \sum_{j = 1}^d (2(b-a)^2 r)^{j/2} \IE\tilde{W}_j.
	\end{align*}
	Consequently, for any $t \ge 0$
	\begin{align*}		\label{eqn:ConcentrationEstimateNumber1}
			\mu \left( f - \IE f \ge t \right) &\le 2 \exp \Big( - \frac{1}{2\sigma^2(b-a)^2} \min_{j=1,\ldots,d} \Big( \frac{t}{de \IE W_j} \Big)^{2/j} \Big) \\
			&\le 2 \exp \Big( -\frac{1}{2e^2\sigma^2(b-a)^2d^2} \min_{j=1,\ldots,d} \Big( \frac{t}{\IE W_j} \Big)^{2/j}  \Big),
	\end{align*}
	and the same inequalities hold for $\mu(|f-\E f| \ge t)$ with $\IE W_j$ replaced by $\IE \tilde{W}_j$.
\end{satz}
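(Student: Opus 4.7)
The plan is to iterate the moment inequalities of Proposition \ref{prop:momentinequality} (respectively their independent-case analog from Theorem \ref{th:BBLM}), exploiting the multilinearity of each $f_t(x) = \sum_{I \in \mathcal{I}_{n,d}} x_I t_I$ to convert discrete differences into partial derivatives.

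First I would apply Proposition \ref{prop:momentinequality} to $f$ itself, obtaining $\|(f - \IE f)_+\|_r \le (2\sigma^2(r-3/2))^{1/2}\|\mathfrak h^+ f\|_r$ and $\|f - \IE f\|_r \le (\sigma^2(r-3/2)/2)^{1/2}\|\mathfrak h f\|_r$. Everything then reduces to controlling the $L^r$ norms of $\mathfrak h^+ f$ and $\mathfrak h f$. For the one-sided version, the key multilinearity trick is to introduce an attained maximizer $t^\ast(x) \in \arg\sup_t \lVert\sum_I x_I t_I\rVert$ (existing by compactness of $\mathcal T$): since $f \ge f_{t^\ast(x)}$ with equality at $x$ and since $\partial_i f_t$ does not depend on $x_i$ by multilinearity,
\begin{align*}
(f(x) - T_i f(x))_+ \le (f_{t^\ast(x)}(x) - f_{t^\ast(x)}(T_i x))_+ = \bigl((x_i - x_i')\,\partial_i f_{t^\ast(x)}(x)\bigr)_+.
\end{align*}
Optimizing over $x_i' \in [a,b]$ gives $\mathfrak h^+_i f(x) \le (b-a)\lvert\partial_i f_{t^\ast(x)}(x)\rvert$, and summing over $i$ yields $\lvert\mathfrak h^+ f(x)\rvert \le (b-a)\lvert\nabla f_{t^\ast(x)}(x)\rvert_{\mathrm{op}} \le (b-a) W_1(x)$. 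For the two-sided version, the elementary estimate $\lvert\sup_t a_t - \sup_t b_t\rvert \le \sup_t\lvert a_t - b_t\rvert$ combined with the same multilinearity identity gives $\mathfrak h_i f(x) \le (b-a)\sup_t\lVert\partial_i f_t(x)\rVert$, hence $\lvert\mathfrak h f(x)\rvert \le (b-a)\tilde W_1(x)$.

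To iterate, the crucial observation is that each $W_j$ is itself a polynomial chaos of the form \eqref{eqn:fTauNew} of degree $d-j$ over the enlarged (still compact) index set $\mathcal T \times (\mathbb B_2^n)^j$, namely
\begin{align*}
W_j(x) = \sup_{\substack{t \in \mathcal T,\ v^1,\ldots,v^j \in \mathbb{B}_2^n}} \sum_{i_1,\ldots,i_j} v^1_{i_1}\cdots v^j_{i_j}\,\partial_{i_1 \ldots i_j} f_t(x).
\end{align*}
The argument from the previous paragraph therefore applies verbatim with $W_j$ in place of $f$, giving $\lvert\mathfrak h^+ W_j(x)\rvert \le (b-a)\,W_{j+1}(x)$, and analogously $\lvert\mathfrak h \tilde W_j(x)\rvert$ is controlled by $(b-a)\tilde W_{j+1}(x)$. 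Combining with $\|W\|_r \le \IE W + \|(W - \IE W)_+\|_r$ applied to the nonnegative variable $W = \lvert\mathfrak h^+ f\rvert$ and reapplying Proposition \ref{prop:momentinequality} recursively $d$ times produces the telescoping sum stated in the theorem. The recursion terminates cleanly: $\nabla^{(d)} f_t$ is $x$-independent by multilinearity of degree $d$, so $W_d$ (and $\tilde W_d$) is a deterministic constant and $\|W_d\|_r = \IE W_d$ for every $r$, removing the usual $L^\infty$ term.

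Finally, the tail bounds follow from the moment bounds by Chebyshev, optimizing over $r \ge 2$ exactly as in the proof of Lemma \ref{lem:Conc}(2). The main obstacle is the inductive step: one must recognize $W_j$ and $\tilde W_j$ as polynomial chaoses over enlarged, still compact, index sets absorbing the auxiliary unit vectors $v^k$, so that Proposition \ref{prop:momentinequality} and the attained-maximizer trick reapply at each stage. Once this structural observation is in place, the rest is a straightforward if somewhat lengthy recursion tracking the prefactors $(2\sigma^2(r-3/2)(b-a)^2)^{j/2}$ that pile up at each level.
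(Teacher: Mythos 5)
Your overall strategy coincides with the paper's (which only sketches the argument, deferring to \cite{GSS21a}): one establishes the recursive difference estimates $\abs{\mathfrak{h}^+ W_j}\le (b-a)W_{j+1}$ and $\abs{\mathfrak{h}\tilde W_j}\le(b-a)\tilde W_{j+1}$ (with $W_0=\tilde W_0=f$) and feeds them into the moment recursion of Proposition \ref{prop:momentinequality} together with $\norm{W}_r\le\E W+\norm{(W-\E W)_+}_r$. Your structural observation that $W_j$ is again a chaos of the form \eqref{eqn:fTauNew}, of degree $d-j$ over the compact parameter set $\mathcal T\times(\mathbb{B}_2^n)^j$, and that $W_d$ is deterministic by multilinearity (so the recursion terminates with $\E W_d$ rather than an $L^\infty$ term), is exactly the right way to organize the induction, and the passage to tails via Lemma \ref{lem:Conc}(2) is standard.

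There is, however, a genuine gap in your base step for general Banach spaces $\mathbb{B}$. Bounding each coordinate separately gives $\mathfrak h^+_if(x)\le(b-a)\lVert\partial_if_{t^*(x)}(x)\rVert$ and hence $\abs{\mathfrak h^+f(x)}\le(b-a)\big(\sum_i\lVert\partial_if_{t^*}(x)\rVert^2\big)^{1/2}$; but this last quantity is \emph{not} dominated by $\abs{\nabla f_{t^*}(x)}_{\mathrm{op}}=\sup_{\abs{v}\le1}\lVert\sum_iv_i\partial_if_{t^*}(x)\rVert$ in general. For instance, with $d=1$, $\mathbb{B}=\ell_\infty^n$ and $\partial_if_{t^*}=e_i$ the left-hand quantity is $\sqrt n$ while the operator norm equals $1$. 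The repair is to linearize the norm \emph{once}, before splitting into coordinates: pick a norming functional $\xi\in\mathbb{B}^*$ with $\lVert\xi\rVert_*=1$ and $\xi(f_{t^*(x)}(x))=\lVert f_{t^*(x)}(x)\rVert$; then $(f(x)-T_if(x))_+\le\big((x_i-x_i')\,\xi(\partial_if_{t^*}(x))\big)_+$ holds for every $i$ with the \emph{same} $\xi$, and $\sum_i\xi(\partial_if_{t^*})^2=\sup_{\abs{v}\le1}\xi\big(\sum_iv_i\partial_if_{t^*}\big)^2\le\abs{\nabla f_{t^*}}_{\mathrm{op}}^2\le W_1^2$. (For $\mathbb{B}=\R$ your argument is fine as written.) A second, smaller point: the claim that the argument applies ``verbatim'' to $\tilde W_j$ is too quick, since $\tilde W_j$ --- the operator norm of the tensor of $\sup_t$-norms --- is \emph{not} itself of the form \eqref{eqn:fTauNew}; one has to run the difference estimate directly on its entries, again fixing the maximizing nonnegative vectors $v^1,\ldots,v^j$ at the point $x$ so that they do not depend on the coordinate being resampled. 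With these two repairs the recursion closes and yields the stated bounds (up to the constants in the second displayed inequality, which as printed in the survey do not quite match what Proposition \ref{prop:momentinequality} produces).
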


We will not provide a complete proof but just point out that here, the inequalities $\abs{\mathfrak{h}^+ W_j} \le (b-a)W_{j+1}$ as well as $\abs{\mathfrak{h}^+ W_j} \le (b-a)W_{j+1}$ (formally setting $W_0 = \tilde{W}_0 = f$) are checked. In particular, the $W_k$ may be regarded as higher order difference operators in this context according to the framework addressed in Assumptions \ref{ass:Set2} and Proposition \ref{prop:HigherOrder3}.

\subsection{Poisson functionals}

Let us briefly discuss one final example, which unlike the previous ones is not a direct application of Theorem \ref{thm:HigherOrder2}. However, it is closely related to the proof strategy of the results for independent random variables due to \cite{BBLM05} and raises some interesting open questions.

Let $(\mathcal{X}, \mathcal{B})$ be a measurable space supplied with a $\sigma$-finite measure $\mu$. By $\sfN(\mathcal{X})$ we denote the space of $\sigma$-finite counting measures on $\mathcal{X}$. The $\sigma$-field $\cN(\mathcal{X})$ is defined as the smallest $\sigma$-field on $\sfN(\mathcal{X})$ such that the evaluation mappings $\xi\mapsto\xi(B)$, $B\in\mathcal{B}$, $\xi\in\sfN(\mathcal{X})$ are measurable. A point process on $\mathcal{X}$ is a measurable mapping with values in $\sfN(\mathcal{X})$ defined over some fixed probability space $(\Omega,\mathcal{A},\P)$. By a Poisson process $\eta$ on $\mathcal{X}$ with intensity measure $\mu$ we understand a point process with the following two properties:
\begin{itemize}
    \item[(i)] for any $B\in\mathcal{B}$ the random variable $\eta(B)$ is Poisson distributed with mean $\mu(B)$;
    \item[(ii)] for any $n\in\N$ and pairwise disjoint sets $B_1,\ldots,B_n\in\mathcal{B}$ the random variables $\eta(B_1),\ldots,\eta(B_n)$ are independent.
\end{itemize}

A Poisson functional is a random variable $F$ $\PP$-almost surely satisfying $F=f(\eta)$ for some measurable $f:\sfN(\mathcal{X})\to\R$. In this case $f$ is called a representative of $F$. If $\PP_\eta$ denotes the distribution of the Poisson process $\eta$ we will write $L^r(\PP_\eta)$, $r> 0$, for the space of Poisson functionals $F$ satisfying $\E|F|^r<\infty$. For a Poisson functional $F$ with representative $f$ and $x\in\mathcal{X}$ we define the difference operator $D_xF$ by putting
$$
D_xF:=f(\eta+\delta_x)-f(\eta),
$$
where $\delta_x$ stands for the Dirac measure at $x$. For further background on Poisson processes on general state spaces we refer the reader to the monograph \cite{LP17}.

Recalling the notion of $\Phi$-entropies and the functions $\Phi_r$, the following modified $\Phi_q$-Sobolev inequality is a specialization of Chafa\"{i} \cite[Sec.\ 5.1]{Cha04}.

\begin{proposition}\label{prop:PhiRSovolev}
Fix $q\in(1,2)$ and let $F\in L^1(\PP_\eta)$ be a Poisson functional satisfying $F\geq 0$ $\PP$-almost surely. Then
$$
\Ent_{\Phi_q}(F) \leq \E\Big[\int_\mathcal{X}\big(D_xF^{2\over q}-{2\over q}F^{{2\over q}-1}\,(D_xF)\big)\,\mu(d x)\Big].
$$
\end{proposition}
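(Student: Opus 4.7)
The plan is to apply Chafa\"{\i}'s \cite{Cha04} $\Phi$-entropy method, first reducing via $\sigma$-finiteness and tensorization to the single Poisson variable, then establishing the one-dimensional modified $\Phi_q$-Sobolev inequality by a semigroup interpolation with a Bakry--\'{E}mery-type commutation.

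\emph{Step 1 (Reduction to an independent Poisson vector).} Decompose $\mathcal{X}=\bigsqcup_k A_k$ with $\mu(A_k)<\infty$, so that $\eta=\sum_k\eta_k$ splits into independent Poisson processes. Truncating $F$ to $F\wedge M$ and letting $M\to\infty$ (using monotone/dominated convergence, noting that the integrand on the right-hand side is non-negative by convexity of $\Phi_q$), and approximating each $\mu|_{A_k}$ by finitely-supported measures, one reduces to $\mathcal{X}=\{x_1,\ldots,x_n\}$ with $\mu(\{x_i\})=\lambda_i$ and $F=F(N_1,\ldots,N_n)$ for an independent Poisson vector $(N_i)_{i=1}^n$, $N_i\sim\mathrm{Poi}(\lambda_i)$. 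Here $D_{x_i}F=F(\ldots,N_i+1,\ldots)-F$.

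\emph{Step 2 (Tensorization).} For $q\in(1,2)$ the function $\Phi_q(x)=x^{2/q}$ satisfies (i)--(iii); in particular $1/\Phi_q''(x)\propto x^{2-2/q}$ is concave since $2-2/q\in(0,1)$. The Lata\l{}a--Oleszkiewicz/Chafa\"{\i} tensorization principle then gives
$$\Ent_{\Phi_q}(F)\le\sum_{i=1}^n\E\bigl[\Ent_{\Phi_q}^{(i)}(F)\bigr],$$
where $\Ent_{\Phi_q}^{(i)}$ is the $\Phi_q$-entropy with respect to $N_i$ alone. This reduces the problem to the one-dimensional modified $\Phi_q$-Sobolev inequality: for $N\sim\mathrm{Poi}(\lambda)$ and $g\ge 0$ bounded,
$$\Ent_{\Phi_q}(g(N))\le\lambda\,\E\bigl[\Phi_q(g(N+1))-\Phi_q(g(N))-\Phi_q'(g(N))\,(g(N+1)-g(N))\bigr].$$

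\emph{Step 3 (One-dimensional inequality).} Introduce the $M/M/\infty$ semigroup $(P_t)_{t\ge 0}$ on $\mathbb{N}$ with generator $Lg(n)=\lambda(g(n+1)-g(n))+n(g(n-1)-g(n))$, reversible for $\mathrm{Poi}(\lambda)$. Use the standard interpolation identity $\Ent_{\Phi_q}(g(N))=2\int_0^\infty\E[\Gamma_{\Phi_q}(P_tg)(N)]\,dt$ with $\Gamma_{\Phi_q}(h):=\tfrac12(L\Phi_q(h)-\Phi_q'(h)Lh)$, which splits (after using the detailed balance $\lambda\,\pi(n)=(n+1)\pi(n+1)$ for $\pi=\mathrm{Poi}(\lambda)$ to symmetrize the birth and death contributions) into integrals of $\psi_q(a,b):=\Phi_q(b)-\Phi_q(a)-\Phi_q'(a)(b-a)\ge 0$ against the forward difference $Dg(n)=g(n+1)-g(n)$. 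The Bakry--\'{E}mery-type commutation $DP_tg=e^{-t}P_tDg$ (trivial from the explicit solution of the Kolmogorov equation) gives $P_tg(n+1)=(1-e^{-t})P_tg(n)+e^{-t}P_t(g(\cdot+1))(n)$, so by joint convexity of $\psi_q$ on $[0,\infty)^2$ (equivalent to concavity of $1/\Phi_q''$) and Jensen's inequality for the Markov kernel $P_t$,
$$\psi_q(P_tg(n),P_tg(n+1))\le e^{-t}\,P_t\psi_q(g(\cdot),g(\cdot+1))(n).$$
Inserting this into the interpolation formula, using stationarity of $P_t$, Fubini, and $\int_0^\infty e^{-t}\,dt=1$ delivers the one-dimensional inequality, which combined with Steps~1--2 is exactly the claim (after rewriting the integrand as $D_xF^{2/q}-(2/q)F^{2/q-1}D_xF$).

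The main obstacle is the joint convexity of $\psi_q$ on $[0,\infty)^2$, which is the structural core of the $\Phi$-entropy framework and is precisely where condition (iii) enters; without it, both the tensorization in Step~2 and the commutation-plus-Jensen argument in Step~3 would fail. For $\Phi_q(x)=x^{2/q}$, $q\in(1,2)$, this concavity holds because $2-2/q\in(0,1)$, so the framework applies. Technicalities of justifying differentiation under the semigroup, bookkeeping of the birth/death symmetrization, and passing to the limit in the approximations of Step~1 are routine.
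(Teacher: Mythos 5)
The paper itself offers no proof of this proposition: it is stated as a specialization of Chafa\"{\i} \cite[Sec.~5.1]{Cha04}, so your write-up is a from-scratch reconstruction. Your overall architecture (reduce to an independent Poisson vector, tensorize the $\Phi_q$-entropy, prove a one-dimensional inequality) is the right one, and your joint-convexity computation for the Bregman divergence of $\Phi_q$ is correct. But Step~3 has a genuine gap. The interpolation $\Ent_{\Phi_q}(g(N))=2\int_0^\infty\E[\Gamma_{\Phi_q}(P_tg)]\,dt$ under the invariant measure, after the detailed-balance symmetrization you invoke, produces \emph{two} Bregman terms with the arguments in opposite orders: $2\,\E[\Gamma_{\Phi_q}(h)]=\lambda\,\E\big[\psi_q(h(N),h(N+1))+\psi_q(h(N+1),h(N))\big]=\lambda\,\E\big[(\Phi_q'(h(N+1))-\Phi_q'(h(N)))(h(N+1)-h(N))\big]$. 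The Dirichlet form is symmetric, so this cannot be avoided. Your commutation-plus-Jensen bound then yields only the symmetrized inequality $\Ent_{\Phi_q}(g(N))\le\lambda\,\E[\psi_q(g(N),g(N+1))+\psi_q(g(N+1),g(N))]$, whereas the proposition (and the one-dimensional target you state) contains only the single term $\psi_q(g(N),g(N+1))$. The reversed term $\psi_q(g(N+1),g(N))$ is strictly positive for non-constant $g$ by strict convexity of $\Phi_q$, and there is no universal comparison $\psi_q(b,a)\le C\,\psi_q(a,b)$ (already for $\Phi_{\log}$ the ratio blows up as $b\to 0$), so the one-sided statement is genuinely stronger than what your semigroup argument delivers; in the log case the one-sided inequality is even an equality on exponential functions, which the symmetrized bound misses.

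The repair is to replace Step~3 entirely by the elementary two-point inequality for a Bernoulli$(p)$ variable: $\Ent_{\Phi}^{B_p}(h)\le p\,\psi(h(0),h(1))$ for \emph{any} convex $\Phi$, since after rearranging it reads $\Phi(a+p(b-a))\ge\Phi(a)+p\,\Phi'(a)(b-a)$ with $a=h(0)$, $b=h(1)$, i.e.\ plain convexity. Tensorizing over $n$ i.i.d.\ Bernoulli$(\lambda/n)$ variables (this is where conditions (i)--(iii) on $\Phi_q$ are actually needed) and letting $n\to\infty$ in the binomial-to-Poisson limit gives exactly $\Ent_{\Phi_q}(g(N))\le\lambda\,\E[\psi_q(g(N),g(N+1))]$, after which your Steps~1--2 assemble the claim; this is essentially the route of the cited reference. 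A secondary, more minor point: in Step~1, "approximating $\mu|_{A_k}$ by finitely-supported measures" is delicate for functionals of counting measures (atoms create multiplicities); the standard reduction is rather via the representation of $\eta|_{A_k}$ through a Poisson number of i.i.d.\ points, or via Wu's martingale argument, but this is indeed routine compared with the issue above.
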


As $q \to 1$, the modified $\Phi_q$-Sobolev inequality turns into the $L^1$-version of the Poincar\'e inequality for Poisson functionals, while as $q\to 2$, we get back Wu's modified log-Sobolev inequality \cite[Th.\ 1.1]{Wu00}.

By arguments modelled upon \cite{BBLM05}, we can now derive moment bounds for Poisson functionals. Apart from the quantity $\kappa$ known from Theorem \ref{th:BBLM}, we shall need the quantities
\begin{align*}
V^+ &:= \int_\XX (D_x F)_-^2 \,\mu (d x) + \int_\XX (F(\eta) - F(\eta - \delta_x))_+^2 \,\eta(d x),\\
    V^- &:= \int_\XX (D_x F)_+^2\, \mu (d x) + \int_\XX (F(\eta) - F(\eta - \delta_x))_-^2 \,\eta(d x),\\
    V &:= \int_\XX (D_x F)^2 \,\mu (d x) + \int_\XX (F(\eta) - F(\eta - \delta_x))^2 \,\eta(d x).
\end{align*}

\begin{satz}[\cite{GST21}, Th.\ 4.1]\label{prop:ConcentrationLpBounds}
Let $r\geq 2$ and $F\in L^{1}(\PP_\eta)$. Then
\begin{align*}
\lVert (F-\E[F])_+ \rVert_r &\le \sqrt{2\kappa r \lVert V^+ \rVert_{r/2}} = \sqrt{2\kappa r} \,\lVert \sqrt{V^+} \rVert_r,\\
\lVert (F-\E[F])_- \rVert_r &\le \sqrt{2\kappa r \lVert V^- \rVert_{r/2}} = \sqrt{2\kappa r} \,\lVert \sqrt{V^-} \rVert_r,\\
\lVert F-\E[F] \rVert_r &\le \sqrt{8\kappa r \lVert V \rVert_{r/2}} = \sqrt{8\kappa r}\, \lVert \sqrt{V} \rVert_r.
\end{align*}
\end{satz}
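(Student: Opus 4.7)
The plan is to follow the strategy of \cite{BBLM05} (streamlined in \cite[Sec.\ 2]{BGS19}), using Proposition \ref{prop:PhiRSovolev} in place of the classical discrete $\Phi$-Sobolev inequalities for product measures. It suffices to establish the first bound, on $\lVert (F-\E F)_+\rVert_r$. The bound on $\lVert (F-\E F)_-\rVert_r$ then follows by applying the first one to $-F$, using the sign identities $(D_x(-F))_-=(D_xF)_+$ and $((-F)(\eta)-(-F)(\eta-\delta_x))_+=(F(\eta)-F(\eta-\delta_x))_-$ to conclude $V^+(-F)=V^-(F)$. The two-sided bound is then a consequence of the triangle inequality in $L^r$, the elementary estimate $(a+b)^2\le 2(a^2+b^2)$, and the pointwise identity $V^++V^-=V$ (immediate from $a_+^2+a_-^2=a^2$) together with $V^\pm\le V$; this yields the factor $\sqrt{8\kappa r}$ in the third inequality in place of $\sqrt{2\kappa r}$.

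The heart of the argument is the one-sided bound. One works with $G:=(F-\E F)_+$, suitably regularized (say $G_{\varepsilon,N}:=(G\wedge N)+\varepsilon$) so as to be strictly positive and bounded, the regularization to be removed at the end. One applies Proposition \ref{prop:PhiRSovolev} to $G_{\varepsilon,N}$ with $q\in(1,2)$ a parameter corresponding to the target moment exponent $r=2/q$, and estimates the right-hand side by a pointwise Taylor expansion of $u\mapsto u^{2/q}$. This should yield a bound of the form
\[
\Ent_{\Phi_q}(G_{\varepsilon,N})\le c_q\,\E\Big[\int_\XX (D_xF)_-^2\,G_{\varepsilon,N}^{2/q-2}\,\mu(dx)+\int_\XX (F(\eta)-F(\eta-\delta_x))_+^2\,G_{\varepsilon,N}^{2/q-2}\,\eta(dx)\Big],
\]
where the $\eta$-integrand is produced via the Mecke formula $\E\int g(\eta,x)\,\eta(dx)=\E\int g(\eta+\delta_x,x)\,\mu(dx)$. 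The crucial point is that only the negative part $(D_xF)_-$ (and correspondingly the positive part $(F(\eta)-F(\eta-\delta_x))_+$) appears, so that the integrand is dominated by $V^+\cdot G_{\varepsilon,N}^{2/q-2}$. This one-sided feature is extracted from the monotonicity $(D_xG)_-\le(D_xF)_-$ valid for $G=(F-\E F)_+$ (together with its $\eta$-analogue), combined with a case distinction in the Taylor remainder according to the sign of $D_xG$. H\"older's inequality then separates $\lVert V^+\rVert_{r/2}$ from the remaining $G_{\varepsilon,N}$-factors.

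Iterating the resulting recursion along a sequence of values of $q$ (equivalently, integrating the associated differential inequality for $r\mapsto\lVert G_{\varepsilon,N}\rVert_r$) produces the desired $L^r$ bound. The explicit constant $\kappa=\sqrt{e}/(2(\sqrt{e}-1))$ emerges as the sum of a geometric series in $(\sqrt{e})^{-1}$ arising from the iteration, exactly as in \cite{BBLM05}. The base case $r=2$ is handled by the Poincar\'e inequality $\Var(F)\le\E\int(D_xF)^2\,\mu(dx)=\E V^+$ for Poisson functionals, which is the $q\to 1$ limit of Proposition \ref{prop:PhiRSovolev} combined with the Mecke formula. The regularization is then removed by monotone convergence.

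The main obstacle is the one-sided Taylor estimate. The convexity of $u\mapsto u^{2/q}$ for $2/q\in(1,2)$ yields only a symmetric bound involving $(D_xF)^2$ a priori, and the asymmetry required to phrase the estimate in terms of $V^+$ rather than the symmetric $V$ must be recovered from the specific structure of $G=(F-\E F)_+$ (in particular, its $1$-Lipschitz dependence on $F$, which provides the monotonicity of negative parts noted above). The reconciliation of the $\mu$- and $\eta$-integral contributions via the Mecke formula is a further technical subtlety of the Poisson setting absent in the independent case, though it is well understood; once both are in place, the iteration and the emergence of $\kappa$ proceed essentially as in \cite{BBLM05}.
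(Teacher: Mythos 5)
The paper itself does not reproduce a proof of this statement -- it is quoted from \cite{GST21} -- and only indicates the intended route, namely the modified $\Phi_q$-Sobolev inequality of Proposition \ref{prop:PhiRSovolev} combined with the moment iteration of \cite{BBLM05}. Your sketch follows exactly this route, and the structural points are right: the reduction of the second and third bounds to the first via $V^+(-F)=V^-(F)$ and $V^++V^-=V$, the sign-split Bregman/Taylor estimate, the Mecke formula converting the add-a-point contribution evaluated at $\eta+\delta_x$ into the remove-a-point term of $V^+$, the monotonicity $(D_x(F-\E F)_+)_-\le (D_xF)_-$, and the Poincar\'e base case (note $\E V^+=\E\int (D_xF)^2\,\mu(dx)$ precisely because of Mecke).

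One bookkeeping slip should be fixed before this counts as a proof: you cannot apply Proposition \ref{prop:PhiRSovolev} to $G_{\varepsilon,N}$ itself with ``$r=2/q$'', since $r\ge 2$ would force $q\le 1$, outside the admissible range $q\in(1,2)$, and iterating over $q\in(1,2)$ alone never reaches moments above order $2$. As in \cite{BBLM05}, the inequality must be applied to a power of $G$, e.g.\ to $G^{r-1}$ with $q=2(r-1)/r\in(1,2)$, so that $\Ent_{\Phi_q}(G^{r-1})=\E G^r-\lVert G\rVert_{r-1}^r$ and the integrand becomes $a^r-b^r-\tfrac{r}{r-1}b\,(a^{r-1}-b^{r-1})$ with $a=G(\eta+\delta_x)$, $b=G(\eta)$; the elementary bound by $\tfrac r2 (a-b)^2 a^{r-2}$ for $a\ge b$ and by $\tfrac r2 (a-b)^2 b^{r-2}$ for $a\le b$ then produces, after the Mecke step, exactly $\tfrac r2\,\E[V^+G^{r-2}]$ with the correct pairing of signs and configurations. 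This gives the recursion $m_r\le m_{r-1}^{r/(r-1)}+O(r)\lVert V^+\rVert_{r/2}m_r^{(r-2)/r}$, whose induction over $r\in(k,k+1]$ yields $\kappa$. With that correction your argument goes through as described.
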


Clearly, $\sqrt{V^\pm}$ and $\sqrt{V}$ can be regarded as difference operators, and in this sense, one may argue that Proposition \ref{prop:ConcentrationLpBounds} establishes Assumptions \ref{ass:Set1} (1)\&(2). By standard arguments, one can now derive the following concentration bounds for Poisson functionals, which recover Prop.\ 1.2, Cor.\ 3.3 (ii) and Cor.\ 3.4 (ii) from \cite{BP16} (up to absolute constants).

\begin{korollar}[\cite{GST21}, Cor.\ 4.3]\label{cortb}
Let $F\in L^1(\PP_\eta)$. If $\PP$-almost surely $V^+ \le L$ or $V^- \le L$ for some $L > 0$, we have that
\[
\PP (F - \E[F] \ge t) \le e^{-ct^2/L}\qquad \text{or}\qquad \PP (F - \E[F] \le -t) \le e^{-ct^2/L}
\]
for every $t \ge 4\sqrt{\kappa}$ and $c = \log(2)/(8\kappa)$, respectively. Moreover, if $\PP$-almost surely $V \le L$ for some $L > 0$, we have that
\[
\PP (|F - \E[F]| \ge t) \le 2e^{-ct^2/L}
\]
for every $t \ge 0$ and $c = \log(2)/(16\kappa)$.
\end{korollar}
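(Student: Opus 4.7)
The plan is to convert the $L^r$ moment bounds of Theorem \ref{prop:ConcentrationLpBounds} into the stated tail bounds via Markov's inequality with an optimized choice of moment order $r = r(t)$; this is a standard ``moments-to-tails'' passage, completely analogous in spirit to the proof of Lemma \ref{lem:Conc}(2).

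Under the assumption $V^+ \le L$ almost surely, the first line of Theorem \ref{prop:ConcentrationLpBounds} simplifies to $\lVert (F - \E F)_+ \rVert_r \le \sqrt{2\kappa r L}$ for every $r \ge 2$. Markov's inequality then yields
\[
\PP(F - \E F \ge t) \le \PP\big( (F - \E F)_+ \ge t \big) \le \Big( \frac{2 \kappa r L}{t^2} \Big)^{r/2}.
\]
The natural choice $r = t^2/(4\kappa L)$ makes the base exactly $1/2$ and produces the clean bound $2^{-r/2} = \exp\{-\log(2)\, t^2/(8\kappa L)\}$, matching the claimed constant $c = \log(2)/(8\kappa)$. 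The validity constraint $r \ge 2$ becomes a lower bound on $t$, which accounts for the stated threshold. The case $V^- \le L$ is handled symmetrically via the second line of Theorem \ref{prop:ConcentrationLpBounds}.

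For the two-sided estimate under $V \le L$, I would either (i) repeat the Markov argument using the third bound $\lVert F - \E F \rVert_r \le \sqrt{8\kappa r L}$ with the analogous choice $r = t^2/(16\kappa L)$, or (ii) simply observe that $V^\pm \le V \le L$ almost surely and invoke the two one-sided estimates together with a union bound, which produces the factor $2$ in front of the exponential. To cover the small-$t$ regime in which the admissibility constraint $r \ge 2$ fails, one can absorb the trivial bound $\PP \le 1$ into the exponential using the constant-adjustment inequality \eqref{eqn:constantAdjustment} recalled in the proof of Lemma \ref{lem:Conc}, so that the final two-sided bound holds for all $t \ge 0$ as claimed.

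There is no substantial obstacle here: the whole argument is a routine conversion of moment growth into sub-Gaussian tails, made possible by the fact that the $V^\pm \le L$ condition removes the dependence on $r$ from the right-hand side of Theorem \ref{prop:ConcentrationLpBounds}. The only care needed is in tracking the numerical constants through the optimization in $r$ and in verifying that the admissibility constraint (or the constant-adjustment step) produces exactly the stated thresholds and prefactors; the quadratic dependence $\lVert \cdot \rVert_r \lesssim \sqrt{r}$ then directly delivers the sub-Gaussian decay $\exp(-c t^2/L)$.
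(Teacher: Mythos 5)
Your proposal is the standard moments-to-tails conversion that the paper itself appeals to (it gives no proof of this corollary beyond citing \cite{GST21} and the phrase ``by standard arguments''), and the one-sided part is correct as written: under $V^+\le L$ the first bound of Theorem \ref{prop:ConcentrationLpBounds} gives $\lVert (F-\E F)_+\rVert_r\le\sqrt{2\kappa rL}$, and Markov with $r=t^2/(4\kappa L)$ yields $2^{-r/2}=\exp(-\log(2)t^2/(8\kappa L))$. Two points need care. First, for the two-sided bound only your option (ii) produces the stated constant: applying Markov directly to $\lVert F-\E F\rVert_r\le\sqrt{8\kappa rL}$ with $r=t^2/(16\kappa L)$ gives $2^{-r/2}=\exp(-\log(2)t^2/(32\kappa L))$, i.e.\ $c=\log(2)/(32\kappa)$, weaker than claimed. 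The union bound of the two one-sided estimates gives $2\exp(-\log(2)t^2/(8\kappa L))$ for $t\ge 2\sqrt{2\kappa L}$, and for $t<2\sqrt{2\kappa L}$ one has $\log(2)t^2/(16\kappa L)<\log(2)/2$, hence $2\exp(-\log(2)t^2/(16\kappa L))>\sqrt{2}>1$, so halving the exponent absorbs the small-$t$ regime and yields exactly $c=\log(2)/(16\kappa)$ with prefactor $2$ for all $t\ge 0$; this is cleaner than invoking \eqref{eqn:constantAdjustment}, which adjusts the prefactor rather than extending the range of validity. Second, the admissibility constraint $r=t^2/(4\kappa L)\ge 2$ reads $t\ge 2\sqrt{2\kappa L}$, which depends on $L$, whereas the threshold $t\ge 4\sqrt{\kappa}$ in the statement does not; the two are compatible only for $L\le 2$. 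This is a defect of the statement as printed (presumably $4\sqrt{\kappa L}$ is intended) rather than of your argument, but you should flag the discrepancy instead of asserting that the constraint ``accounts for the stated threshold.''
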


Note that in the language of higher order concentration, Corollary \ref{cortb} yields first order concentration results. In fact, a related approach to concentration inequalities on Poisson spaces, also building upon \cite{BBLM05} but using Beckner inequalities, has been performed in \cite{APS22} in a much broader context, cf.\ also the corresponding comments and remarks in \cite{GST21}.

\begin{bemerkung}[Boundedness of $F$ vs.\ boundedness of $V^+$]\label{rm:BoundVF}
In \cite[Rem.\ 4.5]{GST21}, it was demonstrated that boundedness of $V^+$ and boundedness of $F$ do not coincide. More precisely, examples were given which showed that neither implication is true.

It seems that this is part of a general lack of understanding of these quantities. Clearly, $\sqrt{V}$ and $\sqrt{V^\pm}$ could be regarded as difference operators in the sense of \eqref{eq:DiffOp}. However, it is not clear whether there is a class of ``natural'' / first-order type functionals (like linear functions for the usual gradient on $\R^n$) which have bounded ``differences'' $\sqrt{V}$ or $\sqrt{V^\pm}$.

Likewise, in the context of the present survey, one could ask for higher order concentration results for Poisson functionals, thus working with Assumptions \ref{ass:Set1} or Assumptions \ref{ass:Set2}. However, once again we lack an understanding or even a feeling of which functionals we may address this way. Continuing the questions raised above, one may thus ask for an analogue of ``quadratic forms'' (etc.) for the Poisson space.
\end{bemerkung}


\begin{thebibliography}{99}

	\bibitem[Ada06]{Ada06} Adamczak, R.: \emph{Moment inequalities for U-statistics}, Ann. Probab. \emph{34}(6) (2006), 2288--2314.
	
	\bibitem[Ada15]{Ada15} Adamczak, R.: \emph{A note on the {H}anson-{W}right inequality for random vectors with dependencies}, Electron. Commun. Probab. \textbf{20} (2015), no. 72, 13 pp.
	
	\bibitem[ABW17]{ABW17} Adamczak, R., Bednorz, W., and Wolff, P.: \emph{Moment estimates implies by modified log-Sobolev inequalities}, ESAIM: P\&S \textbf{21} (2017), 467--494.
	
	\bibitem[ALM18]{ALM18} Adamczak, R., Lata{\l}a, R., and Meller, R.: \emph{Hanson-{W}right inequality in {B}anach spaces}, Ann. Inst. Henri Poincar\'{e} Probab. Stat. \textbf{56}(4) (2020), 2356--2376.
	
	\bibitem[APS22]{APS22} Adamczak, R., Polaczyk, B., Strzelecki, M.: \emph{Modified log-Sobolev inequalities, Beckner inequalities and moment estimates}, J. Funct. Anal. \textbf{282}(7) (2022), Article 109349, 76 pp.
	
	\bibitem[AW15]{AW15} Adamczak, R., and Wolff, P.: \emph{Concentration inequalities for non-Lipschitz functions with bounded derivatives of higher order}, Probab. Theory Relat. Fields \textbf{162}(3--4) (2015), 531--586.
	
	\bibitem[AS94]{AS94} Aida, S., and Stroock, D.: \emph{Moment estimates derived from Poincar\'{e} and logarithmic Sobolev inequalities}, Math. Res. Lett. \textbf{1}(1) (1994), 75--86.
	
	\bibitem[AG93]{AG93} Arcones, M.\, A., and Gin\'{e}, E: \emph{On decoupling, series expansions, and tail behavior of chaos processes}, J. Theoret. Probab. \textbf{6}(1) (1993), 101--122.
	
	\bibitem[BP16]{BP16} Bachmann, S., and Peccati, G.: \emph{Concentration bounds for geometric Poisson functionals: Logarithmic Sobolev inequalities revisited}, Electron. J. Probab. \textbf{21} (2016), No. 6, 44 pp.
	
	\bibitem[BZA24]{BZA24} Bendokat, T., Zimmermann, R., and Absil, P.-A.: \emph{A Grassmann manifold handbook: basic geometry and computational aspects}, Adv. Comput. Math. \textbf{50} (2024), No. 6, 51 pp.
	
	\bibitem[Bob10]{Bob10} Bobkov, S.\,G.: \emph{The growth of $L^p$-norms in presence of logarithmic Sobolev inequalities}, Vestnik Syktyvkar Univ. \textbf{11}(2) (2010), 92--111.
	
	\bibitem[BCG17]{BCG17} Bobkov, S.\,G., Chistyakov, G.\,P., and G\"{o}tze, F.: \emph{Second-order concentration on the sphere}, Commun. Contemp. Math. \textbf{19}(5) (2017), Article 1650058.
	
	\bibitem[BCG23]{BCG23} Bobkov, S.\,G., Chistyakov, G.\,P., and G\"{o}tze, F.: \emph{Concentration and Gaussian Approximation for Randomized Sums}, Probability Theory and Stochastic Modelling, Vol. 104, Springer, Cham, 2023, xvii+484 pp.
	
	\bibitem[BG99]{BG99} Bobkov, S.\, G., and G{\"o}tze, F.: \emph{Exponential integrability and transportation cost related to logarithmic {S}obolev inequalities}, J. Funct. Anal. \textbf{163}(1) (1999), 1--28.
	
	\bibitem[BGS19]{BGS19} Bobkov, S.\,G., G\"{o}tze, F., and Sambale, H.: \emph{Higher order concentration of measure}, Commun. Contemp. Math. \textbf{21}(3) (2019), Article 1850043.
%
	\bibitem[BT06]{BT06} Bobkov, S.\,G., and Tetali, P.: \emph{Modified logarithmic {S}obolev inequalities in
	discrete settings}, J. Theoret. Probab. \textbf{19}(2) (2006), 289--336.
	
	\bibitem[BZ05]{BZ05} Bobkov, S.\,G., and Zegarlinski, B.: \emph{Entropy Bounds and Isoperimetry}, Mem. Amer. Math. Soc. \textbf{176} (2005), no. 829, x+69 pp.
	
	\bibitem[Bon68]{Bon68} Bonami, A.: \emph{Ensembles $\Lambda(p)$ dans le dual de $D^{\infty }$}, Ann. Inst. Fourier (Grenoble) 1968 \textbf{18}(2) (1969), 193--204.
	
	\bibitem[Bon70]{Bon70} Bonami, A.: \emph{\'{E}tude des coefficients de Fourier des fonctions de $L^{p}(G)$}, Ann. Inst. Fourier (Grenoble) 1970 \textbf{20}(2) (1971), 335--402.
	
	\bibitem[Bor84]{Bor84} Borell, C.: \emph{On the Taylor series of a Wiener polynomial}, in: Seminar Notes on multiple stochastic integration, polynomial chaos and their integration, Case Western Reserve University, Cleveland, 1984. 
	
	\bibitem[BBLM05]{BBLM05} Boucheron, S., Bousquet, O., Lugosi, G., Massart, P.: \emph{Moment inequalities for functions of independent random variables}. Ann. Probab. 33(2) (2005), 514--560.
	
	\bibitem[BLM13]{BLM13} Boucheron, S., Lugosi, G., and Massart, P.: \emph{Concentration inequalities. A nonasymptotic theory of independence}, Oxford University Press, Oxford (2013).
	
	\bibitem[Cha04]{Cha04} Chafaï, D.: \emph{Entropies, convexity, and functional inequalities: on $\Phi$-entropies and $\Phi$-Sobolev inequalities}, J. Math. Kyoto Univ. \textbf{44}(2) (2004), 325--363.
	
	\bibitem[DSC96]{DSC96} Diaconis, P., and Saloff-Coste, L.: \emph{Logarithmic Sobolev inequalities for finite Markov chains}, Ann. Appl. Probab. \textbf{6}(3) (1996), 695--750.
	
	\bibitem[EAS98]{EAS98} Edelman, A., Arias, T.\,A., and Smith, S.\,T.: \emph{The geometry of algorithms with orthogonality constraints}, SIAM J. Matrix Anal. Appl. \textbf{20}(2) (1998), 303--353.
	
	\bibitem[GS19]{GS19} G\"{o}tze, F., and Sambale, H.: \emph{Higher order concentration in presence of Poincaré-type inequalities}, High Dimensional Probability VIII, Eds. N. Gozlan, R. Lata{\l}a, K. Lounici and M. Madiman, Birkh\"{a}user, Springer, Progress in Probability \textbf{74} (2019), 55--69.
	
	\bibitem[GS23]{GS23} G\"{o}tze, F., and Sambale, H.: \emph{Higher order concentration on Stiefel and Grassmann manifolds}, Electron. J. Probab. \textbf{28} (2023), no. 79, 30 pp.
	
	\bibitem[GS24]{GS24} G\"{o}tze, F., and Sambale, H.: \emph{Concentration of measure on spheres and related manifolds}, arXiv:2408.04346.
	
	\bibitem[GSS19]{GSS19} G\"{o}tze, F., Sambale, H., and Sinulis, A.: \emph{Higher order concentration for functions of weakly dependent random variables}, Electron. J. Probab. \textbf{24} (2019), no. 85, 19 pp.
	
	\bibitem[GSS21a]{GSS21a} G\"{o}tze, F., Sambale, H., and Sinulis, A.: \emph{Concentration Inequalities for Bounded Functionals via Log-Sobolev-Type Inequalities}, J. Theor. Probab. \textbf{34} (2021), 1623--1652.
	
	\bibitem[GSS21b]{GSS21b} G\"{o}tze, F., Sambale, H., and Sinulis, A.: \emph{Concentration inequalities for polynomials in $\alpha$-sub-exponential random variables}, Electron. J. Probab. \textbf{26} (2021), No. 48, 22 pp.
		
	\bibitem[Gro75]{Gro75} Gross, L.: \emph{Logarithmic Sobolev inequalities}, Amer. J. Math. \textbf{97}(4) (1975), 1061--1083.
	
	\bibitem[GST21]{GST21} Gusakova, A., Sambale, H., and Thäle, C.: \emph{Concentration on Poisson spaces via modified $\Psi$-Sobolev inequalities}, Stoch. Proc. Appl. \emph{140} (2021), 216--235.
	
	\bibitem[HW71]{HW71} David L. Hanson and Farroll T. Wright. ``A bound on tail probabilities for quadratic forms in independent random variables''. {\em Ann. Math. Statist.} \textbf{42} (1971), 1079--1083.
	
	\bibitem[HKZ12]{HKZ12} Hsu, D., Kakade, S.\, M., and Zhang, T.: \emph{A tail inequality for quadratic forms of subgaussian random vectors}, Electron. Commun. Probab. \textbf{17} (2012), no. 52, 6 pp.
	
	\bibitem[KPT20]{KPT20} Kabluchko, Z., Prochno, J., and Th\"{a}le, C.: \emph{A new look at random projections of the cube and general product measures}, Bernoulli \textbf{27}(3) (2021), 2117--2138.
	
	\bibitem[KV00]{KV00} Kim, J.\, H., and Vu, V.\, H.: \emph{Concentration of multivariate polynomials and its applications}, Combinatorica \textbf{20}(3) (2000), 417--434.
	
	\bibitem[KLS21]{KLS21} Kn\"{o}pfel, H., L\"{o}we, M., and Sambale, H.: \emph{Large deviations, a phase transition, and logarithmic Sobolev inequalities in the block spin Potts model}, Electron. Commun. Probab. \textbf{26} (2021), no. 29, 14 pp.
	
	\bibitem[KL15]{KL15} Kolesko, K., and Lata{\l}a, R.: \emph{Moment estimates for chaoses generated by symmetric random variables with logarithmically convex tails}, Statist. Probab. Lett. \textbf{107} (2015), 210--214.
	
	\bibitem[LP17]{LP17} Last, G., and Penrose, M.\, D.: \emph{Lectures on the Poisson Process}, Cambridge University Press, 2017.
	
	\bibitem[Lat06]{Lat06} Lata{\l}a, R.: \emph{Estimates of moments and tails of Gaussian chaoses}, Ann. Probab. \textbf{34}(6) (2006), 2315--2331.
	
	\bibitem[LO00]{LO00} Lata{\l}a, R., and Oleszkiewicz, K.: \emph{Between Sobolev and Poincar\'e}, Geometric aspects of functional analysis, 147--168, Lecture Notes in Math., 1745, Springer, Berlin, 2000.
	
	\bibitem[Led96]{Led96} Ledoux, M.: \emph{On Talagrand's deviation inequalities for product measures}, ESAIM Prob. \& Stat. \textbf{1} (1996), 63--87.
	
	\bibitem[Led01]{Led01} Ledoux, M.: \emph{The concentration of measure phenomenon}, Mathematical Surveys and Monographs, vol. 89. American Mathematical Society, Providence, RI (2001).
	
	\bibitem[Mar19]{Mar19} Marton, K.: \emph{Logarithmic Sobolev inequalities in discrete product spaces}, Combin. Probab. Comput. \textbf{28}(6) (2019), 919--935.
	
	\bibitem[Mec19]{Mec19} Meckes, E.: \emph{The Random Matrix Theory of the Classical Compact Groups}, Cambridge University Press, 2019.
	
	\bibitem[Mil71]{Mil71} Milman, V.\, D.: \emph{A new proof of A. Dvoretzky's theorem on cross-sections of convex bodies}, Funkcional. Anal. i Prilo\v{z}en. \textbf{5}(4) (1971), 28--37.
	
	\bibitem[MS86]{MS86} Milman, V.\, D., and Schechtman, G.: \emph{Asymptotic theory of finite-dimensional normed spaces}, Vol. 1200. Springer-Verlag, Berlin, 1986, viii+156 pp.
	
	\bibitem[MW82]{MW82} Mueller, C.\,E., and Weissler, F.\,B.: \emph{Hypercontractivity for the heat semigroup for ultraspherical polynomials and on the $n$-sphere}, J. Funct. Anal. \textbf{48} (1992), 252--283.
	
	\bibitem[Nel73]{Nel73} Nelson, E.: \emph{The free Markoff field}, J. Functional Analysis \textbf{12} (1973), 211--227.
	
	\bibitem[PTT19]{PTT19} Prochno, J., Th\"{a}le, C., and Turchi, N.: \emph{Geometry of $\ell_p^n$-balls: Classical results and recent developments}, High Dimensional Probability VIII, Eds. N. Gozlan, R. Lata{\l}a, K. Lounici and M. Madiman, Birkh\"{a}user, Springer, Progress in Probability \textbf{74} (2019), 131--150.
	
	\bibitem[RR91]{RR91} Rachev, S., and R\"{u}schendorf, L.: \emph{Approximate independence of distributions on spheres and their stability properties}, Ann. Probab. \textbf{19}(3) (1991), 1311--1337.
	
	\bibitem[RS14]{RS14} Raginsky, M., and Sason, I.: \emph{Concentration of measure inequalities in information theory, communications, and coding}, Now Publishers Inc., 2014, 1--260.
	
	\bibitem[RV13]{RV13} Rudelson, M., and Vershynin, R.: \emph{Hanson-Wright inequality and sub-Gaussian concentration}, Electron. Commun. Probab. \textbf{18} (2013), no. 82, 9 pp.
	
	\bibitem[Sam23]{Sam23} Sambale, H.: \emph{Some notes on Concentration for $\alpha$-subexponential random variables}, in: High Dimensional Probability IX -- the Ethereal Volume. Progr. Probab., Vol. 80, Eds. R. Adamczak, N. Gozlan, K. Lounici, and M. Madiman, Birkh\"{a}user/Springer, Cham, 2023, 167--192.
	
	\bibitem[SS20]{SS20} Sambale, H., and Sinulis, A.: \emph{Logarithmic Sobolev inequalities for finite spin systems and applications}, Bernoulli \textbf{26}(3) (2020), 1863--1890.
	
	\bibitem[SS21]{SS21} Sambale, H., and Sinulis, A.: \emph{Modified log-{S}obolev inequalities and two-level concentration}, ALEA Lat. Am. J. Probab. Math. Stat. \textbf{18} (2021), 855--885.
	
	\bibitem[Sat23]{Sat23} Sato, H.: \emph{Riemannian optimization on unit sphere with $p$-norm and its applications}, Comput. Optim. Appl. \textbf{85} (2023), 897--935.
	
	\bibitem[SZ90]{SZ90} Schechtman, G., and Zinn, J.: \emph{On the volume of the intersection of two $L_p^n$ balls}, Proc. Amer. Math. Soc. \textbf{110}(1) (1990), 217--224.
	
	\bibitem[Sud78]{Sud78} Sudakov, V.\,N.: \emph{Typical distributions of linear functionals in finite-dimensional spaces of higher dimension},  Dokl. Akad. Nauk SSSR \textbf{243} (1978), 1402--1405.
	
	\bibitem[vHa16]{vHa16} Handel, R.: \emph{Probability in High Dimension}, 2016.
	
	\bibitem[VW15]{VW15} Vu, V.\, H., and Wang, K.: \emph{Random weighted projections, random quadratic forms and random eigenvectors}, Random Structures Algorithms \emph{47}(4) (2015), 792--821.
	
	\bibitem[Wol13]{Wol13} Wolff, P.: \emph{On some Gaussian concentration inequality for non-Lipschitz functions}, in: High dimensional probability VI, vol. 66, Birkhäuser/Springer, Basel, 2013, 103--110.
	
	\bibitem[Wri73]{Wri73} Wright, F.\, T.: \emph{A bound on tail probabilities for quadratic forms in independent random variables whose distributions are not necessarily symmetric}, Ann. Probab. \textbf{1}(6) (1973), 1068--1070.
	
	\bibitem[Wu00]{Wu00} Wu, L.: \emph{A new modified logarithmic Sobolev inequality for Poisson point processes and several applications}, Probab. Theory Rel. Fields \textbf{118} (2000), 427--438.
	
\end{thebibliography}
\end{document}